\newtheorem{theorem}{Theorem}[section]
\newtheorem{lemma}[theorem]{Lemma}
\newtheorem{prop}[theorem]{Proposition}
\newtheorem{proposition}[theorem]{Proposition}
\newtheorem{corollary}[theorem]{Corollary}
\newtheorem{cor}[theorem]{Corollary}
\newtheorem{conjecture}[theorem]{{Conjecture}}
\newtheorem{example}[theorem]{{Example}}
\newtheorem{problem}[theorem]{{Problem}}
\newtheorem{definition}[theorem]{{Definition}}
\newtheorem{claim}[theorem]{{Claim}}
\theoremstyle{remark}
\newtheorem{remark}[theorem]{Remark}
\def\bclaim{\begin{claim}}
\def\eclaim{\end{claim}}
\def\bdefin{\begin{definition}}
\def\edefin{\end{definition}}
\def\bcor{\begin{corollary}}
\def\ecor{\end{corollary}}
\def\bthm{\begin{theorem}}
\def\ethm{\end{theorem}}
\def\bconj{\begin{conjecture}}
\def\econj{\end{conjecture}}
\def\blem{\begin{lemma}}
\def\elem{\end{lemma}}
\def\blemma{\begin{lemma}}
\def\elemma{\end{lemma}}
\def\bprop{\begin{prop}}
\def\eprop{\end{prop}}
\def\bremark{\begin{remark}}
\def\eremark{\end{remark}}
\def\bhyp{\begin{hypothesis}}
\def\ehyp{\end{hypothesis}}
\def\bnot{\begin{notation}}
\def\enot{\end{notation}}
\def\bexample{\begin{example}}
\def\eexample{\end{example}}
\def\lb{\label}
 \def\be{\beta}
\def\K{K\"ahler } 
\def\KE{K\"ahler--Einstein } 
\def\KEE{K\"ahler--Einstein edge }
\def\h#1{\hbox{#1}}
\def\strutdepth{\dp\strutbox}
\def\specialstar{\vtop to \strutdepth{
    \baselineskip\strutdepth
    \vss\llap{$\star$\ \ \ \ \ \ \ \ \  }\null}}
\def\marginalstar{\strut\vadjust{\kern-\strutdepth\specialstar}}
\def\marginal#1{\strut\vadjust{\kern-\strutdepth
    {\vtop to \strutdepth{
    \baselineskip\strutdepth
    \vss\llap{{ \small #1 }}\null}
    }}
    }
\def\text{\textstyle}
\def\q{\quad} \def\qq{\qquad}
\def\ra{\rightarrow}
\def\sm{\setminus}
\newcommand{\PP}{{\mathbb P}} \newcommand{\RR}{\mathbb{R}}
 \newcommand{\NN}{{\mathbb N}}
\newcommand{\FF}{{\mathbb F}}
\def\beq{\begin{equation}}
\def\eeq{\end{equation}}
\def\bpf{\begin{proof}}
\def\epf{\end{proof}}
\def\eaeq{\end{aligned}}
\def\baeq{\begin{aligned}}
\def\aldp{asymptotically log del Pezzo }
\def\bi{\bibitem}
\makeatletter\@addtoreset{equation}{section} \makeatother
\date{}
\title{On flops and canonical metrics}
\author{Ivan A. Cheltsov and Yanir A. Rubinstein}
\address{\begin{tabbing}
\hspace*{28 em}\=\kill
University of Edinburgh 
\texttt{I.Cheltsov@ed.ac.uk}
\\
\\
University of Maryland 
\texttt{yanir@umd.edu}
\end{tabbing}}
\begin{document}

\maketitle

\begin{abstract}
This article is concerned with an observation for proving
non-existence of canonical K\"ahler metrics. The idea is to use a
rather explicit type of degeneration that applies in many
situations. Namely, in a variation on a theme introduced by
Ross--Thomas, we consider flops of the deformation to the normal
cone. This yields a rather widely applicable notion of stability
that is still completely explicit and readily computable, but with
wider scope. We describe some applications, among them, a proof
of one direction of the Calabi conjecture for
asymptotically logarithmic del Pezzo surfaces.

\end{abstract}

\pagestyle{headings}

\section{Motivation and results}
\label{section:intro}

A variety is slope stable in the sense of Ross--Thomas 
if, roughly, it is K-stable with respect to
degenerations to the normal cone of its subvarieties.
This notion has been studied extensively by a number of authors
and has yielded many non-existence results for canonical metrics
on projective \K manifolds. Our main purpose in this article
is to introduce a slight variation on this theme by considering
a somewhat more involved notion of stability that involves
additional flops on the degeneration to the normal cone but that
is still geometric and computable, and is partly inspired 
by the work of Li--Xu.
This gives many new non-existence results, and most notably
allows us to resolve one direction of the Calabi conjecture for
asymptotically logarithmic del Pezzo surfaces.

\subsection{Existence theorem for KEE metrics}

\KEE (KEE) metrics are a natural generalization of \KE metrics: they are smooth metrics on the complement of a divisor, and have a conical singularity of angle $2\pi\be$
transverse to that `complex edge'. When $\be=1$, of course, this is just
an ordinary \KE metric, that extends
smoothly across the divisor.
One can think of the metric as being `bent` at an angle $2\pi\be$ along the divisor.
 In the case of Riemann surfaces, KEE metrics are just the familiar constant curvature metrics with isolated cone singularities, that have been studied since the late 19th century, e.g., by Picard \cite{Picard1893}.

A basic question, whose origins trace 
back to Tian's 1994 lectures in the setting of nonpositive
curvature \cite{Tian1994},
extended in Donaldson's 2009 lectures to the setting
of anticanonical divisors on Fano manifolds \cite{Don2009}, and further extended
in our previous work \cite{CR} (see also the survey \cite[\S8]{R14}), 
is the following:

\begin{problem}{\it
Under what analytic conditions on the triple
$(X,D,\beta)$ does a KEE metric exist on
the \K manifold $X$ bent at an angle $2\pi\be$ along the divisor $D\subset X$?
}
\end{problem}
This is partly motivated by Troyanov's solution in the Riemann surface case
\cite{Troyanov},
and was settled
by Jeffres--Mazzeo--Rubinstein (sufficient condition)
and Darvas--Rubinstein (sufficient and necessary conditions) 
in higher dimensions for smooth $D$ \cite{JMR,DR}.
These results give an analytic criterion characterizing 
existence, once the cohomological condition
\beq\label{cohomEq}
-K_X-(1-\be)D \h{\ \ is\ $\mu$ times an ample class, for some $\mu\in\RR$},
\eeq
is satisfied.
\bremark
The analytic condition of \cite[Theorem~9.1]{DR} is optimal
and in particular improves on that
of \cite[Theorem~2]{JMR} in the presence of automorphisms.
An alternative proof of the sufficient condition of \cite{JMR} 
was later also given by Guenancia--Paun \cite{GuenanciaP},
who treated the more general case of a simple normal crossing (snc) $D$,
based on work of Berman et al. \cite{BBEGZ}; we
refer to the survey \cite{R14} for a thorough discussion and many more references.
\eremark

\subsection{
Angle increasing to $2\pi$}

The existence theorem of \cite{JMR} coupled with Berman's
work \cite{Berman} showed that KEE metrics always exist for $(X,D,\be)$
when $X$ is a Fano manifold admitting a smooth anticanonical divisor $D$
and $\be$ is small \cite[Corollary 1]{JMR}.

Following these results,
considerable amount of work about KEE metrics in recent years has
concerned the behavior of such
 metrics when the cone angle \emph{increases towards $2\pi$},
 the two main issues
 being to show
that when $X$ is Fano admitting a smooth anticanonical divisor $D$, then:
\begin{itemize}
\item [(a)] $X$ admits KEE metrics with angle $2\pi\beta$ along a
smooth anticanonical divisor for all angles $\beta<1$ sufficiently
close to $1$ iff $X$ is K-semistable;

\item[(b)] the limit of these KEE metrics as $\be$ tends to 1 is a smooth KE metric iff
$X$ is K-stable.
\end{itemize}
Problems (a)-(b) attracted a good deal of work building on combined
efforts of many researchers in the past two
decades, culminating in a solution \cite{CDS,T12}.

\subsection{Angle decreasing to $0$}

In \cite{CR}, we initiated a systematic study of the behavior in
the other extreme when the \emph{cone angle $\beta$ goes to zero}. In
partial analogy with the previous paragraph, the program initiated
in \cite{CR} concerns:
\begin{itemize}
\item [(a)] Determining all triples $(X,D,\be)$ satisfying
\eqref{cohomEq} with \emph{sufficiently small $\beta$};
\item [(b)] Obtaining a condition
equivalent to existence of KEE metrics for such triples;
\item[(c)] Understanding the limit, when such exists, of these KEE metrics as $\be$
tends to zero.
\end{itemize}

This program is largely open. In \cite{CR} we established
(a) in dimension two under a technical assumption that the
pair $(X,D)$ is strongly asymptotically log Fano (see Definition 
\ref{definition:log-Fano};  this is
satisfied, e.g., when $D$ is smooth), and made
some initial progress towards (b).
One of our goals in the present article is
to establish one direction of the equivalence
in part (b) in dimension two.

To make the notion of `sufficiently small $\be$' more precise,
we introduce some terminology.
Consider a pair $(X,D)$ where $D=\sum_{i=1}^mD_i$ is a snc divisor.
Denote
\beq\label{AmplebetaEq}
\h{Amp}(X,D):=\{\be\in\RR^m_+\,:\, -K_X-\sum_{i=1}^m(1-\beta_i)D_i
\h{ \ is ample}\}.
\eeq

\begin{definition} {\rm \cite[Definition 1.1]{CR}}
\label{definition:log-Fano}{\it
We say $(X,D)$  is {asymptotically log Fano} (ALF) if
$0\in \overline{\h{\rm Amp}(X,D)}$, and
{strongly asymptotically log Fano} if
$\h{\rm Amp}(X,D)$ contains a punctured neighborhood of $0$ in $\RR^m_+\sm\{0\}$.
}
\end{definition}

When $m=1$ these two notions coincide.
Understanding which pairs $(X,D)$ admit a KEE metric with a small
angle along $D$ requires understanding the class of ALF varieties.
Recall that by Kawamata--Shokurov's Basepoint-free Theorem 
if $(X,D)$ is ALF, then $|a(K_{X}+D)|$ (for some $a\in\NN$) is 
free from base points and gives a morphism
$$
\eta\colon X\to Z,
$$
so that $Z$ is a point if and only if $D\sim -K_{X}$
\cite[Theorem 2.1]{Shokurov} (see also \cite[Theorem 1.9]{CR}), since
$\mathrm{Pic}(X)$ has no torsion 
\cite[Proposition 2.1.2]{IsPr99}.
The following conjecture, posed in our earlier work, gives a rather complete picture
concerning (b)--(c) when $D$ is smooth:

\begin{conjecture}{\rm\cite[Conjecture 1.11]{CR}}
\label{conjecture:big} Suppose that $(X,D)$ is asymptotically log
Fano manifold with $D$ smooth and irreducible.
\begin{itemize}
\item[(i)] If $\eta$ is
birational, there exist no KEE metrics for sufficiently small $\beta$.

\item[(ii)] If $\eta$ is not birational, then there exist KEE metrics
$\omega_{\beta}$ with angle $2\pi\beta$ along $D$ for all
sufficiently small $\beta>0$. Moreover, as $\beta$ tends to zero
$(X,D,\omega_\beta)$ converges in an appropriate sense to a
generalized KE metric $\omega_\infty$ on $X\setminus D$ that is
Calabi--Yau along generic fibers of $\eta$.
\end{itemize}
\end{conjecture}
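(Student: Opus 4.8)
Since the displayed statement is a conjecture whose existence half (part (ii)) rests on the full conical analytic theory, I will concentrate on the non-existence half, part (i), in dimension two, which is the statement this paper is positioned to establish. The guiding principle is the ``easy'' direction of the conical Yau--Tian--Donaldson correspondence: if $(X,D,\beta)$ carries a KEE metric with respect to the polarization $L_\beta:=-K_X-(1-\beta)D$ (ample by \eqref{cohomEq} for small $\beta$ once $(X,D)$ is strongly ALF), then the log Donaldson--Futaki invariant of every special test configuration for $\big(X,(1-\beta)D\big)$ is nonnegative. Consequently, to prove that no KEE metric exists for all sufficiently small $\beta>0$, it suffices to produce, for each such $\beta$, a single test configuration whose log Donaldson--Futaki invariant is \emph{negative}.

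The plan is to manufacture this destabilizer out of the birational geometry of $\eta$. Since $\eta$ is birational and defined by a multiple of $K_X+D$, the class $-(K_X+D)$ is nef and big but not ample, so it is trivial on the $\eta$-exceptional curves; fix one such curve $C$, with $\big(-(K_X+D)\big)\cdot C=0$ and $C^2<0$ (the exceptional locus of a birational morphism of surfaces carries a negative-definite intersection form). Following the Ross--Thomas recipe I would first form the deformation to the normal cone of $C$, blowing up $C\times\{0\}\subset X\times\CC$ and polarizing by the pullback of $L_\beta$ twisted by a small multiple of the exceptional divisor. This is a test configuration for $(X,L_\beta)$, but, exactly as the introduction to this paper emphasizes, its central fibre need not be of the special slope-theoretic form; so, in the spirit of Li--Xu, I would run the relative minimal model program on the total space, which here reduces to an explicit flop, to replace it by a special test configuration $(\mathcal{X},\mathcal{L}_\beta)$ with controlled central fibre on which the invariants can be evaluated.

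The heart of the argument is the sign of the log Donaldson--Futaki invariant of $(\mathcal{X},\mathcal{L}_\beta)$. In dimension two this is an explicit rational function $\mathrm{DF}(\beta)$ of $\beta$, built from the intersection data $L_\beta^2$, $L_\beta\cdot C$, $C^2$, $K_X\cdot C$ and $D\cdot C$, together with the Seshadri-type threshold $\epsilon=\epsilon(C,L_\beta)$ and the bookkeeping correction introduced by the flop. The decisive feature is that $L_\beta\cdot C=\big(-(K_X+D)+\beta D\big)\cdot C=\beta\,(D\cdot C)$, so the polarization degenerates linearly along the contracted curve as $\beta\to 0$ while $C^2<0$ stays fixed. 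I would compute the leading term of $\mathrm{DF}(\beta)$ as $\beta\to 0^+$ and show it is strictly negative: in this regime the log slope of $C$ drops below the log slope of $X$, because the positive contributions of $L_\beta$ are suppressed on the $(-(K_X+D))$-trivial curve while the fixed negative geometry of $C$ persists. Verifying that a curve $C$ with the requisite numerics is available in \emph{every} birational case requires invoking the classification of strongly asymptotically log del Pezzo surfaces with birational $\eta$ from \cite{CR}, reducing the sign check to a finite list.

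The main obstacle is twofold. First, one must confirm that the flop yields a genuine normal, $\QQ$-Gorenstein special test configuration and must track precisely how the flop modifies the weight and the Donaldson--Futaki computation; this is the technical core of the ``flopped deformation to the normal cone'' and is exactly what separates the argument from a bare Ross--Thomas slope calculation, in which the naive deformation to the normal cone of $C$ frequently fails to destabilize. Second, the negativity of the leading coefficient of $\mathrm{DF}(\beta)$ must be established uniformly across the finitely many birational configurations, which is where the two-dimensional classification is indispensable. I expect the higher-dimensional analogue of part (i) to remain out of reach precisely because no such classification is available and the relevant flop geometry is far less controlled.
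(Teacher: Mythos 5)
Your framing is correct in outline (Berman's Theorem~\ref{theorem:Berman}, a destabilizer built from deformation to the normal cone plus a flop, and an appeal to the classification of \cite{CR}), but the central computational step fails because you destabilize at the wrong center. You blow up an $\eta$-exceptional curve $Z$ (so $(K_X+D).Z=0$ and $Z^2<0$) and assert that the leading term of the Futaki invariant as $\beta\to0^+$ is negative because $L_\beta.Z=\beta\,(D.Z)\to0$ while ``the fixed negative geometry of $Z$ persists.'' The paper's own formula (Proposition~\ref{proposition:formula}, case $Z\ne C$) shows the opposite sign: $F=(6c-3c^2)L_\beta.Z+(2c^3-3c^2)Z^2$, so with $L_\beta.Z=O(\beta)$ and $Z^2=-1$ the limit is $3c^2-2c^3$, which is strictly \emph{positive} for every $c\in(0,3/2)$ --- the negative self-intersection of the contracted curve enters with the wrong sign in the admissible range. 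Pushing $c$ past $3/2$ is not available: the relevant Seshadri thresholds are of size $1+O(\beta)$ in the model cases (compare Example~\ref{example:F1-rational}, where the $(-1)$-curve destabilizes only for $\beta>\sqrt{3}-1$, and the Panov--Ross result quoted in \S\ref{ExampSec} that fibers never slope-destabilize $\FF_1$), and you exhibit no flop that would enlarge the window for this center --- the curves obstructing ampleness there are not $(-1,-1)$-curves in general, so no simple flop applies. The paper instead centers the degeneration at the \emph{boundary} curve itself, $Z'=C'$: then $\mathcal{D}$'s transform contributes $-(1-\beta)E_Z$, so the positive term $6\beta c\,L_\beta.C$ carries a factor of $\beta$, and the $\beta\to0$ limit is $-3c^2L_0.C+2c^3C^2\le-2c^2<0$ after rewriting via the ample class $L_0-cC$, exactly as in Proposition~\ref{theorem:Maeda} and \eqref{FutakiLongEq}.

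Your MMP step also mischaracterizes the role of the flop. The actual obstruction in the big-but-not-ample case is $\epsilon(S',C',L'_\beta)\le\beta$, so the ordinary slope configuration is admissible only for $c<\beta$, where $F>0$. The paper flops the proper transforms $C_1,\dots,C_r$ of the $(-1)$-curves through the blown-up points on the boundary; these are $(-1,-1)$-curves in $\mathcal{X}'$ (Lemma~\ref{lemma:normal-bundle}) and are precisely the curves on which $\mathcal{L}'_{\beta,c}$ becomes negative once $\beta<c<\epsilon(S,C,L_\beta)$ (Lemma~\ref{lemma:flop-slope-Seshadri-0}). The flop restores relative ampleness in this larger window (Lemma~\ref{lemma:flop-slop-ample}); it does \emph{not} produce a normal or ``special'' central fiber in the Li--Xu sense --- the central fiber of $\hat{\mathcal{X}}'$ is still reducible, and the paper explicitly avoids invoking Li--Xu's theorems, using only the elementary intersection correction of Lemma~\ref{lemma:flops-intersections}, which yields the explicit term $+2r(c-\beta)^3$ in Proposition~\ref{proposition:Futaki-flop-slop} (using $\mathcal{L}'_{\beta,c}.C_i=\beta-c$ and $\mathcal{D}'.C_i=0$), lower order in $\beta$ and harmless to the $-2\gamma^2$ limit. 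Finally, \cite[Theorem 1.4]{CR} is used structurally --- every big-but-not-ample pair is a blow-up at $r>0$ points on the boundary of a pair with $-K_S-C$ ample --- giving one uniform computation, not a finite list of case-by-case sign checks as you suggest.
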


This conjecture suggests that the existence problem for KEE
metrics in the small angle regime boils down to computing a single
intersection number! Namely, checking whether
$$
(K_X+D)^n=0.
$$
This is a rather far-reaching simplification
as compared to checking the much harder condition of
K-stability. Indeed, the easier direction of the
Yau--Tian--Donaldson conjecture implies that a KEE metric exists
only if the pair $(X,D)$ is log K-stable \cite{Berman}.
However, even in
dimension two, it is a very difficult
problem to check (log) K-stability as it involves, in theory,
computing the Futaki invariant of an infinite number of
test configurations.

\subsection{Flop-slope stability and non-existence}

When $n=2$, Conjecture \ref{conjecture:big} (i) amounts to:

\begin{conjecture}{\rm \cite[Conjecture~1.6]{CR}}
\label{conjecture:main} Let $S$ be a smooth surface, and let $C$
be a smooth  irreducible curve on $S$. Suppose that $(S,C)$ is asymptotically
log del Pezzo. Then $S$ admits KEE metrics
with angle $\beta$ along $C$ for all sufficiently small $\beta$
 only if $(K_{S}+C)^2=0$.
\end{conjecture}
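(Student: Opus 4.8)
The plan is to argue by contraposition and reduce to an instability statement. By the easier direction of the logarithmic Yau--Tian--Donaldson correspondence recalled above \cite{Berman}, existence of a KEE metric for $(S,C,\beta)$ forces the pair to be log K-semistable with respect to the polarization $L_\beta:=-K_S-(1-\beta)C$. It therefore suffices to show that if $(K_S+C)^2\neq 0$ then $(S,C)$ admits, for every sufficiently small $\beta>0$, a test configuration of negative log Donaldson--Futaki invariant. First I would pin down the numerical geometry: asymptotic log del Pezzo-ness makes $L_\beta$ ample for small $\beta>0$, so $-(K_S+C)=\lim_{\beta\to0}L_\beta$ is nef and $(K_S+C)^2\geq 0$ automatically; hence the hypothesis reads $(K_S+C)^2>0$, the class $-(K_S+C)$ is big and nef, and the morphism $\eta\colon S\to Z$ of the excerpt is birational. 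Choosing an irreducible $\eta$-exceptional curve $E$ we have $(K_S+C)\cdot E=0$ and $E^2<0$, and the single numerical fact driving everything is
\[
L_\beta\cdot E=-(K_S+C)\cdot E+\beta\,(C\cdot E)=\beta\,(C\cdot E)\xrightarrow[\beta\to0]{}0 .
\]

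With this in hand I would build the degeneration. Form the deformation to the normal cone of the boundary, $\pi\colon\mathcal{X}=\mathrm{Bl}_{C\times\{0\}}(S\times\CC)\to\CC$, a normal threefold carrying the standard $\CC^\ast$-action, with its exceptional divisor and the proper transform $\mathcal{C}$ of $C\times\CC$; polarize by $\pi^\ast L_\beta$ twisted by a small multiple of the exceptional divisor. The point is that the $\eta$-exceptional curves $E$, sitting inside the copy of $S$ in the central fibre, satisfy $(K_{\mathcal{X}}+\mathcal{C})\cdot E=0$ while $E^2<0$ in their fibre: they are exactly flopping curves of the log threefold. Following the variation on Ross--Thomas described above, I would flop these curves to produce a new test configuration $\mathcal{X}'\to\CC$ with the same general fibre $(S,C)$ but a modified, still normal, central fibre. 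This is the ``flop of the deformation to the normal cone,'' and it is what makes the construction destabilizing in cases where the unmodified normal-cone degeneration is slope-inconclusive.

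The remaining step is the intersection-theoretic computation of the log Donaldson--Futaki invariant $\mathrm{DF}_\beta(\mathcal{X}')$. Expanding the relevant weight and Hilbert polynomials in the normal-cone parameter and in $\beta$, I expect the flop to contribute exactly the $E$-terms, whose size is governed by $L_\beta\cdot E=\beta(C\cdot E)\to0$, so that the normal-cone degeneration which is slope-neutral at $\beta=0$ is tipped to negative Futaki invariant for small $\beta>0$. Since the mere existence of the flopping curve $E$ is equivalent to $(K_S+C)^2>0$, and one expects the leading destabilizing coefficient to be a negative multiple of $(K_S+C)^2=(-(K_S+C))^2$ after summation over the exceptional locus via the projection formula for $\eta$, we obtain $\mathrm{DF}_\beta(\mathcal{X}')<0$ for all small $\beta>0$. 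By Berman's obstruction this excludes KEE metrics and proves the contrapositive.

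The hard part will be this last computation carried out on the flopped threefold: one must track how the flop alters the weight polynomial, verify that the twisted class $\pi^\ast L_\beta$ minus a small multiple of the exceptional divisor (and its flopped transform) stays relatively ample over an allowable window of the normal-cone parameter uniformly as $\beta\to0$, and confirm that the new central fibre is normal so that $\mathcal{X}'$ is an admissible configuration. Correctly bookkeeping the boundary contribution of $\mathcal{C}$ to the log-Futaki invariant and isolating the exact coefficient of $(K_S+C)^2$ is where the real work lies; the rest reduces to standard intersection theory on surfaces.
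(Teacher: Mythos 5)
Your overall strategy is the paper's: reduce via Berman's obstruction to exhibiting a destabilizing test configuration, take the deformation to the normal cone of $C$, flop curves in the central fiber, and compute the Futaki invariant as $\beta\to 0$. But there are two genuine gaps. First, the case analysis is missing, and your heuristic for when the flop is needed is backwards. The hypothesis $(K_S+C)^2\neq 0$ together with ALF makes $-K_S-C$ big and nef, but it does \emph{not} force $\eta$ to have exceptional curves: when $-K_S-C$ is ample, $\eta$ is an isomorphism, your construction produces no flopping curves at all, and yet nonexistence must still be proved. In that case the \emph{unmodified} normal-cone degeneration with $Z=C$ already destabilizes (Proposition \ref{theorem:Maeda}) -- so it is not ``slope-neutral at $\beta=0$'' as you claim. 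Conversely, in the remaining case the unmodified configuration is slope \emph{stable}: the Seshadri constant collapses, $\epsilon(S',C',L'_\beta)\le\beta$, so the admissible window $c\in(0,\beta)$ shrinks with $\beta$ and the Futaki invariant stays positive there. The true function of the flop is not to ``tip'' a neutral configuration by adding $E$-terms but to \emph{enlarge the admissible range of $c$ past the Seshadri constant}: after flopping, $\hat{\mathcal{L}}'_{\beta,c}$ is $\hat{p}'$-ample for all $\beta<c<\epsilon(S,C,L_\beta)$ computed on the ample model (Lemma \ref{lemma:flop-slop-ample}), and it is evaluating at a fixed $c=\gamma$ in this extended window, uniformly as $\beta\to0$, that yields $\lim_{\beta\to0^+}\mathrm{F}\le -2\gamma^2<0$. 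Note also that the limiting coefficient is $-2\gamma^2$ with $\gamma$ below $\epsilon(S,C,-K_S-C)$, not a multiple of $(K_S+C)^2$; the sole role of $(K_S+C)^2\neq0$ is to trigger the dichotomy above.

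Second, you assert that the $\eta$-exceptional curves are ``exactly flopping curves'' merely from $(K_{\mathcal X}+\mathcal C).E=0$ and $E^2<0$, but a simple (Atiyah) flop requires the normal bundle in the threefold to be $\mathcal{O}_{\mathbb{P}^1}(-1)\oplus\mathcal{O}_{\mathbb{P}^1}(-1)$, and one must also verify that the flopped total space is projective and the result is still a test configuration. Both points require knowing the precise structure of the exceptional curves, which your proposal never establishes. This is where the classification \cite[Theorem 1.4]{CR} enters: bigness of $-K_S-C$ plus ALF forces $(S,C)$ to be the blowup of a pair with ample $-K_s-c$ at $r$ distinct points of $c$, so the $\eta$-exceptional curves are $(-1)$-curves meeting $C$ transversally at one point -- exactly the input to Lemma \ref{lemma:normal-bundle} -- and projectivity of $\hat{\mathcal{X}}'$ is obtained by realizing it as an honest blowup $\pi_{\tilde L}$ of $\mathcal{X}$ along the curves $\tilde L_i$, not by abstract MMP. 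Finally, the two items you defer as ``the hard part'' -- the relative-ampleness window and the correction to the Futaki invariant under the flop -- are the actual mathematical content: the correction is computed by Proposition \ref{proposition:Futaki-flop-slop} via the triple-intersection formula for simple flops (Lemma \ref{lemma:flops-intersections}), and in the case at hand $\mathcal{L}'_{\beta,c}.C_i=\beta-c$ while $\mathcal{D}'.C_i=0$, so the flop contributes exactly $+2r(c-\beta)^3$; without these computations the proposal does not close.
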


Our main result is a verification
of Conjecture~\ref{conjecture:main}.

\begin{theorem}
\label{MainThm}
 Let $S$ be a smooth surface, and let $C$ be a
smooth irreducible curve on $S$. Suppose that $(S,C)$ is asymptotically log
del Pezzo and $(K_{S}+C)^2\not=0$. Then $S$ does not admit
 KEE metrics with angle $\beta$ along $C$ for
all sufficiently small $\beta$. Moreover, this statement holds
for all $\be\in \h{\rm Amp}(M,D)$ for which \eqref{FutakiLongEq} is negative.
\end{theorem}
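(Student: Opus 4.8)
The plan is to argue by contradiction, using the easy direction of the log Yau--Tian--Donaldson correspondence as the only analytic input. By Berman's theorem recalled in the introduction, if $S$ carries a KEE metric with angle $2\pi\beta$ along $C$, then $(S,C)$ is log K-stable at that angle, and in particular the log Donaldson--Futaki invariant of \emph{every} test configuration for the pair $(S,C,\beta)$ is nonnegative. It therefore suffices to produce a single test configuration whose log Futaki invariant, given by the explicit formula \eqref{FutakiLongEq}, is negative. The ``Moreover'' clause is then built into the method: non-existence follows verbatim for any $\beta\in\h{\rm Amp}(M,D)$ making \eqref{FutakiLongEq} negative, and the main assertion is the special case obtained by showing that this quantity is negative for all sufficiently small $\beta$.

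To construct the destabilizing configuration I would use the hypothesis $(K_S+C)^2\neq0$. Since $(S,C)$ is ALF, the class $-(K_S+C)$ is nef, so $(K_S+C)^2=(-(K_S+C))^2\geq0$ with strict inequality here; thus $-(K_S+C)$ is nef and big and the morphism $\eta\colon S\to Z$ of the introduction is birational, contracting precisely the curves $\gamma$ with $(K_S+C)\cdot\gamma=0$ (this is exactly case (i) of Conjecture~\ref{conjecture:big}). Following the Ross--Thomas philosophy I would begin with the deformation to the normal cone of a center $V\subset S$ adapted to the $\eta$-exceptional locus, namely the blow-up $\mathrm{Bl}_{V\times\{0\}}(S\times\CC)$ with its canonical $\CC^*$-action. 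Then, in the variation that is the point of this paper and that is in the spirit of the Li--Xu MMP reduction to special test configurations, I would flop the $\eta$-contracted curves inside this three-fold total space to obtain a modified family $\mathcal{X}\to\CC$; the flop replaces the central fiber by one in which the component coming from $S$ has been partially contracted along $\eta$, producing a still $\CC^*$-equivariant but geometrically simpler test configuration.

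With $\mathcal{X}$ in hand I would compute its log Donaldson--Futaki invariant by intersection theory on the total space, tracking how the flop alters the relevant intersection numbers and incorporating the edge correction from the log term $(1-\beta)C$ and the angle $\beta$. This yields the closed expression \eqref{FutakiLongEq} as a polynomial in $\beta$ whose coefficients are intersection numbers of $S$, $C$, $V$, and $-(K_S+C)$. Finally I would study the limit $\beta\to0$: the relevant polarization $-(K_S+(1-\beta)C)=-(K_S+C)+\beta C$ converges to the nef and big class $-(K_S+C)$ with $(-(K_S+C))^2=(K_S+C)^2>0$, and the leading term of \eqref{FutakiLongEq} should be governed by this strictly positive number, forcing the invariant negative for every sufficiently small $\beta$.

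The main obstacle I anticipate lies in the geometric construction of the second step together with the sign analysis of the third. First, one must choose $V$, verify that the $\eta$-contracted curves can genuinely be flopped, and check that the flop produces a normal, flat, $\CC^*$-equivariant test configuration whose central fiber is controlled well enough to evaluate all the intersection numbers; this is exactly where the birational structure of ALF surfaces and the Li--Xu perspective must be used, and where the flop earns its keep by turning an a priori complicated slope computation into the tractable formula \eqref{FutakiLongEq}. Second, one must show that the leading $\beta$-coefficient has the correct negative sign, i.e.\ that the flopped center violates the log slope inequality precisely \emph{because} $(K_S+C)^2\neq0$, and that this negativity is uniform in $\beta$ near $0$ rather than merely in the limit. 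Controlling that expansion uniformly, so that the conclusion holds for all small $\beta$ at once, is the delicate part.
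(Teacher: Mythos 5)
Your high-level architecture matches the paper's: invoke Theorem \ref{theorem:Berman}, build one destabilizing test configuration by flopping on a deformation to the normal cone, compute its Futaki invariant by intersection theory, and let $\beta\to0$. But the steps you defer as ``anticipated obstacles'' are exactly the content of the proof, and your sketch points at them in the wrong direction in two places. First, the reduction: the paper does not work with an abstract birational $\eta$; it uses the classification \cite[Theorem 1.4]{CR}, which says that an ALF pair with $-K_S-C$ big (your nef-and-big observation is correct, and is indeed how $(K_S+C)^2\neq0$ enters) is either of Maeda class ($-K_S-C$ ample), handled by \emph{ordinary} slope stability with center $Z=C$ in Proposition \ref{theorem:Maeda} with no flop needed, or is a blow-up $\pi_O\colon S'\to S$ of a Maeda pair at $r>0$ points of the boundary curve. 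Second, the center of the degeneration is not ``adapted to the $\eta$-exceptional locus'': it is the boundary curve itself ($Z'=C'$), and what gets flopped are the proper transforms $C_1,\dots,C_r$ of the exceptional $(-1)$-curves of $\pi_O$, which Lemma \ref{lemma:normal-bundle} shows have normal bundle $\mathcal{O}(-1)\oplus\mathcal{O}(-1)$ in $\mathcal{X}'$ precisely because each meets $E_{Z'}$ transversally in one point; this is the verification you flag but do not supply, and it is what makes the Atiyah flops available and --- via the explicit realization of $\hat{\mathcal{X}}'$ as the blow-up of $\mathcal{X}$ along the curves $\tilde L_i$ --- keeps the flopped total space projective.

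The deeper gap is the quantitative mechanism you never articulate: \emph{why} the flop helps. Plain slope stability fails for $(S',C')$ because $\epsilon(S',C',L'_\beta)\le\beta$ by Lemma \ref{lemma:Seshadri-constants}, so the slope parameter $c$ is pinned in $(0,\beta)$, where the Futaki invariant is positive. The point of flop-slope stability is that after flopping, $\hat{\mathcal{L}}'_{\beta,c}$ is $\hat p'$-ample on the much larger window $\beta<c<\epsilon(S,C,L_\beta)$ (Lemma \ref{lemma:flop-slop-ample}), which stays bounded below as $\beta\to0$; one then takes $c=\gamma$ just below $\epsilon(S,C,-K_S-C)>0$. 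The flopped Futaki invariant is computed from the unflopped one by the correction of Proposition \ref{proposition:Futaki-flop-slop}, which rests on the flop intersection formula of Lemma \ref{lemma:flops-intersections}, $\hat H_1.\hat H_2.\hat H_3=H_1.H_2.H_3-(H_1.C)(H_2.C)(H_3.C)$, together with $\mathcal{L}'_{\beta,c}.C_i=\beta-c$ and $\mathcal{D}'.C_i=0$. Finally, your sign heuristic is off: the negativity in \eqref{FutakiLongEq} is not ``governed by'' $(K_S+C)^2>0$ as a leading coefficient; the computation gives a limit $\le-2\gamma^2<0$ as $\beta\to0^+$, with $\gamma$ any positive number below the Seshadri constant of the ample class $-K_S-C$ on the minimal pair, and uniformity in small $\beta$ is then automatic since the $\beta$-dependent terms of \eqref{FutakiLongEq} are bounded. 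So while your outline is the right one, as written it would not close: the case split, the choice $Z'=C'$, the floppability and projectivity checks, the ampleness window for $c$, and the flop correction formula must all be established, and they constitute essentially all of \S\ref{section:flops}, \S\ref{section:proof}, and Appendix \ref{appendix}.
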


In other words, we give a completely elementary and verifiable
criterion that is equivalent to log K-unstability in the small
angle regime. The proof involves a modification of the notion of
slope stability due to Ross and Thomas \cite{RT1}, where we
additionally perform flops on the deformation to the
normal cone.
In Li--Xu \cite{LiXu} it was shown that the generalized Futaki invariant
decreases under certain modifications and our construction
is partly inspired by those general results, although we do not make
use of them.
This construction using flops
occupies most of this article,
and we believe it is of independent interest.

This {\it flop-slope }$\!\!$ 
construction is essential to the proof of Theorem \ref{MainThm}
since for asymptotically logarithmic del Pezzo surfaces the
more traditional obstructions of Matsushima, Futaki, and
Ross--Thomas \cite{Matsushima,Futaki,RT1} are not sufficient, as examples in this article and in \cite{CR} show. We expect the method developed in
this article to yield
many more new examples of non-existence in
different settings and in
 higher dimensions.

We remark that the converse to Conjecture~\ref{conjecture:main}
is open: we refer to \cite[\S9]{R14} for a discussion of partial
results.

\subsection{Organization}

In \S\ref{PrelimSec} we review some preliminaries: the intersection-theoretic
formula for the generalized Futaki invariant, (log) slope stability,
and also derive some related useful formulas for asymptotically log del Pezzo
surfaces. In \S\ref{section:maeda-class} we apply these formulas to prove
Theorem \ref{MainThm} for the simplest
subclass of \aldp surfaces: the Maeda class for which $0\in\h{\rm Amp}(M,D)$.
Section \ref{section:flops} is the heart of the article, and contains
our
modification of slope stability, which we call flop-slope stability.
The main result here is Proposition \ref{proposition:Futaki-flop-slop}
that gives a formula for the Futaki invariant for
the flopped test configuration. Some technical intersection-theoretic
result needed here is proved in Appendix A.
The proof of Theorem \ref{MainThm} is then carried out in \S\ref{section:proof}.
In \S\ref{ExampSec} we collect some further examples.

\chardef\inodot="10

\subsection*{Acknowledgements}
The authors thank
J. Mart\'\inodot nez-Garc\'\inodot a 
and R. Thomas for comments.
The authors are grateful to R.J. Berman and Chalmers
University of Technology for the hospitality and financial
support during Summer 2014.
The NSF supported this research through grants DMS-1206284,1515703.
IAC was also supported by grant 15-11-20036 of the 
Russian Science Foundation.
YAR was also supported by a Sloan Research Fellowship.

\section{Preliminaries}
\label{PrelimSec}
\subsection{Generalized Futaki invariant}
\label{section:DF}

Let ${{\beta}}=(\beta_1,\ldots,\beta_m)\in(0,1]^m$ be a
vector. Let $X$ be a normal $\mathbb{Q}$-factorial variety (of
complex dimension $n$), let $D=\sum_{i=1}^mD_i$ be a divisor
(where the $D_i$ are distinct $\mathbb{Q}$-Cartier prime Weil
divisors) on $X$, and let $L_{{\beta}}$ be an ample
$\mathbb{R}$-divisor on $X$ (that a priori may depend on
${{\beta}}$). Put
$$
D_{{\beta}}=\sum_{i=1}^m(1-\beta_i)D_i.
$$
Let
$(\mathcal{X},\mathcal{L}_{{\beta}},\mathcal{D},{{\beta}})$
be a quadruple consisting of a normal $\mathbb{Q}$-factorial
variety $\mathcal{X}$ of dimension $n+1$, equipped with a flat
surjective map
$p\colon \mathcal{X}\to\mathbb{P}^1$,
$\mathbb{R}$-divisor $\mathcal{L}_{{\beta}}$
\footnote{
We do {\it not} assume $\mathcal{L}_{{\beta}}$
is $p$-ample in the definition of
$\mathrm{F}(\mathcal{X},\mathcal{L}_{{\beta}},\mathcal{D},{{\beta}}).$
An $\mathbb{R}$-Cartier divisor $A$ on
$\mathcal{X}$ is  $p$-ample (resp., $p$-big) if $A+p^\star B$ is
ample (resp., big) for some divisor $B$ on $\mathbb{P}^1$.}; a divisor
$\mathcal{D}=\sum_{i=1}^m\mathcal{D}_i$ (where the $\mathcal{D}_i$
are distinct $\mathbb{Q}$-Cartier prime Weil divisors) on
$\mathcal{X}$. Suppose that all fibers of $p$ except the fiber
over $[0:1]$ (which we call the central fiber) 
are isomorphic to $X$, and the divisors
$\mathcal{L}_{{\beta}}$ and $\mathcal{D}_i$ restricted to
these fibers are $L_{{\beta}}$ and $D_i$, respectively. Thus,
$\mathrm{Supp}(\mathcal{D})$ does not contain components of the
fibers of $p$ (if it did, $\mathcal{D}_i$ restricted to
different fibers would be different, but we assume the restriction is always
the same, namely, $D_i$), 
and so in particular it does not contain components
of the central fiber.
The generalized Futaki invariant is
\begin{multline}
\label{FDefEq}
\mathrm{F}(\mathcal{X},\mathcal{L}_{{\beta}},\mathcal{D},{{\beta}}):=n\frac{-(K_X+\sum_{i=1}^m(1-\beta_i)D_i).L_{{\beta}}^{n-1}}{L_{{\beta}}^n}\mathcal{L}_{{\beta}}^{n+1}\\
+(n+1)\Big(K_{\mathcal{X}}-p^\star(K_{\mathbb{P}^1})+\sum_{i=1}^m(1-\beta_i)\mathcal{D}_i\Big).\mathcal{L}_{{\beta}}^n.%
\end{multline}
Whenever the triple
$(\mathcal{X},\mathcal{L}_{{\beta}},\mathcal{D})$ is a test
configuration in the sense of Tian \cite{Tian1997} and Donaldson
\cite{Don2002}, then
$F(\mathcal{X},\mathcal{L}_{{\beta}},\mathcal{D})$ equals its
Futaki invariant in the sense of Ding--Tian or Donaldson
\cite{DingTian,Don2002,WangXW,Odaka,Berman,LiXu,T15}. If
\begin{equation}
\label{equation:L-beta-K}
L_{{\beta}}\sim_{\mathbb{R}}-(K_X+\sum_{i=1}^m(1-\beta_i)D_i)
\end{equation}
(so that $(X,\sum_{i=1}^m(1-\beta_i)D_i)$ is a log Fano variety),
the formula for
$\mathrm{F}(\mathcal{X},\mathcal{L}_{{\beta}},\mathcal{D},{{\beta}})$
simplifies to
\begin{equation}
\label{equation:intersection-formula}
\mathrm{F}(\mathcal{X},\mathcal{L}_{{\beta}},\mathcal{D},{{\beta}})=n\mathcal{L}_{{\beta}}^{n+1}+(n+1)\bigg(K_{\mathcal{X}}
-p^\star K_{\mathbb{P}^1}+\sum_{i=1}^m(1-\beta_i)\mathcal{D}_i\bigg).\mathcal{L}_{{\beta}}^n.%
\end{equation}

We recall the following result:

\begin{theorem}{\rm\cite[Theorem 4.8]{Berman}}
\label{theorem:Berman} Suppose that $X$ is smooth,
$D=\sum_{i=1}^mD_i$ is a simple normal crossing divisor, and
\eqref{equation:L-beta-K} holds. Let
$(\mathcal{X},\mathcal{L}_{{\beta}},\mathcal{D},{{\beta}})$
be a test configuration for $(X,L_{{\beta}},D_{{\beta}})$.
Assume that $\mathcal{L}_{{\beta}}$ is $p$-ample.
If $\mathrm{F}(\mathcal{X},\mathcal{L}_{{\beta}},\mathcal{D},{{\beta}})<0$,
then $(X,D,{\beta})$ does not admit a KEE metric.
\end{theorem}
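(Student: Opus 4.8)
The assertion is the obstruction (``only if'') direction of the logarithmic Yau--Tian--Donaldson correspondence, so my plan is to realize the \KEEno condition as a variational problem and to show that the generalized Futaki invariant $\mathrm{F}$ computes the asymptotic slope of the associated energy along a degeneration adapted to the test configuration. I would work with the (log) Ding functional $\mathbf{D}_\beta$ on the space $\mathcal{E}^1(X,L_\beta)$ of finite-energy potentials, whose defining measure carries the edge weight $\prod_i|s_i|^{2(1-\beta_i)}$ built from sections $s_i$ cutting out the $D_i$. Under the hypothesis \eqref{equation:L-beta-K} a \KEEno metric with angle $2\pi\beta_i$ along $D_i$ is precisely a critical point, and in fact a minimizer, of $\mathbf{D}_\beta$.

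The first structural input is convexity. By Berndtsson's theorem on the plurisubharmonic variation of fiberwise $L^2$-norms, together with its extension to the finite-energy class, the functional $\mathbf{D}_\beta$ is convex along weak geodesics in $\mathcal{E}^1$. Hence, the moment a minimizer exists---namely the \KEEno metric---the functional is bounded from below. The entire proof is thereby reduced to showing that $\mathrm{F}<0$ forces $\mathbf{D}_\beta$ to be unbounded below.

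To tie $\mathrm{F}$ to $\mathbf{D}_\beta$ I would attach to the test configuration $(\mathcal{X},\mathcal{L}_\beta,\mathcal{D},\beta)$ a weak geodesic ray $t\mapsto\phi_t\in\mathcal{E}^1$ emanating from a fixed reference potential, constructed from the $\CC^\star$-action on the total space and the relative polarization; this is exactly where the hypothesis that $\mathcal{L}_\beta$ is $p$-ample enters, as it guarantees that the ray has finite energy and is controlled near the central fiber. The crux is then the slope identity
$$
\lim_{t\to\infty}\frac{\mathbf{D}_\beta(\phi_t)}{t}=c\,\mathrm{F}(\mathcal{X},\mathcal{L}_\beta,\mathcal{D},\beta),\qquad c>0,
$$
which matches the analytic asymptotic slope with the purely intersection-theoretic quantity \eqref{FDefEq}. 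Granting this, boundedness below of $\mathbf{D}_\beta$ along the ray forces the left-hand side to be nonnegative, so $\mathrm{F}\geq 0$; the contrapositive is precisely the claim that $\mathrm{F}<0$ precludes a \KEEno metric.

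The main obstacle is the slope identity itself. It demands: (i) rewriting the \MA energy and the logarithmic term of $\mathbf{D}_\beta$ along the ray as Deligne-pairing (arithmetic intersection) numbers on $\mathcal{X}$, so that their difference collapses to \eqref{FDefEq}; (ii) controlling the limited regularity of the weak geodesic, which by Chen's theorem is only $C^{1,1}$, through an approximation and semicontinuity argument; and (iii) treating the edge singularities along $\mathcal{D}$ uniformly in $\beta$, so that the cone weights $1-\beta_i$ enter linearly and reproduce the $\sum_i(1-\beta_i)\mathcal{D}_i$ contribution in \eqref{FDefEq}. The normality and $\QQ$-factoriality of $\mathcal{X}$, combined with $p$-ampleness, are what render these intersection numbers well defined and the Deligne-pairing computation legitimate even when $\mathcal{L}_\beta$ fails to be ample on the whole total space.
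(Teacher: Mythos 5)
The paper does not prove this statement at all: it is quoted verbatim from Berman \cite[Theorem 4.8]{Berman}, so there is no in-paper argument to compare against, and your proposal should be measured against Berman's actual proof. On that score your outline reconstructs the right strategy---the log Ding functional $\mathbf{D}_\beta$ with edge weight $\prod_i|s_i|^{2(\beta_i-1)}$, Berndtsson convexity along weak finite-energy geodesics forcing boundedness below once a KEE minimizer exists, a geodesic ray attached to the test configuration via the $\CC^\star$-action and the $p$-ample relative polarization, and an asymptotic slope computation expressed through Deligne-pairing/energy asymptotics. This is structurally Berman's proof.

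There is, however, one substantive misstatement at the crux: the slope identity $\lim_{t\to\infty}\mathbf{D}_\beta(\phi_t)/t=c\,\mathrm{F}(\mathcal{X},\mathcal{L}_\beta,\mathcal{D},\beta)$ is \emph{false} as an equality for a general normal test configuration. What the slope of the Ding functional actually computes is the (log) \emph{Ding invariant}, in which the adjoint term $\bigl(K_{\mathcal{X}}-p^\star K_{\mathbb{P}^1}+\sum_i(1-\beta_i)\mathcal{D}_i\bigr).\mathcal{L}_\beta^n$ of \eqref{FDefEq} is replaced by a log-canonical-threshold term attached to the central fiber; the intersection-theoretic invariant $\mathrm{F}$ of \eqref{FDefEq} dominates the Ding invariant, with equality only for special (e.g.\ log canonical, reduced central fiber) configurations. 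So as literally stated your step (i) would fail; the repair is standard but not cosmetic: one proves the inequality (slope of $\mathbf{D}_\beta$) $\le c\,\mathrm{F}$ via a valuative/discrepancy estimate comparing the lct term with $K_{\mathcal{X}/\mathbb{P}^1}.\mathcal{L}_\beta^n$, and the one-sided bound is exactly what the contrapositive needs: existence of a KEE metric gives slope $\ge 0$, hence $\mathrm{F}\ge 0$. With that correction (and your points (ii)--(iii) on $C^{1,1}$ regularity and uniformity in $\beta$, which Berman indeed handles by approximation), the argument goes through and agrees with the cited source.
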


The simplest possible case (beyond a product configuration) 
when we can effectively apply this
theorem is when $X$ is smooth  and the triple
$(\mathcal{X},\mathcal{L}_\beta,\mathcal{D})$ is a very particular
test configuration obtained via deformation to the normal cone of
a smooth subvariety in $X$. This construction is originally due to
Ross--Thomas \cite{RT1}. We now turn to describe it.

\subsection{Slope stability}
\label{section:slope-stability}

Let $X$ be a smooth variety, and let $Z$ be a smooth subvariety in
$X$. Consider the blow-up of $Z\times\{[0:1]\}$ in
$X\times\mathbb{P}^1$. We denote the resulting space (of complex
dimension $n+1$) by $\mathcal{X}$ and denote the blow-down map by
$\pi_Z$. Denote the $\pi_Z$-exceptional divisor by $E_Z$. Let
$p_{\mathbb{P}^1}\colon
X\times\mathbb{P}^1\rightarrow\mathbb{P}^1$ and $p_X\colon
X\times\mathbb{P}^1\rightarrow X$ denote the natural projections.

Put 
$$
p:=p_{\mathbb{P}^1}\circ\pi_Z.
$$
The morphism
$p\colon\mathcal{X}\to\mathbb{P}^1$ is flat 
\cite[Proposition~9.7]{Har77}. Its fibers over every point that is
different from $[0:1]$ are isomorphic to $X$. The fiber
$\mathcal{X}_0$ over $[0:1]\in\mathbb{P}^1$ is the union $E_Z\cup
X_0$, where $X_0$ is the proper transform of $X\times\{[0:1]\}$,
and 
\beq
\lb{EZEq}
E_Z=\mathbb{P}(\nu_Z\oplus\mathcal{O}_Z)
\eeq
is a smooth ruled variety. 
Here $\nu_Z$ denotes the normal bundle of $Z$ in $X$, and
$\mathcal{O}_Z$ denotes the trivial line bundle over $Z$. Of
course, $\nu_Z\oplus\mathcal{O}_Z$ is the normal bundle of
$Z\times\{[0:1]\}$ in $X\times\mathbb{P}^1$. Note that $X_0$ is
the blow-up of $X$ at $Z$. Thus, if $Z$ is a divisor in $X$, then
$X_0$ is simply a copy of $X$.

Denote by $\pi_0$ the morphism 
$
p_X\circ\pi_Z|_{X_0}:X_0\rightarrow X$, which is just
the blow-down map of $Z$ in $X$.
In
fact, 
$E_Z$ intersect $X_0$ exactly at the
exceptional locus of $\pi_0$ (here we slightly abuse language,
since when $Z$ is a divisor, this locus is not exceptional, but is
just a copy of $Z$, the proper transform of $Z$).

Let ${{\beta}}=(\beta_1,\ldots,\beta_m)\in(0,1]^m$ be a
vector, and let $L_{{\beta}}$ be an  ample  $\mathbb{R}$-divisor
on $X$ that may depend on the vector ${{\beta}}$. Put
\beq\label{LbetaEq}
\mathcal{L}_{{\beta,c}}:=(p_X\circ\pi_Z)^\star L_{{\beta}}-cE_Z%
\eeq
for some $c>0$. Recall the definition of the Seshadri constant of
$(X,Z)$ with respect to $L_\beta$,
\beq\label{espilonXZLEq}
\epsilon(X,Z,L_{{\beta}})=\sup\big\{c>0\,:\, \pi_0^\star(L_{{\beta}})-cE_Z|_{X_0}
\h{\rm \ is ample}\big\}.%
\eeq
Thus, if $c\ge\epsilon(X,Z,L_{{\beta}})$, then
$\mathcal{L}_{{\beta}}$ is not $p$-ample.
The following is a special case of \cite[Lemma~4.1]{RT1}. We
give a simple direct proof for the reader's convenience.
We make use of the following simple fact more than once in this
article, so we record it here:
\beq
\lb{intersectfiberEq}
\h{if $C$ is a curve contained in the central fiber then
$C.S_0=-C.E_Z$.}
\eeq
Indeed, since $C$ is contained in a fiber of $p:{\mathcal X}\ra\PP^1$
and $S_0\cup E_Z$ is such a fiber (the central fiber) then
$C.(S_0+E_Z)=0$.

\begin{lemma}{\rm}
\label{lemma:p-ample} Suppose that $c\in(0,
\epsilon(X,Z,L_{{\beta}}))$. Then $\mathcal{L}_{{\beta},c}$
is $p$-ample.
\end{lemma}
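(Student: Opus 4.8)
The plan is to reduce $p$-ampleness of the $\mathbb{R}$-Cartier divisor $\mathcal{L}_{\beta,c}$ to a fiberwise statement and then to check ampleness separately on the two components of the central fiber. By the relative Nakai--Moishezon (Kleiman) criterion, valid for $\mathbb{R}$-Cartier divisors, $\mathcal{L}_{\beta,c}$ is $p$-ample if and only if $\mathcal{L}_{\beta,c}^{\dim V}\cdot V>0$ for every irreducible subvariety $V$ contained in a fiber of $p$. Every fiber other than $\mathcal{X}_0$ is a copy of $X$ on which $\mathcal{L}_{\beta,c}$ restricts to $L_\beta$ (since $\pi_Z$ is an isomorphism there and $E_Z$ is disjoint from these fibers), which is ample, so these $V$ cause no trouble. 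It remains to treat $V\subseteq\mathcal{X}_0=X_0\cup E_Z$; as $V$ is irreducible it lies in $X_0$ or in $E_Z$, and the required positivity follows once I show that $\mathcal{L}_{\beta,c}$ restricts to an ample $\mathbb{R}$-divisor on each of $X_0$ and $E_Z$.

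On $X_0$ the restriction is exactly $\pi_0^\star L_\beta-cE_Z|_{X_0}$, because $p_X\circ\pi_Z|_{X_0}=\pi_0$ and $E_Z|_{X_0}$ is the exceptional divisor of $\pi_0$. By the very definition \eqref{espilonXZLEq} of the Seshadri constant this class is ample for $c\in\big(0,\epsilon(X,Z,L_\beta)\big)$: since $c\mapsto\pi_0^\star L_\beta-cE_Z|_{X_0}$ is affine and the ample cone is open and convex, the set of ample values of $c$ is an interval of $\mathbb{R}_{>0}$ that contains all sufficiently small $c>0$ as well as values arbitrarily close to $\epsilon(X,Z,L_\beta)$, hence the whole interval $\big(0,\epsilon(X,Z,L_\beta)\big)$. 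This disposes of the $X_0$ component.

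The remaining, and main, point is ampleness on $E_Z=\mathbb{P}(\nu_Z\oplus\mathcal{O}_Z)$. Writing $\rho\colon E_Z\to Z$ for the bundle projection, the restriction is $\rho^\star(L_\beta|_Z)+c\,\zeta$, where $\zeta:=-E_Z|_{E_Z}$ is the Serre class $\mathcal{O}_{E_Z}(1)$, which is $\rho$-ample. Up to the positive scale $c$, ampleness of this class is equivalent to ampleness of the vector bundle $(\nu_Z\oplus\mathcal{O}_Z)\otimes\tfrac1c(L_\beta|_Z)=\big(\nu_Z\otimes\tfrac1c(L_\beta|_Z)\big)\oplus\big(\tfrac1c(L_\beta|_Z)\big)$; since a direct sum is ample exactly when each summand is, it suffices to check the two summands. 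The line bundle $\tfrac1c(L_\beta|_Z)$ is ample because $L_\beta|_Z$ is ample and $c>0$. For $\nu_Z\otimes\tfrac1c(L_\beta|_Z)$ I would feed in the $X_0$ result by restricting the already-established ample class $\pi_0^\star L_\beta-cE_Z|_{X_0}$ to the divisor $E_Z\cap X_0=\mathbb{P}(\nu_Z)$: there it becomes $\rho'^\star(L_\beta|_Z)+c\,\mathcal{O}_{\mathbb{P}(\nu_Z)}(1)$, which is ample as the restriction of an ample class, and this is precisely the statement that $\nu_Z\otimes\tfrac1c(L_\beta|_Z)$ is an ample vector bundle. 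Hence both summands are ample, so $\mathcal{L}_{\beta,c}|_{E_Z}$ is ample, and combining the two ampleness statements with the relative Nakai--Moishezon criterion yields that $\mathcal{L}_{\beta,c}$ is $p$-ample. I expect the main obstacle to be exactly this last step: identifying $-E_Z|_{E_Z}$ with $\mathcal{O}_{E_Z}(1)$, keeping the $\mathbb{P}(\cdot)$ convention consistent so that ampleness of $\mathcal{O}_{\mathbb{P}(F)}(1)+\rho^\star(\text{ample})$ translates into ampleness of the twisted bundle, and handling the trivial $\mathcal{O}_Z$ summand; once these bundle-theoretic identifications are fixed the remaining verifications are routine.
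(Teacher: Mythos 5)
Your proposal is correct in substance but takes a genuinely different route from the paper's. The paper works entirely with curves: by Kleiman's criterion it suffices to bound $\mathcal{L}_{\beta,c}.C$ from below by the uniform constant $\gamma=\min\{c,\gamma_0,\gamma_1\}>0$ over all curves $C$ contracted by $p$, and this is done by a four-case analysis --- curves in non-central fibers; curves in the fibers of $E_Z\to Z$, where $-E_Z$ restricts to the hyperplane class; curves in $E_Z$ dominating $Z$, handled via the identity \eqref{intersectfiberEq}; and curves in $X_0$, handled by the Seshadri hypothesis exactly as in your second paragraph. You instead reduce to fiberwise ampleness (Nakai on each component of each fiber) and, on $E_Z$, invoke the theory of ample $\mathbb{R}$-twisted vector bundles: the Serre-class dictionary, the direct-sum criterion, and restriction of the already-ample class on $X_0$ to $E_Z\cap X_0$. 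Your route is less elementary --- it needs Campana--Peternell's Nakai criterion for $\mathbb{R}$-divisors, the $\mathbb{R}$-divisor version of ``ample on all fibers implies $p$-ample'' (fillable by openness: write $\mathcal{L}_{\beta,c}$ as a positive combination of $\mathbb{Q}$-divisors of the same shape, each ample on fibers), and Lazarsfeld's twisted-bundle formalism --- but it isolates the conceptual content and avoids curve-by-curve bookkeeping; the paper's route is self-contained, produces the explicit constant $\gamma$, and sidesteps all projectivization conventions. Both arguments assert, without detail, a relative Kleiman/Nakai sufficiency for $\mathbb{R}$-divisors, so you are on equal footing there.

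One concrete correction to your $E_Z$ step: with the paper's Griffiths--Harris (``lines in fibers'') convention $E_Z=\PP(\nu_Z\oplus\mathcal{O}_Z)$ used in \eqref{EZEq}, the class $-E_Z|_{E_Z}$ is the Serre $\mathcal{O}(1)$ of the \emph{dual} bundle, so ampleness of $\mathcal{L}_{\beta,c}|_{E_Z}$ is equivalent to ampleness of the $\mathbb{R}$-twisted bundle $(\nu_Z^\vee\oplus\mathcal{O}_Z)\langle\frac{1}{c}L_\beta|_Z\rangle$, not of $(\nu_Z\oplus\mathcal{O}_Z)\otimes\frac{1}{c}(L_\beta|_Z)$. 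A quick test when $Z$ is a curve in a surface: the section $E_Z\cap X_0$ imposes $L_\beta.Z-cZ^2>0$, which is degree-positivity of $\nu_Z^\vee\langle\frac{1}{c}L_\beta|_Z\rangle$, whereas $\nu_Z\langle\frac{1}{c}L_\beta|_Z\rangle$ would give the wrong constraint $L_\beta.Z+cZ^2>0$. You flagged precisely this convention hazard, and your mechanism is unaffected by the fix: restricting the ample class $\pi_0^\star L_\beta-cE_Z|_{X_0}$ to $E_Z\cap X_0$ yields, by definition, ampleness of the nontrivial summand $\nu_Z^\vee\langle\frac{1}{c}L_\beta|_Z\rangle$ (the Serre classes of $E_Z\cap X_0\subset E_Z$ are compatible), while the trivial summand is ample because $L_\beta|_Z$ is. With this dual bookkeeping, and the routine perturbation for the $\mathbb{R}$-coefficient reductions, your proof is complete.
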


\begin{proof}
Since $L_{{\beta}}$ is ample, by Kleiman's criterion 
there is a positive constant $\gamma_0$
depending only on $L_{{\beta}}$ such that
$$
L_{{\beta}}. C\ge\gamma_0
$$
for every curve $C$ in $X$. Similarly, there is a positive constant
$\gamma_1$ depending on $L_{{\beta}}$ and $c$ alone such
that
$$
\big(\pi_0^\star(L_{{\beta}})-cE_Z|_{X_0}\big). C\ge\gamma_1%
$$
for every curve $C\subset X_0$, because
$c<\epsilon(X,Z,L_{{\beta}})$.

Put 
$$
\gamma:=
\min\{c,\gamma_0,\gamma_1\}.
$$ 
We claim that
$\mathcal{L}_{{\beta},c}. C\ge\gamma$ for every curve
$C\subset\mathcal{X}$ such that $p(C)$ is a point. The latter
implies $p$-ampleness of the divisor $\mathcal{L}_{{\beta},c}$.

Let $C$ be a curve in $\mathcal{X}$ such that $p(C)$ is a point
(so that $C$ lies in some fiber).
If $C$ is not in the central fiber $E_Z\cup X_0$, then
$$
\mathcal{L}_{{\beta},c}. C
=L_{{\beta}}. p_X\circ\pi_Z(C)\ge\gamma_0\ge\gamma.%
$$
If $C$ is in the central fiber and is contracted by $\pi_Z$ to a
point, i.e., $C$ is contained in a fiber of $E_Z\mapsto Z$ (a
$\PP^{n-1}$ bundle), then
$
\mathcal{L}_{{\beta},c}. C=-cE_Z. C\ge
c\ge\gamma,
$
since $-E_Z. C\ge 1$ in this case
as $-E_Z$ restricts to the hyperplane bundle on each fiber. 
If $C\subset E_Z$,
$C\not\subset X_0$ and $C$ is not contracted by $\pi_Z$ to a
point, then using \eqref{intersectfiberEq},
$$
\mathcal{L}_{{\beta},c}.
C=L_{{\beta}}.p_X\circ\pi_Z(C)-cE_Z. C
=
L_{{\beta}}.p_X\circ\pi_Z(C)+cX_0. C
\ge 
L_{{\beta}}.p_X\circ\pi_Z(C)\ge\gamma_0\ge\gamma.%
$$
If $C\subset X_0$, then
$$
\mathcal{L}_{{\beta},c}.
C=\big((p_X\circ\pi_Z)^\star(L_{{\beta}})-cE_Z\big)\vert_{X_0}.
C=\big(\pi_0^\star(L_{{\beta}})-cE_Z|_{X_0}\big).\pi_Z(C)\ge\gamma_1\ge\gamma,
$$
concluding the proof.
\end{proof}

Let $D=\sum_{i=1}^mD_i$ be a simple normal crossing divisor on
$X$, where the $D_i$ are distinct smooth prime divisors on $X$.
For every $c\in (0,\epsilon(X,Z, L_{{\beta}}))$,
$(\mathcal{X},\mathcal{L}_{{\beta},c},\mathcal{D}_{{\beta}})$
is a test configuration for $(X,L_{{\beta}},D_{{\beta}})$.
We assume that $\mathcal D_i$ is the proper transform
of $D_i\times\PP^1$ in $\mathcal X$.
Recall the following definition due to Ross--Thomas and Li--Sun.

\begin{definition}
\label{definition:slope-unstable} The triple  
$(X,L_{{\beta}},D_{{\beta}})$ is slope unstable with respect to $Z$ if
$F(\mathcal{X},\mathcal{L}_{{\beta},c},\mathcal{D},{{\beta}})<0$
for some $c\in(0,\epsilon(X,Z, L_{{\beta}}))$.
\end{definition}

Note that according to \eqref{espilonXZLEq}
and Lemma \ref{lemma:p-ample}, the assumption on $c$ in
Definition \ref{definition:slope-unstable} guarantees that
Theorem~\ref{theorem:Berman} is applicable. 

\begin{corollary}
\label{corollary:slope-unstable} If 
$(X,L_{{\beta}},D_{{\beta}})$ is slope unstable with respect to $Z$, then 
$(X,D,\beta)$
does not admit a KEE metric.
\end{corollary}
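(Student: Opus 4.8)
The plan is to reduce the statement directly to Theorem \ref{theorem:Berman}, which supplies the only genuinely analytic input; everything else amounts to checking that the single value of $c$ produced by slope instability makes the deformation-to-the-normal-cone construction satisfy that theorem's hypotheses. First I would unwind Definition \ref{definition:slope-unstable}: the assumption that $(X,L_{\beta},D_{\beta})$ is slope unstable with respect to $Z$ furnishes a constant $c\in(0,\epsilon(X,Z,L_{\beta}))$ for which the generalized Futaki invariant $F(\mathcal{X},\mathcal{L}_{\beta,c},\mathcal{D},\beta)$ is strictly negative. This is exactly the sign condition appearing as a hypothesis in Theorem \ref{theorem:Berman}, so the remaining task is to verify the other hypotheses of that theorem for this same $c$.

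Second, I would confirm the $p$-ampleness of $\mathcal{L}_{\beta,c}$. Since $c$ was chosen in the open interval $(0,\epsilon(X,Z,L_{\beta}))$, Lemma \ref{lemma:p-ample} applies verbatim and gives that $\mathcal{L}_{\beta,c}$ is $p$-ample. Next I would invoke the discussion preceding the definition, where it is recorded that for every such $c$ the triple $(\mathcal{X},\mathcal{L}_{\beta,c},\mathcal{D}_{\beta})$ is a test configuration for $(X,L_{\beta},D_{\beta})$, with $\mathcal{D}_i$ the proper transform of $D_i\times\PP^1$ in $\mathcal{X}$. Together with the smoothness of $X$, the simple normal crossing hypothesis on $D=\sum_i D_i$, and the standing cohomological normalization \eqref{equation:L-beta-K} that places us in the log Fano regime where Theorem \ref{theorem:Berman} is stated, every hypothesis of that theorem is now in force.

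Finally, applying Theorem \ref{theorem:Berman} to the test configuration $(\mathcal{X},\mathcal{L}_{\beta,c},\mathcal{D},\beta)$ yields immediately that $(X,D,\beta)$ admits no KEE metric, which is the assertion. I do not expect any genuine obstacle: the corollary is a bookkeeping consequence assembled from Definition \ref{definition:slope-unstable}, Lemma \ref{lemma:p-ample}, and Theorem \ref{theorem:Berman}, and the parenthetical remark following the definition already flags that the constraint $c\in(0,\epsilon(X,Z,L_{\beta}))$ is precisely what renders Berman's theorem applicable. The one point deserving a moment's care is that a single $c$ must simultaneously deliver both the negative Futaki sign and the $p$-ampleness; but since $p$-ampleness holds for \emph{every} $c$ in the interval $(0,\epsilon(X,Z,L_{\beta}))$ by Lemma \ref{lemma:p-ample}, while slope instability provides at least one $c$ in that same interval with $F<0$, there is no tension between the two requirements, and the proof closes.
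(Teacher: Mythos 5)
Your proposal is correct and follows exactly the paper's (implicit) argument: the paper proves this corollary precisely via the remark that $c\in(0,\epsilon(X,Z,L_{{\beta}}))$ together with Lemma \ref{lemma:p-ample} makes Theorem \ref{theorem:Berman} applicable to the deformation-to-the-normal-cone test configuration, which is the same assembly of Definition \ref{definition:slope-unstable}, Lemma \ref{lemma:p-ample}, and Theorem \ref{theorem:Berman} that you carry out. Your closing observation that a single $c$ simultaneously satisfies both hypotheses is the right (if minor) point of care, and it is resolved exactly as you say.
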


The importance of this corollary is that the number
$\mathrm{F}(\mathcal{X},\mathcal{L}_{{\beta},c},\mathcal{D},{{\beta}})$
is readily computable for the test configuration described in this subsection
(compared to a general test configuration).

\begin{remark}
\label{remark:slope}
{\rm In all cases we considered so far, if
$F(\mathcal{X},\mathcal{L}_{{\beta},c},\mathcal{D},{{\beta}})<0$
for some $c\in(0,\epsilon(X,Z, L_{{\beta}}))$, then
$F(\mathcal{X},\mathcal{L}_{{\beta},c},\mathcal{D},{{\beta}})<0$
for $c=\epsilon(X,Z, L_{{\beta}})$.
}\end{remark}

In the next section, we compute
$F(\mathcal{X},\mathcal{L}_{{\beta},c},\mathcal{D},{{\beta}})$
in a particular situation.

\subsection{Slope stability for logarithmic surfaces}
\label{section:slope-stability-surface}

Let us use the notation and assumptions of
\S\ref{section:slope-stability}. Suppose, in addition, that $D$ is
a smooth curve in a smooth surface $X$, i.e.,  $m=1, n=2$, 
and $D=D_1, \mathcal{D}=\mathcal{D}_1$, and $Z$ is a smooth
curve in $X$. For transparency, we put $S=X$, $S_0=X_0$,
$p_X=p_S$, $C=D=D_1$, $\beta=\beta_1$,
and $D_\beta=(1-\beta)C$. Then $\mathcal{X}$
is a threefold, and the fiber over $[0:1]\in\mathbb{P}^1$ is the
union of two surfaces $E_Z\cup S_0$, where $S_0$ is the proper
transform of the fiber of $p_{\mathbb{P}^1}$ over $[0:1]$. Since
$C$ is a curve, we have $S_0\cong S$.
Note that
the exceptional divisor
$E_Z\cong\mathbb{P}(\nu_Z\oplus\mathcal{O}_Z)$ is a smooth ruled
surface, where $\nu_Z$ denotes the normal bundle of $Z$ in $S$,
and $\mathcal{O}_Z$ denotes the trivial line bundle over $Z$.

In the case when $L_\beta\sim_{\mathbb{R}}-K_S-(1-\beta)C$, 
there is an
explicit formula for
$F(\mathcal{X},\mathcal{D},\mathcal{L}_\beta,\beta)$.
First, recall some intersection formulas.

\blemma
\lb{IntersectLemma}
One has,  
\beq
\lb{firsteqIntLemmaEq}
E_Z^3=-\mathrm{deg}(N_{Z/\mathcal{X}})=-Z^2,
\eeq
and
\beq
\lb{secondeqIntLemmaEq}
(p_S^\star L_\beta)^3=0,  \qquad 
((p_S\circ\pi_Z)^\star L_\beta).E_Z^2=-(p_S^\star L_\beta).Z=-L_\beta.Z.
\eeq 
\elemma

\bpf
The first equality in \eqref{firsteqIntLemmaEq} follows from \cite[p. 608]{GH}
while the second equality follows from the fact that 
$N_{Z/\mathcal{X}}$ decomposes as $\mathcal{O}_Z(-1)\oplus \mathcal{O}_Z$.
Since $E_Z$ is the projectivization of $N_{Z/\mathcal{X}}$, the previous
decomposition also implies \eqref{secondeqIntLemmaEq} as $\pi_Z(E_Z)=Z$.
\epf

\begin{proposition}
\label{proposition:formula} Suppose that
\eqref{equation:L-beta-K} holds.
Then,
$$
F(\mathcal{X},\mathcal{L}_{\beta,c},\mathcal{D},\beta)=
\left\{%
\aligned
&\Big(6\beta c-3c^2\Big)L_\beta.Z+\Big(2c^3-3c^2\beta\Big)Z^2\ 
\ \ \text{\rm if}\ Z=C,\\%
&\Big(6c-3c^2\Big)L_\beta.Z+\Big(2c^3-3c^2\Big)Z^2\ \ \ \text{\rm if}\ Z\ne C.\\%
\endaligned\right.%
$$
\end{proposition}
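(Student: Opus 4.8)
The plan is to evaluate the simplified intersection formula \eqref{equation:intersection-formula} in the case $n=2$, namely
\[
F(\mathcal{X},\mathcal{L}_{\beta,c},\mathcal{D},\beta)=2\mathcal{L}_{\beta,c}^3+3\big(K_{\mathcal X}-p^\star K_{\PP^1}+(1-\beta)\mathcal{D}\big).\mathcal{L}_{\beta,c}^2,
\]
by reducing every monomial to the intersection numbers furnished by Lemma \ref{IntersectLemma}. Write $H:=(p_S\circ\pi_Z)^\star L_\beta$ and $E:=E_Z$, so that $\mathcal{L}_{\beta,c}=H-cE$ by \eqref{LbetaEq}. In addition to the numbers $H.E^2=-L_\beta.Z$ and $E^3=-Z^2$ recorded in Lemma \ref{IntersectLemma}, I will use the two vanishings $H^3=0$ and $H^2.E=0$. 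The first follows from \eqref{secondeqIntLemmaEq} via the projection formula, since $\pi_Z$ is birational; the second also follows from the projection formula applied to $\pi_Z$, because its exceptional divisor is contracted onto the curve $Z\times\{[0:1]\}$, so $\pi_{Z\star}E=0$ and hence $H^2.E=(p_S^\star L_\beta)^2.\pi_{Z\star}E=0$.

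The first geometric ingredient is the canonical class of the blow-up. Since $\mathcal{X}$ is the blow-up of the smooth threefold $S\times\PP^1$ along the smooth curve $Z\times\{[0:1]\}$, which has codimension two, the discrepancy formula gives $K_{\mathcal X}=\pi_Z^\star K_{S\times\PP^1}+E$; combined with $K_{S\times\PP^1}=p_S^\star K_S+p_{\PP^1}^\star K_{\PP^1}$ this yields $K_{\mathcal X}-p^\star K_{\PP^1}=H'+E$, where $H':=(p_S\circ\pi_Z)^\star K_S$. The second ingredient is the class of $\mathcal{D}$, the proper transform of $C\times\PP^1$, and it is here that the two cases enter. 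When $Z=C$ the blow-up centre is a divisor inside the smooth surface $C\times\PP^1$, so $C\times\PP^1$ passes through the centre with multiplicity one and $\mathcal{D}=(p_S\circ\pi_Z)^\star C-E$. When $Z\neq C$ the centre is not contained in $C\times\PP^1$ (they meet only along the finitely many points of $(Z\cap C)\times\{[0:1]\}$), so no exceptional correction occurs and $\mathcal{D}=(p_S\circ\pi_Z)^\star C$. Feeding these into the expression above and invoking $L_\beta\sim_{\RR}-(K_S+(1-\beta)C)$ from \eqref{equation:L-beta-K}, both cases collapse to the single clean shape
\[
K_{\mathcal X}-p^\star K_{\PP^1}+(1-\beta)\mathcal{D}=-H+aE,\qquad a=\begin{cases}\beta,& Z=C,\\ 1,& Z\neq C,\end{cases}
\]
the coefficient $\beta$ in the case $Z=C$ arising because the $(1-\beta)$ multiple of the $-E$ term combines with the $+E$ coming from $K_{\mathcal X}$.

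It then remains to expand the two products. Writing $\mathcal{L}_{\beta,c}^3=(H-cE)^3$ and discarding the terms containing $H^3$ or $H^2.E$ leaves $\mathcal{L}_{\beta,c}^3=3c^2H.E^2-c^3E^3=-3c^2L_\beta.Z+c^3Z^2$. Likewise $(-H+aE).(H-cE)^2$, after discarding every monomial involving $H^3$ or $H^2.E$, reduces through the same two numbers to $(c^2+2ac)L_\beta.Z-ac^2Z^2$. Assembling $2\mathcal{L}_{\beta,c}^3+3(-H+aE).\mathcal{L}_{\beta,c}^2$ gives $(6ac-3c^2)L_\beta.Z+(2c^3-3ac^2)Z^2$, and substituting $a=\beta$ or $a=1$ produces exactly the two displayed formulas. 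The only genuinely non-mechanical point is the determination of $K_{\mathcal X}$ and of $\mathcal{D}$, that is, correctly tracking the exceptional multiplicities in the two cases $Z=C$ and $Z\neq C$; once the coefficient $a$ is pinned down, the remainder is a routine expansion in the four intersection numbers.
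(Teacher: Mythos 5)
Your proof is correct and follows essentially the same route as the paper's: expand \eqref{equation:intersection-formula} for $n=2$ using $K_{\mathcal X}=\pi_Z^\star K_{S\times\PP^1}+E_Z$, the case distinction $\mathcal{D}=\pi_Z^\star(C\times\PP^1)-E_Z$ versus $\pi_Z^\star(C\times\PP^1)$, and the intersection numbers of Lemma \ref{IntersectLemma}. The only (harmless, arguably tidier) differences are that you substitute \eqref{equation:L-beta-K} at the start so both cases become $-H+aE$ with a single parameter $a\in\{\beta,1\}$, where the paper substitutes at the end, and that you justify $H^2.E_Z=0$ explicitly via the projection formula where the paper drops that term silently.
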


\begin{proof}
First, using Lemma \ref{IntersectLemma},
$$
\baeq
\mathcal{L}_{\beta,c}^3
&=
\Big((p_S\circ\pi_Z)^\star L_\beta-cE_Z\Big)^3
\\
&=
((p_S\circ\pi_Z)^\star L_\beta)^3-3c\Big((p_S\circ\pi_Z)^\star L_\beta\Big)^2.E_Z+3c^2\Big((p_S\circ\pi_Z)^\star L_\beta\Big).E_Z^2-c^3E_Z^3
\\
&=
(p_S^\star L_\beta)^3+3c^2\Big((p_S\circ\pi_Z)^\star L_\beta\Big).E_Z^2-c^3E_Z^3=3c^2\Big((p_S\circ\pi_Z)^\star L_\beta\Big).E_Z^2-c^3E_Z^3
\\
&=-3c^2(p_S^\star L_\beta).Z-c^3E_Z^3
=-3c^2L_\beta.Z+c^3Z^2.
\eaeq
$$

For the second term in \eqref{equation:intersection-formula}, suppose first that
$Z=C$ (this is only used in the second line in computing $\mathcal D$). Using Lemma \ref{IntersectLemma} and the formula for the canonical bundle and a general divisor 
under a blow-up \cite[p. 187, 476]{GH},
$$
\baeq
\Big(K_{\mathcal{X}}
&-p^\star K_{\mathbb{P}^1}
+(1-\beta)\mathcal{D}\Big).\mathcal{L}_{\beta,c}^2
\cr
&=
\Big(
(p_S\circ\pi_Z)^\star K_{S}
+
E_Z
+
(1-\beta)\pi_Z^\star(C\times\mathbb{P}^1)
-
(1-\beta)E_Z
\Big)
.
\Big(
(p_S\circ\pi_Z)^\star L_\beta
-cE_Z\Big
)^2
\cr
&=
\Big(
(p_S\circ\pi_Z)^\star K_{S}
+
(1-\beta)(p_S\circ\pi_Z)^\star C
+
\beta E_Z
\Big)
.
\Big(
((p_S\circ\pi_Z)^\star L_\beta)^2
-
2cE_Z.(p_S\circ\pi_Z)^\star L_\beta
+
c^2E_Z^2\Big)
\cr
&=
c^2(p_S\circ\pi_Z)^\star K_{S}.E_Z^2
+
(1-\beta)c^2(p_S\circ\pi_Z)^\star C.E_Z^2
-
2\beta c (p_S\circ\pi_Z)^\star L_\beta.E_Z^2
+
\beta c^2 E_Z^3
\cr
&=
-
c^2p_S^\star K_{S}.Z
-
(1-\beta)c^2 \pi_Z^\star C.Z
+
2\beta c L_\beta.Z
-
\beta c^2Z^2
\cr
&=
-
c^2K_{S}.Z
-
(1-\beta)c^2 C.Z
+
2\beta c L_\beta.Z
-
\beta c^2 Z^2.
\cr
\eaeq
$$
If $Z\ne C$, then
$
\mathcal{D}
=
(1-\beta)\pi_Z^\star(C\times\mathbb{P}^1)
$,
so the previous calculation gives
$$
\big(K_{\mathcal{X}}
-p^\star K_{\mathbb{P}^1}
+
(1-\beta)\mathcal{D}
\big)
.
\mathcal{L}_{\beta,c}^2
=
-
c^2K_{S}.Z
-
(1-\beta)c^2 C.Z
+
2c L_\beta.Z
-
c^2 Z^2.
$$

Thus, if $Z=C$, we have
$$
F(\mathcal{X},\mathcal{L}_{\beta,c},\mathcal{D},\beta)
=
2[-3c^2L_\beta.Z+c^3Z^2]
+3[-c^2K_{S}.Z+2\beta cL_\beta.Z-c^2 Z^2],
$$
while if $Z\ne C$, we have
$$
F(\mathcal{X},\mathcal{L}_{\beta,c},\mathcal{D},\beta)
=
2[-3c^2L_\beta.Z+c^3Z^2]
+
3
[
-
c^2K_{S}.Z
-
(1-\beta)c^2C.Z 
+
2cL_\beta.Z
-
c^2 Z^2].
$$
Plugging in \eqref{equation:L-beta-K} now yields the desired
formulas.
\end{proof}

In the next section, we will show how to apply
Proposition~\ref{proposition:formula} to compute
$F(\mathcal{X},\mathcal{D},\mathcal{L}_\beta,\beta)$ in some cases
(cf. Li--Sun \cite[Proposition~3.15, Example~3.16]{LS}). Before doing so, we illustrate
with a simple example.  

\begin{example}
\label{example:F1-rational}
{\rm
Suppose that $S=\mathbb{F}_1$ and $C$
is a smooth rational curve in $|E+F|$, where $F$ is a fiber of the
natural projection $S\to\mathbb{P}^1$, and $E$ is the unique
$-1$-curve in $S$. Then $L_\beta$ is ample for every
$\beta\in(0,1]$. The automorphism
group of the pair $(S,C)$ is reductive
\cite[Proposition 7.1]{CR} so the edge version of 
Matsushima's obstruction
\cite[Theorem 1.12]{CR}
is not applicable. If $Z=C$ or
$Z=E$, then $\epsilon(S,L_\beta,Z)=1+\beta$. 
In addition, if
$Z=C$, Proposition~\ref{proposition:formula} gives
$$
F(\mathcal{X},\mathcal{L}_{\beta,c},\mathcal{D},\beta)=2(1+\beta)(\beta^2+2\beta-2)%
$$
for $c=1+\beta$, so
$F(\mathcal{X},\mathcal{L}_{\beta,c},\mathcal{D},\beta)<0$ for
$\beta<\sqrt{3}-1$. Similarly, if $Z=E$,
Proposition~\ref{proposition:formula} gives
$$
F(\mathcal{X},\mathcal{L}_{\beta,c},\mathcal{D},\beta)
=(1+\beta)(2-\be^2-2\beta).%
$$
for $c=1+\beta$, so
$F(\mathcal{X},\mathcal{L}_{\beta,c},\mathcal{D},\beta)<0$ for
all $\beta>\sqrt3-1$.
}
\end{example}

\section{Maeda's class}
\label{section:maeda-class}

Let $C$ be a smooth curve on a smooth surface $S$.
Suppose that $(S,C)$ is asymptotically log del Pezzo. Put
$$
L_\beta\sim_{\mathbb{R}}-K_{S}-(1-\beta)C,
$$ 
where
$\beta\in(0,1]$ is such that $L_\beta$ is ample.

The following result proves in a unified manner than whenever $-K_S-C$
is ample, Conjecture \ref{conjecture:main} holds. Alternatively, this result
also follows by combining \cite[Proposition 7.1]{CR} with 
\cite{LS} and Example \ref{example:F1-rational}.

\begin{prop}
\label{theorem:Maeda} Suppose that $-K_{S}-C$ is ample. 
Then $(S,C,\be)$ does not admit a KEE metric
for all sufficiently small $\beta$.
\end{prop}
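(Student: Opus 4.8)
The plan is to produce an explicit destabilizing test configuration and then invoke slope instability. Specifically, I would take $Z=C$ and show that $(S,L_\beta,D_\beta)$ is slope unstable with respect to $C$ in the sense of Definition~\ref{definition:slope-unstable} for all sufficiently small $\beta$; the conclusion then follows at once from Corollary~\ref{corollary:slope-unstable}. Throughout I would write $A:=-K_S-C$, which is ample by hypothesis, so that $L_\beta\sim_\RR A+\beta C$ and, by Kleiman's criterion, $A.C>0$.

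First I would pin down the admissible range of the parameter $c$. Since $Z=C$ is already a divisor, the blow-up $\pi_0$ is an isomorphism and $E_Z|_{S_0}$ is a copy of $C$, so by \eqref{espilonXZLEq} the Seshadri constant is $\epsilon(S,C,L_\beta)=\sup\{c>0 : -K_S-(1-\beta+c)C \text{ is ample}\}$. The set of $s$ for which $-K_S-sC$ is ample is an open interval containing $s=1$ (as $A=-K_S-C$ is ample), so its right endpoint satisfies $s_+>1$; hence $\epsilon(S,C,L_\beta)=s_+-1+\beta\ge\delta_0$, where $\delta_0:=s_+-1>0$ is a constant independent of $\beta$. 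This uniform lower bound is the crucial point, since it guarantees room to choose $c$.

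With this in hand I would simply set $c=3\beta$, which lies in $(0,\epsilon(S,C,L_\beta))$ as soon as $\beta<\delta_0/2$, so that $\mathcal{L}_{\beta,c}$ is $p$-ample by Lemma~\ref{lemma:p-ample} and Theorem~\ref{theorem:Berman} applies. Substituting $c=3\beta$ and $Z=C$ into Proposition~\ref{proposition:formula} and using $L_\beta.C=A.C+\beta C^2$ collapses the Futaki invariant to $F(\mathcal{X},\mathcal{L}_{\beta,c},\mathcal{D},\beta)=-9\beta^2\big(A.C-2\beta C^2\big)$. Since $A.C>0$, the bracket is positive for all small $\beta$, so $F<0$, which gives slope instability with respect to $C$ and hence non-existence of a KEE metric.

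The main obstacle, and the only place where the full strength of the hypothesis enters, is the uniform control of the Seshadri constant as $\beta\to0$: it is ampleness of $-K_S-C$ itself, not merely of $L_\beta$ for each fixed small $\beta$, that forces $s_+>1$ and keeps $\epsilon$ bounded away from $0$, leaving room for a choice of $c$ of size comparable to $\beta$. Without this, the destabilizing window could close up in the limit. A cleaner but essentially equivalent variant would push $c$ toward the boundary value $\epsilon=\delta_0+\beta$: letting $\beta\to0$ gives $F\to\delta_0^2\big(2\delta_0 C^2-3A.C\big)$, which is negative because the nef divisor $A-\delta_0 C=-K_S-s_+C$ meets $C$ non-negatively (so $A.C\ge\delta_0 C^2$), after which one retreats slightly into $(0,\epsilon)$ by continuity as in Remark~\ref{remark:slope}. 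I would nonetheless present the $c=3\beta$ computation, as it is self-contained and avoids computing $s_+$.
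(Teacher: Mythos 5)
Your proposal is correct and follows essentially the same route as the paper: both slope-destabilize with respect to $Z=C$ via the deformation to the normal cone, with the crucial input being that ampleness of $-K_S-C$ keeps the Seshadri constant $\epsilon(S,C,L_\beta)=\delta_0+\beta$ uniformly bounded below, after which Proposition~\ref{proposition:formula}, Lemma~\ref{lemma:p-ample} and Theorem~\ref{theorem:Berman} close the argument. The only difference is the parameter choice: the paper fixes $c=\gamma<\delta_0$ independent of $\beta$ (so that $F\to-2\gamma^2<0$), while you take $c=3\beta$ (giving $F=-9\beta^2(A.C-2\beta C^2)<0$); the variant you sketch with $c$ pushed toward $\epsilon$ is in effect the paper's own computation.
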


\bremark
\lb{RationalRemark}
By \cite[Corollary 2.3]{CR} $C$ is rational.
\eremark

\begin{proof}
Pick any positive
$\gamma<\epsilon(S,Z,-K_{S}-C)$. By definition,
$$
-K_{S}-C\sim_{\mathbb{R}}\gamma Z+H
$$
for some ample $\mathbb{R}$-divisor $H$.
Letting $Z:=C$ then
$$
L_\beta\sim_{\mathbb{R}}-K_{S}-C+\beta
C\sim_{\mathbb{R}}\Big(\gamma+\beta\Big)C+H,
$$
which implies that
$\epsilon(S,L_\beta,Z)\ge\gamma+\beta>\gamma$. 

Pick some $c\in(0,\gamma]$. Let us use notation and assumptions of
\S\ref{section:slope-stability-surface}. Then $\mathcal{L}_{\beta,c}$
is $p$-ample by Lemma~\ref{lemma:p-ample}.
By Remark \ref{RationalRemark},
$L_\beta.C=-(K_S+(1-\be)C).C=2+\be C^2$. 
Therefore, using
Proposition~\ref{proposition:formula},
with $c=\gamma$,
$$
\baeq
\mathrm{F}(\mathcal{X},\mathcal{L}_{\beta,c},\mathcal{D},\beta)
&=
-3\gamma^2L_\beta. C+2\gamma^3C^2+\beta(6\gamma L_\beta.C-3\gamma^2C^2)
\cr
&=-\gamma^2L_\beta. C-2\gamma^2(L_\beta-\gamma C).C+\beta
(6\gamma L_\beta.C-3\gamma^2C^2)
\cr
&=-\gamma^2(2+\beta C^2)-2\gamma^2(L_\beta-\gamma C).C
+\beta(6\gamma L_\beta.C-3\gamma^2C^2)\cr
&<-2\gamma^2+\beta(6\gamma L_\beta.C-4\gamma^2C^2).
\eaeq
$$
so $\lim_{\beta\to
0^+}\mathrm{F}(\mathcal{X},\mathcal{L}_{\beta,c},\mathcal{D},\beta)\le
-2\gamma^2<0$.
Thus, Theorem~\ref{theorem:Berman} implies the desired result.
\end{proof}

\begin{remark}
\label{remark:ampleness-Maeda} One cannot drop the ampleness
condition in Proposition~\ref{theorem:Maeda}. Indeed, if $-K_{S}-C$
is not ample, then it follows from the classification in \cite{CR}
and Lemma \ref{lemma:Seshadri-constants}  
(i) below
that $\epsilon(S,L_\beta,C)\le\beta$ so the arguments used
in the proof of Proposition~\ref{theorem:Maeda} are no longer valid.
\end{remark}

\section{Flop-slope stability}
\label{section:flops}

We follow the notation and assumptions of \S\ref{section:slope-stability-surface}. 
In addition, denote by $O_1,\ldots,O_r$,
distinct points on the curve $Z$, and let 
$$
\pi_O\colon S'\to S
$$ 
be the blow-up of the union of these points,
whose exceptional curves are 
$$
C'_1,\ldots, C'_r\subset S',
$$ 
with $\pi_O(C'_i)=O_i$.
Denote by 
$$
C', Z'\subset S'
$$ 
the $\pi_O$-proper transforms of the curves $C,Z\subset S$,
respectively. 
Let 
$$
p_{S'}
\colon
S'\times\mathbb{P}^1\to S',
\q
p_{\mathbb{P}^1}'
\colon
S'\times\mathbb{P}^1\to\mathbb{P}^1,
$$ 
be the natural projections. 
Put
\beq
\lb{LiEq}
L_i:=\{O_i\}\times\mathbb{P}^1\subset S\times\PP^1,
\eeq
and let
$$
\pi_L
\colon
S'\times\mathbb{P}^1\to S\times\mathbb{P}^1
$$ 
be the blow-up of the union of the
smooth disjoint curves $L_1,\ldots, L_r$. From now on, by abuse of
notation, we identify $S$ and $S'$ with the fibers of
$p_{\mathbb{P}^1}$ and $p_{\mathbb{P}^1}'$ over the point 
$[0:1]\in\PP^1$,
respectively. The blow-up $\pi_O\colon S'\to S$ is induced
by the blow-up $\pi_L$. In sum, there exists a commutative diagram:
$$
\xymatrix{
S'\ar@{^{(}->}[d]\ar@{->}[rr]^{\pi_O}&&S\ar@{^{(}->}[d]\\%
S'\times\mathbb{P}^1\ar@{->}[dr]_{p_{\mathbb{P}^1}'}\ar@{->}[rr]^{\pi_L}&& S\times\mathbb{P}^1\ar@{->}[ld]^{p_{\mathbb{P}^1}}\\%
&\mathbb{P}^1&}
$$

Let 
$$
\pi_{Z'}\colon\mathcal{X}'\to S'\times\mathbb{P}^1
$$ 
be the blow-up of the curve $Z'\subset S'\subset S'\times\mathbb{P}^1$, and
let 
$$
E_{Z'}\subset \mathcal{X}'
$$ 
be the $\pi_{Z'}$-exceptional divisor. 
If $Z$ is rational, then $E_{Z'}\cong\mathbb{F}_k$
where $k=|{Z'}^2|$ (to see this recall \eqref{EZEq}).
Denote by 
$$
S'_{0}\subset \mathcal{X}'
$$ 
the $\pi_{Z'}$-proper transform of the surface $S'\subset S'\times\mathbb{P}^1$. 
Put 
$$
p':=p_{\mathbb{P}^1}'\circ\pi_{Z'}:\mathcal{X}'\ra \PP^1.
$$ 
Then,
\beq\lb{SprzeroEq}
S'_0\cong S'
\eeq
and 
$$
S'_0\cup E_{Z'}
$$ 
is the fiber of $p'$ over the
point $[0:1]$ (the ``central fiber"). 
Denote by 
$$
C_1,\ldots,C_r\subset \mathcal{X}'
$$ 
the $\pi_{Z'}$-proper transform on
of the curves $C'_1,\ldots,C'_r\subset S'\subset S'\times\PP^1$, respectively.
Then $C_i\cong\mathbb{P}^1$. 

\begin{lemma}
\label{lemma:normal-bundle} The normal bundle of $C_i$ in
$\mathcal{X}'$ is isomorphic to
$\mathcal{O}_{\mathbb{P}^1}(-1)\oplus
\mathcal{O}_{\mathbb{P}^1}(-1)$.
\end{lemma}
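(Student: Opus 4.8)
The plan is to exploit the chain of smooth subvarieties $C_i\subset S'_0\subset\mathcal{X}'$ (note that $\mathcal{X}'$ is smooth, being the blow-up of the smooth curve $Z'$ in the smooth threefold $S'\times\mathbb{P}^1$, and $S'_0$ is smooth by \eqref{SprzeroEq}) and the associated conormal exact sequence
\[
0\to N_{C_i/S'_0}\to N_{C_i/\mathcal{X}'}\to N_{S'_0/\mathcal{X}'}\big|_{C_i}\to 0 .
\]
Since $C_i\cong\mathbb{P}^1$, both end terms are line bundles on $\mathbb{P}^1$, so the strategy reduces to two degree computations together with a splitting argument: if I can show that each end term has degree $-1$, then the sequence is an extension of $\mathcal{O}_{\mathbb{P}^1}(-1)$ by $\mathcal{O}_{\mathbb{P}^1}(-1)$, which splits because $\mathrm{Ext}^1(\mathcal{O}_{\mathbb{P}^1}(-1),\mathcal{O}_{\mathbb{P}^1}(-1))\cong H^1(\mathbb{P}^1,\mathcal{O}_{\mathbb{P}^1})=0$. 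This immediately yields $N_{C_i/\mathcal{X}'}\cong\mathcal{O}_{\mathbb{P}^1}(-1)\oplus\mathcal{O}_{\mathbb{P}^1}(-1)$.

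For the first term, I would use that $\pi_{Z'}$ is the blow-up of $Z'$, which is a Cartier divisor inside the surface $S'$; hence $\pi_{Z'}$ restricts to an isomorphism $S'_0\cong S'$, which is exactly \eqref{SprzeroEq}, and under this isomorphism $C_i$ corresponds to $C'_i$. Since $C'_i$ is the $\pi_O$-exceptional curve over the point $O_i$, one has $(C'_i)^2=-1$ in $S'$, so $C_i^2=-1$ in $S'_0$ and therefore $N_{C_i/S'_0}\cong\mathcal{O}_{\mathbb{P}^1}(-1)$.

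For the second term I would compute the intersection number $S'_0.C_i$ in $\mathcal{X}'$, since $N_{S'_0/\mathcal{X}'}\big|_{C_i}=\mathcal{O}_{C_i}(S'_0|_{C_i})$ has degree $S'_0.C_i$. Because $C_i$ lies in the central fiber $S'_0\cup E_{Z'}$ of $p'$, the same argument as in \eqref{intersectfiberEq} gives $S'_0.C_i=-E_{Z'}.C_i$, so it remains to check $E_{Z'}.C_i=1$. This is the crux of the argument: $C'_i$ meets the blow-up centre $Z'$ only at the single point lying over $O_i$, and since $C'_i$ and $Z'$ are smooth curves in the surface $S'$ meeting transversally there, $C_i\to C'_i$ is an isomorphism and $C_i$ meets $E_{Z'}$ transversally in exactly one point. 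I would verify this transversality and the multiplicity-one claim in local coordinates on the blow-up, taking $Z'=\{y=z=0\}$, $S'=\{z=0\}$, and $C'_i=\{x=z=0\}$ near $O_i$; in the chart $z=yv$ one finds $C_i=\{x=v=0\}$ and $E_{Z'}=\{y=0\}$ meeting transversally at the origin. This is the one place where care is required, and it gives $E_{Z'}.C_i=1$, hence $N_{S'_0/\mathcal{X}'}\big|_{C_i}\cong\mathcal{O}_{\mathbb{P}^1}(-1)$.

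Combining the two degree computations with the splitting observation from the first paragraph completes the proof. The only genuine obstacle is the local transversality verification in the third step; everything else is formal, namely the conormal sequence, the identification $S'_0\cong S'$, and the vanishing of $H^1(\mathbb{P}^1,\mathcal{O}_{\mathbb{P}^1})$.
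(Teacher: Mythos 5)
Your proof is correct, and it takes a genuinely different route to the splitting type than the paper does. Both arguments share the same skeleton --- the sequence \eqref{sesEq} for $C_i\subset S'_0\subset\mathcal{X}'$ (which, as a remark on terminology, is the normal bundle sequence rather than the conormal one), the identification $N_{C_i|S'_0}=\mathcal{O}_{\mathbb{P}^1}(-1)$ via \eqref{SprzeroEq}, and the transversal intersection $E_{Z'}.C_i=1$ --- but they pin down $N_{C_i|\mathcal{X}'}$ by different mechanisms. The paper first invokes Grothendieck's lemma to write $N_{C_i|\mathcal{X}'}=\mathcal{O}(a)\oplus\mathcal{O}(b)$, computes the total degree $a+b=-K_{\mathcal{X}'}.C_i-2=-2$ from $K_{\mathcal{X}'}=\pi_{Z'}^\star K_{S'\times\mathbb{P}^1}+E_{Z'}$ together with adjunction for $C'_i\subset S'$, deduces from \eqref{sesEq} that the quotient must then be $\mathcal{O}_{\mathbb{P}^1}(-1)$, and finally uses vanishing of $H^0$ of the two outer terms to force $a,b<0$, hence $a=b=-1$. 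You instead compute the quotient term \emph{directly}: $\deg N_{S'_0|\mathcal{X}'}|_{C_i}=S'_0.C_i=-E_{Z'}.C_i=-1$, using the analogue of \eqref{intersectfiberEq} for $p'$ (the central fiber $S'_0+E_{Z'}$ is numerically trivial on curves it contains), and then split the extension of $\mathcal{O}_{\mathbb{P}^1}(-1)$ by $\mathcal{O}_{\mathbb{P}^1}(-1)$ via $\mathrm{Ext}^1\big(\mathcal{O}_{\mathbb{P}^1}(-1),\mathcal{O}_{\mathbb{P}^1}(-1)\big)\cong H^1(\mathbb{P}^1,\mathcal{O}_{\mathbb{P}^1})=0$, thereby bypassing both Grothendieck's lemma and the canonical-class computation. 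Your route is more self-contained, and your local-coordinate verification of $E_{Z'}.C_i=1$ (correct, including the choice of chart $z=yv$: the tangent direction of $C'_i$ at the intersection point is the normal direction $[1:0]$ to $Z'$, so the relevant point of $C_i$ does lie in that chart) makes explicit a transversality that the paper merely asserts. What the paper's detour buys is the identity $K_{\mathcal{X}'}.C_i=0$ from \eqref{intersectionKVCEq}, which is not mere scaffolding: it is reused in the proof of Proposition~\ref{proposition:Futaki-flop-slop} and again in \S\ref{ExampSec} (cf.\ \eqref{AppintersectionEq}), so the paper extracts both the lemma and a later-needed intersection number from one computation.
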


\begin{proof}
Since $C_i$ is rational, by Grothendieck's lemma \cite[p. 516]{GH}
$N_{C_i|\mathcal{X}'}=\mathcal{O}(a)\oplus\mathcal{O}(b)$. Thus,
$$
0\rightarrow T_{C_i}\rightarrow T_{\mathcal{X}'}|_{C_i}
\rightarrow \mathcal{O}(a)\oplus\mathcal{O}(b) \rightarrow 0,
$$
implies (considering the first Chern classes) that 
\beq
\lb{intersectionKVCEq}
a+b+2-2g(C_i)
=
c_1(\mathcal{X}').C_i=-K_{\mathcal{X}'}.C_i.
\eeq
Note that
$C_i.E_{Z'}=1$ since $C'_i$ and $Z'$ intersect transversally
at one point downstairs (in $S'\subset S'\times\PP^1$).
In addition, 
$K_{\mathcal{X}'}=\pi_{Z'}^\star K_{S'\times\mathbb{P}^1}+E_{Z'}$. Thus,
$$
K_{\mathcal{X}'}.C_i
=\pi_{Z'}^\star K_{S'\times\mathbb{P}^1}.C_i+1
=K_{S'\times\mathbb{P}^1}.C_i'+1
=K_{S'\times}.C_i'+1=2g(C_i')-2-(C_i')^2+1=0.
$$
Thus, from \eqref{intersectionKVCEq} we conclude that $a+b=-2$.
Next,
\beq
\lb{sesEq}
0\rightarrow N_{C_i|S'_{0}}\rightarrow N_{C_i|\mathcal{X}'}
\rightarrow N_{S'_0|\mathcal{X}'}|_{C_i} \rightarrow 0.
\eeq
Observe that $N_{C_i|S'_0}=\mathcal{O}_{\PP^1}(-1)$ since $C_i$ is a $-1$-curve in $S'_0$. Thus, taking first Chern classes
and using the previous paragraph, we must have
$N_{S'_0|\mathcal{X}'}|_{C_i}=\mathcal{O}_{\PP^1}(-1)$. The long
exact sequence associated to \eqref{sesEq} gives
$$
0= H^0(\PP^1,\mathcal{O}_{\PP^1}(-1))
\ra H^0(\PP^1,\mathcal{O}_{\PP^1}(a)\oplus \mathcal{O}_{\PP^1}(b))
\ra 
H^0(\PP^1,\mathcal{O}_{\PP^1}(-1))=0,
$$
implying that $a,b<0$; thus, $a=b=-1$.
\end{proof}

Thus, as described in Appendix~\ref{appendix}, we can simultaneously flop 
the curves $C_1,\ldots,C_r\subset \mathcal{X}'$. 
Denote this composition of simple
flops by $f\colon \mathcal{X}'\to
\hat{\mathcal{X}}'$. Moreover, there exists a surjective morphism
$$
\hat{p}'\colon \hat{\mathcal{X}}'\to\mathbb{P}^1
$$ 
that makes the diagram
$$
\xymatrix{
&&\mathcal{X}'\ar@{->}[dd]_{p'}\ar@{->}[dl]_{\pi_{Z'}}\ar@{-->}[rr]^{f}&& \hat{\mathcal{X}}'\ar@{->}[dd]^{\hat{p}'}\\%
&S'\times\mathbb{P}^1\ar@{->}[dr]_{p'_{\mathbb{P}^1}}&&&\\%
&&\mathbb{P}^1\ar@{=}[rr]&&\mathbb{P}^1&}
$$ %
commute. Note that $\hat{p}'$ is flat 
\cite[Proposition~9.7]{Har77}. 
Let us show how to obtain $\hat{\mathcal{X}}'$ even more
explicitly by blowing up the threefold $\mathcal{X}$. This will
also show that $\hat{\mathcal{X}}'$ is projective.

\begin{remark}
\label{remark:Fm} Recall from
\S\ref{section:slope-stability-surface} that we have a blow up
$\pi_Z\colon\mathcal{X}\to S\times\mathbb{P}^1$ of the curve
$Z\subset S\subset S\times\mathbb{P}^1$, and we denoted the
$\pi_Z$-exceptional divisor by $E_{Z}$. If $Z$ is rational, then
$E_{Z}\cong\mathbb{F}_{|Z^2|}$.
\end{remark}

Denote by 
$$
\tilde{L}_1,\ldots,\tilde{L}_r\subset \mathcal{X}
$$ 
the $\pi_Z$-proper transforms
of the curves $L_1,\ldots,L_r$ (defined in \eqref{LiEq}).
Then, each $\tilde{L}_i$ intersects $E_{Z}$ in a
unique point, because each curve $L_i$ intersects the curve $Z$
transversally by the point $O_i$. Then there exists a birational
morphism 
$$
\pi_{\tilde{L}}\colon\hat{\mathcal{X}}'\to\mathcal{X}
$$
that is in fact the blow-up of the union of disjoint smooth curves
$\tilde{L}_1\cup\ldots\cup\tilde{L}_r$. In particular, the
threefold $\hat{\mathcal{X}}'$ is projective.

Denote by 
$$
\hat{C}_1,\ldots,\hat{C}_r\subset \hat{\mathcal{X}}'
$$  
the
$\pi_{\tilde{L}}$-proper transform of the fibers of the morphism
$\pi_Z\vert_{E_{Z}}\colon E_{Z}\to Z$ over the points
$O_1,\ldots,O_r$ in $Z$, respectively. Then there exists
a commutative diagram,
\begin{equation}
\label{equation:main-diagram} \xymatrix{
&&\mathcal{X}'\ar@{->}[dddl]_{\pi_{Z'}}\ar@{-->}[rr]^{f}\ar@{->}[dr]_{c_C}&&
\hat{\mathcal{X}}'\ar@{->}[dl]^{c_{\hat C}}\ar@{->}[dd]^{\pi_{\tilde L}}\ar@/^5pc/@{->}[ddddd]^{p}\\%
&&&\overline{\mathcal{X}}\ar@/_1pc/@{->}[ddr]_{q}&&&&\\%
&&&&\mathcal{X}\ar@{->}[d]^{\pi_Z}&&&\\%
&S'\times\mathbb{P}^1
\ar@{->}[dr]^{p_{S'}}\ar@{->}[ddr]_{p_{\mathbb{P}^1}'}\ar@{->}[rrr]^{\pi_L}&&&
S\times\mathbb{P}^1\ar@{->}[dl]^{p_{S}}\ar@{->}[dd]^{p_{\mathbb{P}^1}}&&&\\%
&&S'\ar@{->}[r]^{\pi_O}&S&&&&\\
&&\mathbb{P}^1\ar@{=}[rr]&&\mathbb{P}^1&&&}
\end{equation} %
such that 
$q$ is the blow-up of the (singular curve)
$Z+L_1+\cdots+L_r$, $c_C$ is the contraction of the curves
$C_1,\ldots,C_r$ to the $r$ singular points (ordinary double
points) of the threefold $\bar{\mathcal{X}}$, $c_{\hat C}$
contracts the curves $\hat{C}_1,\ldots,\hat{C}_r$ on the threefold
$\hat{\mathcal{X}}'$ to the same points.

Recall from \S\ref{section:slope-stability-surface} that $S$ is
equipped with an ample divisor $L_\beta$. Let $L'_\beta$ be an
ample $\mathbb{R}$-divisor on the surface $S'$ such that
\begin{equation}
\label{equation:L-prime-L}
L'_\beta\sim_{\mathbb{R}} \pi_O^*(L_\beta)-\sum_{i=1}^r \delta_i C'_i%
\end{equation}
for some real numbers $\delta_1,\ldots,\delta_r$. Then all numbers
$\delta_1,\ldots,\delta_r$ must be positive. Denote by
$\epsilon(S',Z',L'_\beta)$ the Seshadri constant of $(S',Z')$ with
respect to $L'_\beta$. Denote by $\tau(S',Z',L'_\beta)$ the
pseudoeffective threshold of $(S',Z')$ with respect to $L'_\beta$,
i.e. the number
$$
\sup\{c>0\,:\, L'_\beta-cZ' \h{\rm \ is big}\}.%
$$
Let $c$ be a positive real number.

\begin{lemma}
\label{lemma:Seshadri-constants}  
(i)
One has
$\epsilon(S,Z,L_\beta)\ge\epsilon(S',Z',L'_\beta)$ and
$\epsilon(S',Z',L'_\beta)\le \delta_i$ for every $i$.

\noindent
(ii)
If $c<\epsilon(S,Z,L_\beta)$ and $c\ge\delta_i$
for every $i$, then the divisor $L'_\beta-cZ'$ is big and, in
particular, $\tau(S',Z',L'_\beta)>\epsilon(S',Z',L'_\beta)$.
\end{lemma}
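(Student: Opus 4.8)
The plan is to reduce both parts to a single linear-equivalence identity on $S'$ and then split into two regimes according to the sign of $c-\delta_i$. First I would record the numerics of the blow-up $\pi_O\colon S'\to S$. Since $Z$ is smooth and each $O_i$ lies on $Z$, the $O_i$ are smooth points of $Z$, so the pullback picks up multiplicity one: $\pi_O^\star Z=Z'+\sum_{i=1}^rC_i'$. Together with $(C_i')^2=-1$, $C_i'\cdot C_j'=0$ for $i\ne j$, and $\pi_O^\star(\,\cdot\,)\cdot C_i'=0$, the defining relation \eqref{equation:L-prime-L} gives $L'_\beta\cdot C_i'=\delta_i$ (whence $\delta_i>0$ by ampleness of $L'_\beta$) and $Z'\cdot C_i'=1$, so that $(L'_\beta-cZ')\cdot C_i'=\delta_i-c$. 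Recalling that for a divisor $Z$ the Seshadri constant of \eqref{espilonXZLEq} is just $\epsilon(S,Z,L_\beta)=\sup\{c:L_\beta-cZ\text{ ample}\}$, the identity I would use throughout is
$$
\pi_O^\star(L_\beta-cZ)=(L'_\beta-cZ')+\sum_{i=1}^r(\delta_i-c)C_i'.
$$

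The second inequality in (i) is then immediate: if $L'_\beta-cZ'$ is ample, then $(L'_\beta-cZ')\cdot C_i'=\delta_i-c>0$, so $\epsilon(S',Z',L'_\beta)\le\delta_i$ for every $i$. For the first inequality I would run the Nakai--Moishezon criterion for $\RR$-divisors downstairs. Fix $c<\epsilon(S',Z',L'_\beta)$, so that $L'_\beta-cZ'$ is ample and, by the previous step, $c<\delta_i$ for all $i$. For any curve $\Gamma\subset S$ with proper transform $\Gamma'$, the projection formula and the identity give $(L_\beta-cZ)\cdot\Gamma=(L'_\beta-cZ')\cdot\Gamma'+\sum_i(\delta_i-c)(C_i'\cdot\Gamma')>0$, since $(L'_\beta-cZ')\cdot\Gamma'>0$, $\delta_i-c>0$, and $C_i'\cdot\Gamma'\ge0$ (as $\Gamma'\ne C_i'$); squaring the identity gives $(L_\beta-cZ)^2=(L'_\beta-cZ')^2+\sum_i(\delta_i-c)^2>0$. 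Hence $L_\beta-cZ$ is ample, so $c\le\epsilon(S,Z,L_\beta)$, proving $\epsilon(S,Z,L_\beta)\ge\epsilon(S',Z',L'_\beta)$.

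For (ii) I would exploit the complementary sign regime. Assume $c\ge\delta_i$ for every $i$ and $c<\epsilon(S,Z,L_\beta)$, so that $L_\beta-cZ$ is ample on $S$; then $\pi_O^\star(L_\beta-cZ)$ is nef with $(\pi_O^\star(L_\beta-cZ))^2=(L_\beta-cZ)^2>0$, hence nef and big, and by Kodaira's lemma it is a sum of an ample and an effective $\RR$-divisor. Rewriting the identity as $L'_\beta-cZ'=\pi_O^\star(L_\beta-cZ)+\sum_i(c-\delta_i)C_i'$ and using $c-\delta_i\ge0$ exhibits $L'_\beta-cZ'$ again as a sum of an ample and an effective $\RR$-divisor, hence big. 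Finally, since the big cone is open, $L'_\beta-(c+\eta)Z'$ stays big for small $\eta>0$, so $\tau(S',Z',L'_\beta)>c$; combining with $c\ge\delta_i\ge\epsilon(S',Z',L'_\beta)$ from (i) yields $\tau(S',Z',L'_\beta)>\epsilon(S',Z',L'_\beta)$.

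The main obstacle is conceptual rather than computational: recognizing that the two parts are governed by the sign of $c-\delta_i$, so that one and the same identity proves ampleness downstairs when $c<\delta_i$ (via Nakai--Moishezon) and bigness upstairs when $c\ge\delta_i$ (via the nef-and-big-plus-effective decomposition). The only points needing care are the multiplicity-one computation $\pi_O^\star Z=Z'+\sum_iC_i'$, which uses smoothness of $Z$ at the $O_i$, and the appeal to openness of the big cone to upgrade $\tau\ge c$ to the strict inequality $\tau>\epsilon$.
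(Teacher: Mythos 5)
Your proof is correct and follows essentially the same route as the paper: the second inequality in (i) via $L'_\beta.\,C_i'=\delta_i$ and $Z'.\,C_i'=1$, and part (ii) via the identity $L'_\beta-cZ'\sim_{\mathbb{R}}\pi_O^\star(L_\beta-cZ)+\sum_{i=1}^r(c-\delta_i)C_i'$, exhibiting a big divisor as (pullback of ample) plus effective. The only difference is that where the paper dismisses the first inequality in (i) as obvious, you verify it honestly via Nakai--Moishezon for $\RR$-divisors using $(L_\beta-cZ)\cdot\Gamma=(L'_\beta-cZ')\cdot\Gamma'+\sum_i(\delta_i-c)(C_i'\cdot\Gamma')$ and $(L_\beta-cZ)^2=(L'_\beta-cZ')^2+\sum_i(\delta_i-c)^2$, and you justify the strictness $\tau>\epsilon$ by openness of the big cone; both fillings-in are correct.
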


\begin{proof}
The inequality $\epsilon(S,Z,L_\beta)\ge
\epsilon(S',Z',L'_\beta)$ is obvious. The inequality
$\epsilon(S',Z',L'_\beta)\le \delta_i$ follows from
$L'_\beta. C'_i=\delta_i$ and $Z'. C'_i=1$. Suppose that
$c<\epsilon(S,Z,L_\beta)$. Then $L_\beta-cZ$ is ample. 
Since
$$
L'_\beta-cZ'\sim_{\mathbb{R}}
\pi_O^*\big(L_\beta-cZ\big)+\sum_{i=1}^r (c-\delta_i) C'_i,
$$
we see that the divisor $L'_\beta-cZ'$ is big provided that
$c\ge\delta_i$ for every $i$.
\end{proof}

Let $\mathcal{D}'$ be the proper transform of the divisor
$\mathcal{D}$ on $\mathcal{X}'$. Put
\beq
\lb{Lbetaprime}
\mathcal{L}_\beta':=(p_{S'}\circ\pi_{Z'})^\star(
L_\beta')-cE_{Z'}. 
\eeq
If $c<\epsilon(S',Z',L'_\beta)$, then
$\mathcal{L}_\beta'$ is $p'$-ample by Lemma~\ref{lemma:p-ample}.

\begin{remark}
\label{remark:DNC-blow-up} If $\mathcal{L}_\beta'$ is $p'$-ample,
then the triple $(\mathcal{X}',\mathcal{L}_\beta',\mathcal{D}')$
is the test configuration obtained via deformation to the normal
cone of $Z'$ in $S'$.
\end{remark}

\bdefin
\lb{RDef}
Denote by $R'\subset S'\times \PP^1, 
R_{\mathcal{X}'}\subset\mathcal{X}'$, and $R_{\hat{\mathcal{X}}'}
\subset \hat{\mathcal{X}}'$
the proper transforms of the 
surface $Z\times\mathbb{P}^1\subset S\times \mathbb{P}^1$
with respect to the maps $\pi_L, \pi_L\circ\pi_{Z'}$,
and $\pi_{\tilde L}\circ \pi_Z$, respectively.
\edefin

\begin{lemma}
\label{lemma:flop-slope-Seshadri-0} Suppose that
$\epsilon(S',Z',L'_\beta)<c<\epsilon(S,Z,L_\beta)$ and
$c\ge\delta_i$ for every $i$. Then $\mathcal{L}_\beta'$ is
$p'$-big. Moreover, the curves $C_1,\ldots,C_r$ are the only
curves in $\mathcal{X}'$ that are mapped by $p'$ to points and
have negative intersections with $\mathcal{L}_\beta'$.
\end{lemma}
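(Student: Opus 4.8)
The plan is to prove the two assertions separately, and for the negative--curve classification to mimic the case analysis in the proof of Lemma~\ref{lemma:p-ample}, using \eqref{intersectfiberEq} throughout. For $p'$--bigness the key point is that the general fiber of $p'$ is a copy of $S'$ on which $E_{Z'}$ is not supported, so $\mathcal{L}_\beta'$ restricts there to the ample divisor $L_\beta'$ (equivalently, by \eqref{SprzeroEq} and Lemma~\ref{lemma:Seshadri-constants}(ii), $\mathcal{L}_\beta'|_{S_0'}=L_\beta'-cZ'$ is big precisely because $c<\epsilon(S,Z,L_\beta)$ and $c\ge\delta_i$). I would then invoke the standard fact that a divisor big on the generic fiber of a projective morphism to $\mathbb{P}^1$ is $p'$--big: after replacing $L_\beta'$ by a nearby ample $\mathbb{Q}$--divisor, one pushes forward and estimates
\[
h^0\big(\mathcal{X}',m(\mathcal{L}_\beta'+N(p')^\star(\mathrm{pt}))\big)=h^0\big(\mathbb{P}^1,(p')_*\mathcal{O}(m\mathcal{L}_\beta')\otimes\mathcal{O}_{\mathbb{P}^1}(mN)\big),
\]
and since $(p')_*\mathcal{O}(m\mathcal{L}_\beta')$ has generic rank $h^0(S',mL_\beta')\sim\tfrac12(L_\beta')^2 m^2$, twisting by $mN$ for fixed large $N$ yields $\gtrsim \tfrac{N}{2}(L_\beta')^2\,m^3$ sections, the maximal growth rate $m^{\dim\mathcal{X}'}$. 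Hence $\mathcal{L}_\beta'+N(p')^\star(\mathrm{pt})$ is big and $\mathcal{L}_\beta'$ is $p'$--big.

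For the classification I would dispose first of the curves away from $S_0'$. If $C$ is not contained in the central fiber $S_0'\cup E_{Z'}$, then $\mathcal{L}_\beta'.C=L_\beta'.(p_{S'}\circ\pi_{Z'})(C)>0$. If $C$ is a fiber of the ruling $E_{Z'}\to Z'$, then $\mathcal{L}_\beta'.C=-cE_{Z'}.C\ge c>0$ because $-E_{Z'}$ restricts to the hyperplane bundle on each ruling. If $C\subset E_{Z'}$ is not such a fiber and $C\not\subset S_0'$, then $(p_{S'}\circ\pi_{Z'})(C)=Z'$ and, using \eqref{intersectfiberEq}, $\mathcal{L}_\beta'.C=L_\beta'.(p_{S'}\circ\pi_{Z'})(C)+cS_0'.C>0$. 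In each of these cases the intersection is strictly positive, so any negative curve must lie in $S_0'$.

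The crux is therefore the case $C\subset S_0'\cong S'$, where $\mathcal{L}_\beta'.C=(L_\beta'-cZ').C$ is computed on $S'$. Writing $Z'=\pi_O^\star Z-\sum_{i=1}^r C_i'$ and substituting \eqref{equation:L-prime-L}, I would record the decomposition
\[
L_\beta'-cZ'\sim_{\mathbb{R}}\pi_O^\star(L_\beta-cZ)+\sum_{i=1}^r(c-\delta_i)C_i',
\]
in which the first summand is nef (indeed big) because $c<\epsilon(S,Z,L_\beta)$, and the second is effective because $c\ge\delta_i$. Since the $C_i'$ are pairwise disjoint $(-1)$--curves orthogonal to $\pi_O^\star(L_\beta-cZ)$, a curve can meet $L_\beta'-cZ'$ negatively only if it is one of the $C_i'$, and there $(L_\beta'-cZ').C_i'=(c-\delta_i)(C_i')^2=\delta_i-c\le 0$. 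Under the isomorphism $S_0'\cong S'$ the curve $C_i'$ corresponds to $C_i$, so $C_1,\dots,C_r$ are exactly the curves of $S_0'$, and hence of $\mathcal{X}'$, on which $\mathcal{L}_\beta'$ is non-positive.

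The main obstacle is bookkeeping rather than conceptual: one must check carefully that the only curves pairing negatively with an effective sum of disjoint $(-1)$--curves are those $(-1)$--curves themselves, which rests on the nefness of $\pi_O^\star(L_\beta-cZ)$, i.e.\ on the hypothesis $c<\epsilon(S,Z,L_\beta)$. I would also flag the boundary case $c=\delta_i$, in which $\mathcal{L}_\beta'.C_i=0$ rather than strictly negative; thus the sharp statement is that the negative curves are exactly those $C_i$ with $c>\delta_i$, while any $C_i$ with $c=\delta_i$ has zero intersection.
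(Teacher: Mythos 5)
Your proof is correct, and the classification of negative curves is essentially the paper's own argument: after the same fiberwise case analysis as in Lemma~\ref{lemma:p-ample} (curves off the central fiber, ruling fibers of $E_{Z'}$, curves in $E_{Z'}$ handled via \eqref{intersectfiberEq}), everything reduces to a curve in $S_0'\cong S'$, where you use exactly the paper's decomposition $L'_\beta-cZ'\sim_{\mathbb{R}}\pi_O^\star(L_\beta-cZ)+\sum_{i=1}^r(c-\delta_i)C'_i$ (pullback of an ample divisor plus an effective sum of disjoint $(-1)$-curves) to pin the negative curves down to the $C_i$. Where you genuinely diverge is the bigness claim. The paper proves $p'$-bigness by a one-line decomposition upstairs, $\mathcal{L}_\beta'\sim_{\mathbb{R}}(p_{S'}\circ\pi_{Z'})^\star(L'_\beta-cZ')+cR_{\mathcal{X}'}$, i.e.\ the pullback of a divisor that is big by Lemma~\ref{lemma:Seshadri-constants}(ii) plus the effective divisor $cR_{\mathcal{X}'}$ of Definition~\ref{RDef}; this stays within elementary intersection theory, uses the hypotheses $c<\epsilon(S,Z,L_\beta)$ and $c\ge\delta_i$, and the same decomposition is recycled in the proof of Lemma~\ref{lemma:flop-slop-ample}. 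You instead observe that the generic fiber of $p'$ misses $E_{Z'}$, so $\mathcal{L}_\beta'$ restricts there to the ample $L'_\beta$, and invoke the standard fact that bigness on the generic fiber gives relative bigness over $\PP^1$; this is valid and actually proves more (namely $p'$-bigness for every $c>0$, with no hypotheses at all), at the cost of importing a nontrivial general fact. Two small points in your write-up deserve care. First, the parenthetical ``equivalently, $\mathcal{L}_\beta'|_{S_0'}=L'_\beta-cZ'$ is big'' conflates the central-fiber component $S_0'$ with the generic fiber: relative bigness is governed by the generic fiber, and bigness of the restriction to a component of the special fiber is neither necessary nor by itself sufficient, so this ``equivalence'' should be dropped. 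Second, your section count is incomplete as stated: the summands $\mathcal{O}_{\PP^1}(a_i)$ of $(p')_*\mathcal{O}(m\mathcal{L}_\beta')$ could a priori have degrees tending to $-\infty$ at order $m^3$, which would swamp the twist by $\mathcal{O}_{\PP^1}(mN)$; the clean proof of the standard fact fixes an ample $A$ on $\mathcal{X}'$, notes $h^0$ of $(m\mathcal{L}_\beta'-A)$ restricted to the generic fiber is nonzero for $m\gg0$, so the pushforward is a nonzero torsion-free sheaf whose twist by $\mathcal{O}_{\PP^1}(k)$, $k\gg0$, has sections, whence $m\mathcal{L}_\beta'+kF$ dominates $A$ plus an effective divisor and is big. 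Finally, your boundary remark is correct and consistent with the statement: when $c=\delta_i$ one gets $\mathcal{L}_\beta'.C_i=\delta_i-c=0$, and the lemma only asserts the containment of the negative curves among the $C_i$, which is exactly what both your argument and the paper's establish.
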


\begin{proof}
One has
$$
\baeq
\mathcal{L}_\beta'
&\sim_{\mathbb{R}}(p_{S'}\circ\pi_{Z'})^\star(L'_\beta)-cE_{Z'}
\\
&\sim_{\mathbb{R}}(p_{S'}\circ\pi_{Z'})^\star (L'_\beta-cZ')+c(p_{S'}\circ\pi_{Z'})^\star (Z')-cE_{Z'}\\
&\sim_{\mathbb{R}}(p_{S'}\circ\pi_{Z'})^\star (L'_\beta-cZ)
+
c\pi_{Z'}^\star (R')
-cE_{Z'}
\\
&\sim_{\mathbb{R}}(p_{S'}\circ\pi_{Z'})^\star (L'_\beta-cZ)+cR_{\mathcal{X}'}.%
\eaeq
$$
Since $L'_\beta-cZ'$ is big by
Lemma~\ref{lemma:Seshadri-constants}, we see that
$(p_{S'}\circ\pi_{Z'})^\star L'_\beta-cE_{Z'}$ is
$p_{\mathbb{P}^1}\circ\pi$-big.

Let $\Gamma$ be an irreducible curve in $\mathcal{X}'$ such that
$p(\Gamma)$ is the point $[0:1]$ and
$\mathcal{L}_\beta'.\Gamma<0$. Let us show that $\Gamma$ is
one of the curves $C_1,\ldots,C_r$. If $\pi_{Z'}(\Gamma)$ is a
point, then
$$
\mathcal{L}_\beta'.\Gamma=\Big((p_{S'}\circ\pi_{Z'})^\star(L'_\beta)-cE_{Z'}\Big).\Gamma=-cE_{Z'}.\Gamma>0.%
$$
So, $\pi_{Z'}(\Gamma)$ is not a
point. Thus, if $\Gamma\subset E_{Z'}$, then
$$
0>\mathcal{L}_\beta'.\Gamma=\Big((p_{S'}\circ\pi_{Z'})^\star(L'_\beta)-cE_{Z'}\Big).\Gamma\ge L'_\beta. Z'-cE_{Z'}.\Gamma=L'_\beta. Z'+cS_0'.\Gamma>cS_0'.\Gamma,%
$$
which implies that $S_0'.\Gamma<0$. Thus, $\Gamma\subset
S_0'$. Then,
$$
\mathcal{L}_\beta'.\Gamma=\Big((p_{S'}\circ\pi_{Z'})^\star(L'_\beta)-cE_{Z'}\Big).\Gamma=(L'_\beta-cZ').\Gamma,
$$
where we used that $S_0'\cong S'$. On the other hand,
\eqref{equation:L-prime-L} gives
$$
L'_\beta-cZ'\sim_{\mathbb{R}}
\pi_O^*\Big(L_\beta-cZ\Big)+\sum_{i=1}^r (c-\delta_i) C'_i,
$$
where $L_\beta-cZ$ is ample on $S$. Since  $c\ge\delta_i$
for every $i$ by assumption, we see that the curve $\Gamma$ must
be one of the curves $C_1',\ldots,C_r'$.
\end{proof}

A sufficient condition for the $\hat{p}'$-ampleness of the divisor
$\hat{\mathcal{L}}_\beta'$ is given by

\begin{lemma}
\label{lemma:flop-slop-ample} Suppose that
$\epsilon(S',Z',L'_\beta)<c<\epsilon(S,Z,L_\beta)$ and
$c\ge\delta_i$ for every $i$. Then
$\hat{\mathcal{L}}_\beta'$ is $\hat{p}'$-ample.
\end{lemma}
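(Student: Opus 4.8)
The plan is to prove $\hat p'$-ampleness via the relative Kleiman criterion: exhibit a uniform positive lower bound for $\hat{\mathcal{L}}_\beta'.\Gamma$ over all irreducible curves $\Gamma$ contracted by $\hat p'$. The device that makes this tractable is the explicit realization, from diagram \eqref{equation:main-diagram}, of $\pi_{\tilde L}\colon\hat{\mathcal{X}}'\to\mathcal{X}$ as the blow-up of the disjoint smooth curves $\tilde L_1,\ldots,\tilde L_r$; denote by $G_1,\ldots,G_r$ its exceptional divisors. Since $c<\epsilon(S,Z,L_\beta)$, Lemma~\ref{lemma:p-ample} guarantees that $\mathcal{L}_{\beta,c}=(p_S\circ\pi_Z)^\star L_\beta-cE_Z$ is $p$-ample on $\mathcal{X}$, and this is the positivity I intend to transport upstairs.

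First I would establish the identity
$$
\hat{\mathcal{L}}_\beta'\sim_{\RR}\pi_{\tilde L}^\star\mathcal{L}_{\beta,c}-\sum_{i=1}^r\delta_i G_i .
$$
The coefficient $\delta_i$ is pinned down by intersecting against $\hat C_i$: since $\hat C_i$ is the flop of the $(-1,-1)$-curve $C_i$ (Lemma~\ref{lemma:normal-bundle}), one has $\hat{\mathcal{L}}_\beta'.\hat C_i=-\mathcal{L}_\beta'.C_i=c-\delta_i$, using $\mathcal{L}_\beta'.C_i=L'_\beta.C'_i-cE_{Z'}.C_i=\delta_i-c$; and $\pi_{\tilde L}(\hat C_i)$ is the fiber $\phi_i$ of $E_Z\to Z$ over $O_i$, for which $\pi_{\tilde L}^\star\mathcal{L}_{\beta,c}.\hat C_i=\mathcal{L}_{\beta,c}.\phi_i=c$, while $G_i.\hat C_i=1$ and $G_j.\hat C_i=0$ for $j\ne i$. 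Full verification then proceeds by checking the identity on a generating set of $N^1(\hat{\mathcal{X}}'/\PP^1)$, or equivalently by pulling $\mathcal{L}_\beta'$ through the flop and $\pi_{\tilde L}$ directly.

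With this in hand I would run the case analysis on an irreducible $\Gamma$ with $\hat p'(\Gamma)$ a point. If $\Gamma$ is not in the central fiber, then $\hat{\mathcal{L}}_\beta'.\Gamma=L_\beta.(p_S\circ\pi_Z\circ\pi_{\tilde L})(\Gamma)\ge\gamma_0$ as in Lemma~\ref{lemma:p-ample}. If $\Gamma$ is contracted by $\pi_{\tilde L}$, it is a fiber of some $G_i\to\tilde L_i$, whence $\pi_{\tilde L}^\star\mathcal{L}_{\beta,c}.\Gamma=0$, $G_i.\Gamma=-1$, and $\hat{\mathcal{L}}_\beta'.\Gamma=\delta_i>0$. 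If $\Gamma$ lies in the strict transform of $S_0$, then because the point $P_i:=\tilde L_i\cap E_Z$ lies off $S_0$ (it is the point of $E_Z$ cut out by the $\PP^1$-direction, transverse to $S\times\{[0:1]\}$, whereas $E_Z\cap S_0$ records directions tangent to that fiber), one has $G_i.\Gamma=0$ for all $i$, and $\hat{\mathcal{L}}_\beta'.\Gamma=\mathcal{L}_{\beta,c}.\pi_{\tilde L}(\Gamma)\ge\gamma_1>0$, again by Lemma~\ref{lemma:p-ample}.

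The one remaining, and genuinely hard, case is $\Gamma$ whose image $\Gamma_0:=\pi_{\tilde L}(\Gamma)$ lies in the Hirzebruch surface $E_Z\cong\FF_{|Z^2|}$ (Remark~\ref{remark:Fm}). Here the identity gives
$$
\hat{\mathcal{L}}_\beta'.\Gamma=\mathcal{L}_{\beta,c}.\Gamma_0-\sum_{i=1}^r\delta_i\,m_i ,\qquad m_i:=\mult_{P_i}\Gamma_0,
$$
and one must bound the right-hand side below by a positive constant, i.e.\ prove on $E_Z$ that $\mathcal{L}_{\beta,c}|_{E_Z}.\Gamma_0>\sum_i\delta_i m_i$ for every curve $\Gamma_0$. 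I expect this to be the crux. The fiber case $\Gamma_0=\phi_i$ already saturates the estimate ($\mathcal{L}_{\beta,c}.\phi_i=c\ge\delta_i$), so the inequality is sharp and must consume the full strength of the hypotheses $\delta_i\le c<\epsilon(S,Z,L_\beta)$ together with $c>\epsilon(S',Z',L'_\beta)$ and the ampleness of $L'_\beta$ on $S'$. Concretely, I would pass to the strict transform $\hat E$ of $E_{Z'}$, identify $\hat{\mathcal{L}}_\beta'|_{\hat E}$ with $\mathcal{L}_\beta'|_{E_{Z'}}$ under the flop, and compute in the Picard group of $\hat E\cong\mathrm{Bl}_{P_1,\ldots,P_r}\FF_{|Z^2|}$ (using the explicit classes of the section $\sigma$ and fiber $\phi$) that this restriction is ample; Nakai--Moishezon on $\hat E$ then yields positivity on all curves of $E_Z$. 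This reduction of a threefold relative-ampleness statement to a computation on a rational ruled surface is where the real work lies, and where the precise numerical hypotheses of the lemma are exactly used up.
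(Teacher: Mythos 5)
Your setup coincides with the paper's: the identity $\hat{\mathcal{L}}_\beta'\sim_{\RR}\pi_{\tilde L}^\star\mathcal{L}_{\beta,c}-\sum_{i=1}^r\delta_i G_i$ is (since $\tilde L_i\not\subset E_Z$, so $\pi_{\tilde L}^\star E_Z=\hat E_Z$) exactly the paper's \eqref{equation:lemma:flop-slop-ample} with $G_i=\hat F_i$, and your treatment of curves off the central fiber, of fibers of $G_i\to\tilde L_i$, and of curves in $\hat S_0$ matches the paper's first three cases. But the case you yourself identify as the crux --- $\Gamma$ with $\pi_{\tilde L}(\Gamma)\subset E_Z$ --- is left as a plan rather than a proof, and the plan contains an error: the identification of $\hat{\mathcal{L}}_\beta'|_{\hat E}$ with $\mathcal{L}_\beta'|_{E_{Z'}}$ ``under the flop'' is false, because the flop restricted to these surfaces is not an isomorphism. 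Indeed $\hat E_Z$ is the blow-up $g\colon\hat E_Z\to E_{Z'}$ of $E_{Z'}$ at the $r$ points $C_i\cap E_{Z'}$, with exceptional curves precisely the $\hat C_i$, and one computes $\hat{\mathcal{L}}_\beta'|_{\hat E_Z}=g^\star\bigl(\mathcal{L}_\beta'|_{E_{Z'}}\bigr)-\sum_{i=1}^r(c-\delta_i)\hat C_i$; a pullback class is never ample (it vanishes on the $\hat C_i$), so the correction term carries the entire difficulty. Moreover, even with the correct class, ``compute in the Picard group that the restriction is ample'' via Nakai--Moishezon is not a routine calculation: it requires knowing all irreducible curves (the Mori cone) of the blown-up ruled surface, which is essentially the original problem restated; and your model $\hat E\cong\mathrm{Bl}_{P_1,\ldots,P_r}\FF_{|Z^2|}$ presumes $Z$ rational, an assumption the lemma does not make.

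The paper closes this case with one further rewriting that your argument is missing. Since $\hat E_Z\cup\bigcup_i\hat F_i$ is the exceptional locus of $\pi_Z\circ\pi_{\tilde L}$ and $R_{\hat{\mathcal{X}}'}$ is the proper transform of $Z\times\PP^1$ (Definition \ref{RDef}), one has
$$
\hat{\mathcal{L}}_\beta'\sim_{\RR}(p_{S}\circ\pi_{Z}\circ\pi_{\tilde{L}})^\star(L_\beta-cZ)+cR_{\hat{\mathcal{X}}'}+\sum_{i=1}^{r}(c-\delta_i)\hat{F}_i,
$$
in which every coefficient is nonnegative under the hypotheses. For a horizontal curve $\Gamma$ (finite over $Z$) with $\Gamma\not\subset R_{\hat{\mathcal{X}}'}$, a degree count over $Z$ gives $\hat{\mathcal{L}}_\beta'.\Gamma\ge(L_\beta-cZ).Z\ge\gamma_1$; and $\Gamma\subset R_{\hat{\mathcal{X}}'}$ forces $\Gamma$ to be the single section, the proper transform of $E_Z\cap R_{\mathcal{X}}$, for which a direct computation using $S_0.\pi_{\tilde L}(\Gamma)=0$ yields $\hat{\mathcal{L}}_\beta'.\Gamma=L_\beta.Z-\sum_{i=1}^r\delta_i=L_\beta'.Z'\ge\gamma_0$. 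This decomposition, which converts the hard surface-cone question into two elementary estimates, is the missing idea. Finally, your sharpness remark is well taken and in fact exposes a boundary defect shared with the paper's statement: when $c=\delta_i$ one gets $\hat{\mathcal{L}}_\beta'.\hat C_i=c-\delta_i=0$, so the conclusion requires the strict inequality $c>\delta_i$ (which holds in the application, where $\delta_i=\beta<c$); in particular the strict inequality $\mathcal{L}_{\beta,c}|_{E_Z}.\Gamma_0>\sum_i\delta_i m_i$ you propose to prove is false as stated under the hypothesis $c\ge\delta_i$.
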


\begin{proof}
Since $L_{\beta}'$ is ample, there is a constant $\gamma_0>0$
(that depend only on $L_{\beta}'$) such that
$$
L_{\beta}'.\Omega'\ge\gamma_0
$$
for every curve $\Omega'$ in $S'$. Similarly, there is a constant
$\gamma_1>0$ (that depend on $L_{\beta}'$ and $c$ alone) such that
$$
\Big(L_\beta-cZ\Big).\Omega\ge\gamma_1%
$$
for every curve $\Omega\subset S$, because
$c<\epsilon(X,Z,L_{\beta})$. Put
$$
\gamma=\min\big\{c,\gamma_0,\gamma_1,\delta_1,\ldots,\delta_r,c-\delta_1,\ldots,c-\delta_r\big\}.
$$

Let $\Gamma$ be an irreducible curve in $\hat{\mathcal{X}}'$ that
is contracted by $\hat{p}'$ to a point. To show that
$\hat{\mathcal{L}}_\beta'$ is $\hat{p}'$-ample, it is enough to
prove that $\hat{\mathcal{L}}_{\beta}.\Gamma\ge\gamma$.

Denote by $\hat{S}_0$ and $\hat{E}_Z$ the proper transforms of the
surfaces $S_0$ and $E_Z$ on the threefold $\hat{\mathcal{X}}'$,
respectively. If $\Gamma\not\subset \hat{E}_Z\cup \hat{S}_{0}$,
then
$$
\hat{\mathcal{L}}_\beta'. \Gamma\ge\gamma_0\ge
\gamma.
$$
Thus, we may assume that $\Gamma\subset \hat{E}_Z\cup
\hat{S}_{0}$. One the other hand, it follows from
\eqref{equation:main-diagram} that
$$
\hat{S}_{0}\cong S
$$
and $\hat{\mathcal{L}}_\beta'\vert_{\hat{S}_{0}}\sim_{\mathbb{R}}
L_\beta-cZ$. 
Thus, if $\Gamma\subset\hat{S}_{0}$, then
$$
\hat{\mathcal{L}}_\beta'.\Gamma\ge \gamma_1\ge
\gamma.
$$
Hence, we may assume that $\Gamma\subset\hat{E}_Z$.

Denote by $\hat{F}_1,\ldots,\hat{F}_r$ the exceptional divisors of
$\pi_{\tilde L}$. We may assume that
$\pi_{\tilde{L}}(\hat{F}_i)=\tilde L_i$ for every $i$. 
Using \eqref{Lbetaprime} and \eqref{equation:L-prime-L} gives
\begin{equation}
\label{equation:lemma:flop-slop-ample}
\hat{\mathcal{L}}_\beta'
\sim_{\mathbb{R}}
(p_{S}\circ\pi_{Z}\circ\pi_{\tilde{L}})^\star
(L_\beta)-\sum_{i=1}^{r}\delta_i\hat{F}_i-c\hat{E}_{Z}.
\end{equation}
If $\pi_{\tilde{L}}(\Gamma)$ is a point $\tilde{L}_i\cap E_Z$,
then
$$
\hat{\mathcal{L}}_\beta'.\Gamma=\delta_i\ge \gamma.
$$
If
$\Gamma=\hat{C}_i$, then
$$
\hat{\mathcal{L}}_\beta'.\Gamma=c-\delta_i\ge \gamma.
$$
If
$\Gamma$ is contracted by $\pi_{Z}\circ\pi_{\tilde{L}}$ to a point
in $Z$ that is different from $O_1,\ldots,O_r$, then
$$
\hat{\mathcal{L}}_\beta'.\Gamma=c\ge \gamma.
$$
Thus, we may assume that $\pi_{\tilde{L}}\circ\pi_{Z}(\Gamma)=Z$.
In particular, we see that 
\beq
\lb{notcontainedEq}
\h{$\Gamma$ is not contained in any
divisor $\hat{F}_i$.}
\eeq
Rewriting 
\eqref{equation:lemma:flop-slop-ample} and using the fact
that $\hat{E}_Z\cup_i \hat F_i$ is the exceptional divisor
of $\pi_{Z}\circ\pi_{\tilde{L}}$ gives (recall
Definition \ref{RDef}),
$$
\baeq
\hat{\mathcal{L}}_\beta'
&\sim_{\mathbb{R}}
(p_{S}\circ\pi_{Z}\circ\pi_{\tilde{L}})^\star
(L_\beta-cZ)+c(p_{S}\circ\pi_{Z}\circ\pi_{\tilde{L}})^\star
Z-\sum_{i=1}^{r}\delta_i\hat{F}_i-c\hat{E}_{Z}
\\
&\sim_{\mathbb{R}} 
(p_{S}\circ\pi_{Z}\circ\pi_{\tilde{L}})^\star
(L_\beta-cZ)+c(\pi_{Z}\circ\pi_{\tilde{L}})^\star
(Z\times\PP^1)-\sum_{i=1}^{r}\delta_i\hat{F}_i-c\hat{E}_{Z}
\\
&\sim_{\mathbb{R}} 
(p_{S}\circ\pi_{Z}\circ\pi_{\tilde{L}})^\star(L_\beta-cZ)
+cR_{\hat{\mathcal{X}}'}+\sum_{i=1}^{r}(c-\delta_i)\hat{F}_i.
\eaeq
$$
Thus, if $\Gamma\not\subset R_{\hat{\mathcal{X}}'}$, then 
since $\Gamma$ is a finite cover of $Z$, degree consideration give
$$
\baeq
\hat{\mathcal{L}}_\beta'.\Gamma
&=(p_{S}\circ\pi_{Z}\circ\pi_{\tilde{L}})^\star
(L_\beta-cZ).\Gamma+cR_{\hat{\mathcal{X}}'}.\Gamma+\sum_{i=1}^{r}(c-\delta_i)\hat{F}_i.\Gamma
\\
&\ge (L_\beta-cZ).
Z+cR_{\hat{\mathcal{X}}'}.\Gamma+\sum_{i=1}^{r}(c-\delta_i)\hat{F}_i.\Gamma
\cr
&\ge (L_\beta-cZ).
Z\ge \gamma_1\ge \gamma,
\eaeq
$$
where we also used \eqref{notcontainedEq}.
Thus, we may assume that $\Gamma\subset R_{\hat{\mathcal{X}}'}$. Then
$\Gamma$ is the proper transform of the curve $E_Z\cap
R_{\mathcal{X}}$. Since the surfaces $S_0$ and $R_{\mathcal{X}}$
are disjoint, we have $S_0.\pi_{\tilde{L}}(\Gamma)=0$. Then
$$
\baeq
\hat{\mathcal{L}}_\beta'.\Gamma
&=
(p_{S}\circ\pi_{\tilde{L}}\circ\pi_{Z})^\star(L_\beta).\Gamma
-
\sum_{i=1}^{r}\delta_i\hat{F}_i.\Gamma
-
c\hat{E}_{Z}.\Gamma
\cr
&=L_\beta.
Z-\sum_{i=1}^{r}\delta_i\hat{F}_i.\Gamma-c\hat{E}_{Z}.\Gamma
\cr
&=L_\beta.
Z-\sum_{i=1}^{r}\delta_i-c\hat{E}_{Z}.\Gamma\\
&=L_\beta.
Z-\sum_{i=1}^{r}\delta_i-cE_{Z}.\pi_{\tilde{L}}(\Gamma)
\cr
&=L_\beta.
Z-\sum_{i=1}^{r}\delta_i+cS_{0}.\pi_{\tilde{L}}(\Gamma)\\
&=L_\beta. Z-\sum_{i=1}^{r}\delta_i=L'_\beta. Z'\ge
\gamma_0\ge\gamma.
\eaeq
$$
This completes the proof of the lemma.
\end{proof}

Let $\hat{\mathcal{D}}'$ be the proper transform of the divisor
$\mathcal{D}$ on the threefold $\hat{\mathcal{X}}'$, and let
$\hat{\mathcal{L}}_\beta'$ be the proper transform of (the class in
$\mathrm{Pic}(\hat{\mathcal{X}}')\otimes\mathbb{R}$ of) the
divisor $\mathcal L_\beta'$ on the threefold $\hat{\mathcal{X}}'$ (note that
$\hat{\mathcal{L}}_\beta'$ is well-defined, since $f$ is an
isomorphism in codimension one).

\begin{cor}
\label{cor:floped-DNC-blow-up} 
Suppose that
$\epsilon(S',Z',L'_\beta)<c<\epsilon(S,Z,L_\beta)$ and
$c\ge\delta_i$ for every $i$.
Then the quadruple
$(\hat{\mathcal{X}}',\hat{\mathcal{L}}_\beta',\hat{\mathcal{D}}',\be)$
is a test configuration.
\end{cor}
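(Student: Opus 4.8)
The plan is to verify the three defining properties of a test configuration for the quadruple $(\hat{\mathcal{X}}',\hat{\mathcal{L}}_\beta',\hat{\mathcal{D}}',\be)$, as laid out in \S\ref{section:DF}: that $\hat{\mathcal{X}}'$ is a normal $\mathbb{Q}$-factorial variety of dimension $n+1=3$ equipped with a flat surjective map to $\mathbb{P}^1$, that $\hat{\mathcal{L}}_\beta'$ is $\hat{p}'$-ample, and that the general fibers recover $(S,L_\beta,(1-\beta)C)$ while the central fiber does not. Most of the geometric scaffolding is already in place from the preceding construction, so the proof is largely a matter of assembling these facts.

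First I would record that $\hat{\mathcal{X}}'$ is projective and normal: projectivity was established just above via the explicit realization of $\hat{\mathcal{X}}'$ as the blow-up $\pi_{\tilde L}\colon\hat{\mathcal{X}}'\to\mathcal{X}$ of the smooth disjoint curves $\tilde L_1,\ldots,\tilde L_r$, and since blowing up a smooth threefold along smooth centers preserves smoothness, $\hat{\mathcal{X}}'$ is in fact smooth, hence normal and $\mathbb{Q}$-factorial. The surjective morphism $\hat{p}'\colon\hat{\mathcal{X}}'\to\mathbb{P}^1$ and its flatness were already supplied by \cite[Proposition~9.7]{Har77}, and flatness together with the target being a smooth curve is exactly what is needed. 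Next I would invoke Lemma~\ref{lemma:flop-slop-ample}, whose hypotheses $\epsilon(S',Z',L'_\beta)<c<\epsilon(S,Z,L_\beta)$ and $c\ge\delta_i$ coincide with those of the present corollary: it gives precisely the $\hat{p}'$-ampleness of $\hat{\mathcal{L}}_\beta'$, so the ampleness condition is immediate.

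It then remains to identify the fibers and the divisor $\hat{\mathcal{D}}'$. Away from $[0:1]$, the maps $\pi_L$, $\pi_{Z'}$, $f$, and $\pi_{\tilde L}$ are all isomorphisms on the corresponding fibers, since every blow-up center and every flopped curve lives inside the central fiber; thus the general fiber of $\hat{p}'$ is isomorphic to $S$, and $\hat{\mathcal{L}}_\beta'$ and $\hat{\mathcal{D}}'$ restrict there to $L_\beta$ and $(1-\beta)C$ by construction of $L'_\beta$ (via \eqref{equation:L-prime-L}, the correction terms $\delta_i C'_i$ are supported on the exceptional locus) and of the proper transform. One must also confirm that $\mathrm{Supp}(\hat{\mathcal{D}}')$ contains no fiber component, which holds because $\hat{\mathcal{D}}'$ is the proper transform of $\mathcal{D}$, whose support already avoided fiber components in the original deformation to the normal cone.

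The main obstacle is the central fiber: one must check that it is \emph{not} isomorphic to $S$, so that this is a genuine (non-product) degeneration, and that $\hat{\mathcal{L}}_\beta'$ and $\hat{\mathcal{D}}'$ are honest $\mathbb{R}$-divisors on $\hat{\mathcal{X}}'$ rather than merely $\mathbb{R}$-Cartier classes transported across the flop. The subtlety is that $f\colon\mathcal{X}'\dashrightarrow\hat{\mathcal{X}}'$ is only a birational map, an isomorphism in codimension one, so a priori $\hat{\mathcal{L}}_\beta'$ is defined only as the strict transform of a class in $\mathrm{Pic}(\hat{\mathcal{X}}')\otimes\mathbb{R}$; the remark immediately preceding the corollary notes that this class is nonetheless well-defined precisely because $f$ alters nothing in codimension one. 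I would argue that because the flop replaces each curve $C_i$ by $\hat C_i$ without changing the birational class of the divisor, $\hat{\mathcal{L}}_\beta'$ remains a legitimate $\mathbb{R}$-Cartier divisor, and the $\hat{p}'$-ampleness furnished by Lemma~\ref{lemma:flop-slop-ample} certifies that it behaves as required on the central fiber $\hat S_0\cup\hat E_Z$. With these observations the four defining properties are all verified and the quadruple is a test configuration.
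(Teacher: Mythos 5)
Your overall strategy is the right one and matches what the paper leaves implicit: Corollary \ref{cor:floped-DNC-blow-up} has no separate proof in the paper because it follows immediately from facts already on record --- smoothness and projectivity of $\hat{\mathcal{X}}'$ via the realization $\pi_{\tilde L}\colon\hat{\mathcal{X}}'\to\mathcal{X}$ as a blow-up of the disjoint smooth curves $\tilde L_i$, flatness of $\hat p'$ via \cite[Proposition~9.7]{Har77}, well-definedness of $\hat{\mathcal{L}}_\beta'$ because $f$ is an isomorphism in codimension one, and, decisively, the $\hat p'$-ampleness supplied by Lemma \ref{lemma:flop-slop-ample} under exactly the stated hypotheses. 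However, your identification of the fibers contains a genuine error. You assert that ``every blow-up center and every flopped curve lives inside the central fiber,'' and conclude that the general fiber of $\hat p'$ is $S$ with $\hat{\mathcal{L}}_\beta'$ and $\hat{\mathcal{D}}'$ restricting to $L_\beta$ and $(1-\beta)C$. This is false for $\pi_L$ and $\pi_{\tilde L}$: their centers are the curves $L_i=\{O_i\}\times\mathbb{P}^1$ of \eqref{LiEq} and their proper transforms $\tilde L_i$, which are \emph{horizontal} sections dominating $\mathbb{P}^1$ and meeting every fiber transversally at one point. Consequently the general fiber of $\hat p'$ is $S'$, the blow-up of $S$ at $O_1,\ldots,O_r$, not $S$; and by \eqref{equation:lemma:flop-slop-ample} together with \eqref{equation:L-prime-L}, the restriction of $\hat{\mathcal{L}}_\beta'$ to a general fiber is $\pi_O^\star L_\beta-\sum_{i=1}^r\delta_iC_i'\sim_{\mathbb{R}}L'_\beta$, while $\hat{\mathcal{D}}'$ restricts to $C'$.

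This is not a cosmetic slip: the quadruple is a test configuration for $(S',L'_\beta,(1-\beta)C')$, and this is the entire point of the construction --- compare Remark \ref{remark:DNC-blow-up} for the unflopped $(\mathcal{X}',\mathcal{L}_\beta',\mathcal{D}')$, and Proposition \ref{proposition:big-nef}, where the flopped configuration is used to destabilize the pair $(S',C')$, precisely because $(S',C')$ itself cannot be slope-destabilized. The correct way to see that the fibers away from $[0:1]$ are unchanged by the passage from $\mathcal{X}'$ to $\hat{\mathcal{X}}'$ is through the flop picture rather than the blow-up picture: $f$ modifies only the curves $C_1,\ldots,C_r$, which do lie in the central fiber of $p'$, so the general fibers of $\hat p'$ agree with those of $p'$, namely $S'$. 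Two smaller points: the definition in \S\ref{section:DF} does not exclude product configurations, so your insistence on verifying that the central fiber is not isomorphic to the general fiber is not part of what must be checked (though here the central fiber $\hat S_0\cup\hat E_Z$ is visibly reducible); and since $\hat{\mathcal{X}}'$ is smooth, the distinction you worry about between $\mathbb{R}$-divisors and $\mathbb{R}$-Cartier classes transported across the flop evaporates --- every divisor is Cartier, and the proper transform under an isomorphism in codimension one is an honest divisor.
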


Next, we compute the generalized Futaki invariant of the flopped test
configuration.

\begin{proposition}
\label{proposition:Futaki-flop-slop} 
Suppose that
$$
L_\beta'\sim_{\mathbb{R}}-K_{S'}-(1-\beta)C'.
$$
Then (recall (\ref{equation:intersection-formula})), 
$$
\baeq
\mathrm{F}(\hat{\mathcal{X}}',\hat{\mathcal{L}}_\beta',\hat{\mathcal{D}}',\be)
&=
\mathrm{F}(\mathcal{X}', \mathcal{L}_\beta',\mathcal{D}',\be)
-
2\sum_{i=1}^{r}(\mathcal{L}_\beta'.C_i)^3
-
3(1-\beta)\sum_{i=1}^{r}(\mathcal{L}_\beta'.C_i)^2(\mathcal{D}'.C_i)
\\
&
=
2(\mathcal{L}_\beta')^3
+
3
\Big(
K_{\mathcal{X}'}
-
(p')^\star\big(K_{\mathbb{P}^1}\big)
+
(1-\beta)\mathcal{D}'\Big).
(\mathcal{L}_\beta')^2
\cr
&
\q 
-
2\sum_{i=1}^{r}(\mathcal{L}_\beta'.C_i)^3
-
3(1-\beta)\sum_{i=1}^{r}(\mathcal{L}_\beta'.C_i)^2(\mathcal{D}'.C_i).
\eaeq
$$
\end{proposition}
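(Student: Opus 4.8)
The plan is to prove the second equality of Proposition~\ref{proposition:Futaki-flop-slop} by direct substitution of the intersection formula~\eqref{equation:intersection-formula} and then to establish the first equality by comparing intersection numbers on $\mathcal{X}'$ and $\hat{\mathcal{X}}'$ across the flop $f$. The second equality is essentially a restatement: since $(\hat{\mathcal{X}}',\hat{\mathcal{L}}_\beta',\hat{\mathcal{D}}',\be)$ is a test configuration (Corollary~\ref{cor:floped-DNC-blow-up}) for a log Fano pair satisfying the hypothesis $L_\beta'\sim_{\mathbb{R}}-K_{S'}-(1-\beta)C'$, formula~\eqref{equation:intersection-formula} applies with $n=2$, giving $\mathrm{F}=2(\hat{\mathcal{L}}_\beta')^3+3\big(K_{\hat{\mathcal{X}}'}-(\hat p')^\star K_{\mathbb{P}^1}+(1-\beta)\hat{\mathcal{D}}'\big).(\hat{\mathcal{L}}_\beta')^2$. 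Thus the content lies entirely in showing this equals the displayed right-hand side written in terms of data on $\mathcal{X}'$.

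The core of the argument is therefore the first equality, which compares the Futaki invariant before and after the flop. First I would recall from Appendix~\ref{appendix} the geometry of a simple flop along a curve $C_i$ with normal bundle $\mathcal{O}(-1)\oplus\mathcal{O}(-1)$ (established in Lemma~\ref{lemma:normal-bundle}): the flop factors through the small contraction $c_C\colon\mathcal{X}'\to\overline{\mathcal{X}}$ collapsing $C_i$ to an ordinary double point, followed by the small resolution $c_{\hat C}$ extracting the flopped curve $\hat C_i$, as displayed in diagram~\eqref{equation:main-diagram}. Since $f$ is an isomorphism in codimension one, the proper transforms $\hat{\mathcal{L}}_\beta'$, $\hat{\mathcal{D}}'$, $K_{\hat{\mathcal{X}}'}$ agree with $\mathcal{L}_\beta'$, $\mathcal{D}'$, $K_{\mathcal{X}'}$ as cycle classes away from the flopping locus. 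The plan is to pull everything back to a common resolution $\overline{\mathcal{X}}$ (or equivalently to compute both triple and double intersection products on $\mathcal{X}'$ and $\hat{\mathcal{X}}'$ and track the discrepancy), so that the difference of intersection numbers localizes entirely on the curves $C_i$ and their flops $\hat C_i$.

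The key computation is the standard flop transition rule for intersection numbers of a divisor $A$ whose class is preserved in codimension one: under a simple $(-1,-1)$-flop of $C_i$, one has the correction $(\hat A)^3 = A^3 - 2\sum_i (A.C_i)^3$ for the triple self-intersection, and $(\hat A)^2.\hat B = A^2.B - 3\sum_i (A.C_i)^2(B.C_i)$ for a mixed product with a second divisor $B$ preserved in codimension one. I would apply these with $A=\mathcal{L}_\beta'$ and $B=K_{\mathcal{X}'}-(p')^\star K_{\mathbb{P}^1}+(1-\beta)\mathcal{D}'$, noting that $K_{\hat{\mathcal{X}}'}=K_{\mathcal{X}'}$ as classes since the flop is crepant and that $(\hat p')^\star K_{\mathbb{P}^1}$ matches $(p')^\star K_{\mathbb{P}^1}$ because $\hat p'\circ f=p'$. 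Substituting into~\eqref{equation:intersection-formula} then yields
$$
\mathrm{F}(\hat{\mathcal{X}}',\hat{\mathcal{L}}_\beta',\hat{\mathcal{D}}',\be)
=\mathrm{F}(\mathcal{X}',\mathcal{L}_\beta',\mathcal{D}',\be)
-2\sum_{i=1}^{r}(\mathcal{L}_\beta'.C_i)^3
-3(1-\beta)\sum_{i=1}^{r}(\mathcal{L}_\beta'.C_i)^2(\mathcal{D}'.C_i),
$$
after observing that $K_{\mathcal{X}'}.C_i=(p')^\star K_{\mathbb{P}^1}.C_i=0$ (the former was computed to vanish in the proof of Lemma~\ref{lemma:normal-bundle}, and $C_i$ lies in a fiber of $p'$), so that only the $\mathcal{L}_\beta'$ and $\mathcal{D}'$ terms contribute to the correction.

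The main obstacle is rigorously justifying the flop transition formulas, i.e.\ verifying the coefficients $2$ and $3$ and confirming that no other classes contribute corrections. This is exactly the intersection-theoretic input deferred to Appendix~\ref{appendix}; the cleanest route is to compute on the common blow-up $q\colon\overline{\mathcal{X}}\to\mathcal{X}$ in diagram~\eqref{equation:main-diagram}, expressing $f^\star$ of each relevant class as the original class plus a multiple of the exceptional locus over the flopping curves, and then using that the normal bundle $\mathcal{O}(-1)\oplus\mathcal{O}(-1)$ forces the self-intersection contributions $C_i^3$-type terms to produce precisely these numerical coefficients. Once the Appendix supplies these local flop identities, the proposition follows by the purely formal substitution outlined above.
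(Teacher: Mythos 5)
Your overall plan matches the paper's proof: the second equality is indeed just \eqref{equation:intersection-formula} applied on $\hat{\mathcal{X}}'$ (legitimate by Corollary~\ref{cor:floped-DNC-blow-up}), and the first equality is obtained by tracking how triple intersection numbers change across the flop, using that $K_{\mathcal{X}'}.C_i=0$ and $(p')^\star K_{\mathbb{P}^1}.C_i=0$ so that only $\mathcal{L}_\beta'$ and $\mathcal{D}'$ contribute corrections. However, the flop transition rules you state as the ``key computation'' are wrong, and the error is not cosmetic. The correct rule for a simple $(-1,-1)$-flop, which is exactly what Lemma~\ref{lemma:flops-intersections} in Appendix~\ref{appendix} proves, is trilinear with coefficient \emph{one}:
$$
\hat H_1.\hat H_2.\hat H_3=H_1.H_2.H_3-(H_1.C)(H_2.C)(H_3.C),
$$
so that $(\hat A)^3=A^3-\sum_i(A.C_i)^3$ and $(\hat A)^2.\hat B=A^2.B-\sum_i(A.C_i)^2(B.C_i)$. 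The coefficients $2$ and $3$ in the proposition do not come from the flop at all; they are the weights $n=2$ and $n+1=3$ already present in \eqref{equation:intersection-formula}. If one substitutes your rules (with the spurious factors $2$ and $3$) into $2(\hat{\mathcal{L}}_\beta')^3+3\big(K_{\hat{\mathcal{X}}'}-(\hat p')^\star K_{\mathbb{P}^1}+(1-\beta)\hat{\mathcal{D}}'\big).(\hat{\mathcal{L}}_\beta')^2$, the corrections come out as $-4\sum_i(\mathcal{L}_\beta'.C_i)^3-9(1-\beta)\sum_i(\mathcal{L}_\beta'.C_i)^2(\mathcal{D}'.C_i)$, contradicting the formula you then display; your derivation is internally inconsistent and lands on the right answer only because you wrote down the target rather than the output of your stated rules.

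A secondary inaccuracy in your proposed justification: $\overline{\mathcal{X}}$ is not a common resolution --- it is the singular common \emph{contraction}, with $r$ ordinary double points ($c_C$ and $c_{\hat C}$ are small contractions), so you cannot ``pull back'' divisor classes to it and compute there in the way you suggest. The common smooth model on which the Appendix computes is the blow-up $W=\mathrm{Bl}_{C}\mathcal{X}'=\mathrm{Bl}_{\hat C}\hat{\mathcal{X}}'$ of the flopping curves, with exceptional divisors $E\cong\mathbb{P}^1\times\mathbb{P}^1$; the numerical inputs are $E^3=2$, $c_C^\star H.E^2=-(H.C)$, $c_C^\star H_i.c_C^\star H_j.E=0$, together with the multiplicity transformation $\hat m_i=r_i+m_i$, $\hat r_i=-r_i$ for proper transforms, and these combine to give precisely the coefficient-one trilinear formula above. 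With those two corrections your argument becomes the paper's proof.
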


\begin{proof}
Recall from \S\ref{section:DF} that
$$
\mathrm{F}(\hat{\mathcal{X}}',\hat{\mathcal{L}}_\beta',\hat{\mathcal{D}}',\be)=2(\hat{\mathcal{L}}_\beta')^3
+3
\Big(
K_{\hat{\mathcal{X}}'}
-
(\hat{p}')^\star\big(K_{\mathbb{P}^1}\big)
+(1-\beta)\hat{\mathcal{D}}'
\Big).
(\hat{\mathcal{L}}_\beta')^2.
$$
The assertion now follows from
\eqref{equation:intersection-formula} and 
Lemma \ref{lemma:flops-intersections}, together 
with the fact that, as in \eqref{intersectionKVCEq}, 
$K_{{\mathcal X}'}.C_i'=0$, while $p^\star K_{\PP^1}.C_i'=0$
since the $C_i$ are contained in the central fiber of $p$.
\end{proof}

\section{Proof of Theorem \ref{MainThm}}
\label{section:proof}

According to \cite[Theorem 1.4]{CR}, all ALF surfaces
$(S,C)$ such that $-K_S-C$ is big satify either
(i) $-K_S-C$ is ample, or (ii) $S$ is obtained
from an ALF surface $(s,c)$ such that $-K_s-c$ is ample
by blowing-up $s$ at $r>0$ distinct points on $c$
and letting $C$ denote the proper transform of $c$.
Proposition \ref{theorem:Maeda} already established
Theorem \ref{MainThm} in the case (i) holds. To 
complete the proof of Theorem \ref{MainThm} it remains
to handle case (ii).

To that end, we switch back 
to the notation and assumptions of \S\ref{section:flops}.
We suppose that $(S,C)$ is such that  $-K_{S}-C$ is ample
(hence ALF), and that $(S',C')$ is still ALF, i.e., 
$$
L_\beta':=-K_{S'}-(1-\beta)C'
$$ 
is ample for all sufficiently small $\beta$. 
Note that
$-K_{S'}-C'=\pi_O^\star(-K_S-C)$ is big being the pull-back
under a birational map of an ample class.
Thus, $(S',C')$ satisfies the assumptions of Theorem \ref{MainThm}.
However, it is not possible to slope destabilize this latter pair
in the same way as was done for $(S,C)$ in \S\ref{section:maeda-class}. Indeed, 
\beq
\lb{LbetaprEq}
L_\beta'
\sim_{\mathbb{R}} \pi_O^\star(-K_{S}-(1-\beta)C)-\beta\sum_{i=1}^rC_i',%
\eeq
so 
that by Lemma \ref{lemma:Seshadri-constants}   (i) 
(putting $\delta_i=\beta$ and $Z=C$),
$\epsilon(S',Z',L'_\beta)\le\beta$, 
and in particular using Proposition \ref{proposition:formula} one checks
that the generalized Futaki invariant 
$F(\mathcal{X}',\mathcal{L}_{\be,c}',\mathcal{D},\be)$ 
of the degeneration
to the normal cone 
is positive for $c\in(0,\be)$, and so $(S',C')$ is not slope destabilized in this way.
In what follows, we apply the results of \S\ref{section:flops} to destabilize our
pair nevertheless.

Before proving Theorem~\ref{MainThm}, let us consider
a model example.

\begin{example}
\label{example:blow-up-conic}
{\rm
Suppose that $S=\mathbb{P}^2$ and
$C$ is a smooth conic. Then $\epsilon(S',L_\beta',Z')=\beta$.
Thus, if $c<\beta$, then $\mathcal{L}_{\beta,c}'$ is $p'$-ample. By
Proposition~\ref{proposition:formula}, we have
$$
\mathrm{F}(\mathcal{X}', \mathcal{L}_{\beta,c}',\mathcal{D}',\be)
=\big(6\beta c-3c^2\big)\big(2+\beta(4-r)\big)+\big(2c^3-3c^2\beta\big)\big(4-r\big).%
$$
In particular, this invariant is always positive for $\be$ sufficiently small
(depending on $r$).
On the other hand,
$$
\tau(S',L_\beta',Z')=\epsilon(S,L_\beta,Z)=\frac{1}{2}+\beta.
$$
Thus, if $\beta<c<\frac{1}{2}+\beta$, then
$\hat{\mathcal{L}}_\beta'$ is $\hat{p}'$-ample by
Lemma~\ref{lemma:flop-slop-ample}. By
Proposition~\ref{proposition:Futaki-flop-slop}, one has
\begin{multline*}
\mathrm{F}(\hat{\mathcal{X}}',\hat{\mathcal{L}}_{\beta,c}',\hat{\mathcal{D}}',\be)=\mathrm{F}(\mathcal{X}',
\mathcal{L}_{\beta,c}',\mathcal{D}',\be)+2r(c-\beta)^3=\\
=\big(6\beta
c-3c^2\big)\big(2+\beta(4-r)\big)+\big(2c^3-3c^2\beta\big)\big(4-r\big)+2r(c-\beta)^3
\end{multline*}
(see Appendix~\ref{appendix}). If we put $c=\frac{1}{2}+\beta$,
then
$$
\lim_{\beta\to
0^+}\mathrm{F}(\hat{\mathcal{X}}',\hat{\mathcal{L}}_\beta',\hat{\mathcal{D}}',\be)=-\frac{1}{2}.
$$
}
\end{example}

Recalling the discussion at the beginning of this section, 
Theorem~\ref{MainThm} follows from
the following result.

\begin{proposition}
\label{proposition:big-nef} 
The triple $(S',C',\be)$ is flop-slope unstable
for all sufficiently small $\beta$.
\end{proposition}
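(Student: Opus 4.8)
The plan is to show that the flopped test configuration
$(\hat{\mathcal{X}}',\hat{\mathcal{L}}_\beta',\hat{\mathcal{D}}',\beta)$
destabilizes $(S',C',\beta)$ by computing
$\mathrm{F}(\hat{\mathcal{X}}',\hat{\mathcal{L}}_\beta',\hat{\mathcal{D}}',\beta)$
via Proposition~\ref{proposition:Futaki-flop-slop} and exhibiting an admissible value of $c$ for which it is negative. By Corollary~\ref{cor:floped-DNC-blow-up}, any choice of $c$ with
$\epsilon(S',Z',L'_\beta)<c<\epsilon(S,Z,L_\beta)$ and $c\ge\delta_i$ for all $i$ produces a genuine test configuration, so that Theorem~\ref{theorem:Berman} applies once the Futaki invariant is shown to be negative. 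The key point is that, as observed in \eqref{LbetaprEq}, slope stability with respect to $Z'$ alone \emph{cannot} destabilize $(S',C')$ (the relevant $c$ lies in $(0,\beta)$ and the invariant is positive there), whereas the flop unlocks the larger range $c\in(\beta,\tau(S',Z',L'_\beta))$, exactly as in the model computation of Example~\ref{example:blow-up-conic}.

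First I would take $Z=C$ (so $Z'=C'$) and set $\delta_i=\beta$ for every $i$, matching \eqref{LbetaprEq}; Lemma~\ref{lemma:Seshadri-constants}~(i) then gives $\epsilon(S',Z',L'_\beta)\le\beta$. The admissible window for the flopped configuration is $\beta< c<\epsilon(S,Z,L_\beta)$, and since $-K_S-C$ is ample the argument in the proof of Proposition~\ref{theorem:Maeda} shows $\epsilon(S,C,L_\beta)\ge\gamma+\beta$ for $\gamma<\epsilon(S,C,-K_S-C)$, so this window is nonempty for small $\beta$. Next I would feed Proposition~\ref{proposition:Futaki-flop-slop} with the two ingredients it requires: the unflopped invariant
$\mathrm{F}(\mathcal{X}',\mathcal{L}_{\beta,c}',\mathcal{D}',\beta)$, computed from Proposition~\ref{proposition:formula} applied on $S'$ with $Z'=C'$ (the $Z=C$ branch), and the correction terms
$-2\sum_i(\mathcal{L}_\beta'.C_i)^3-3(1-\beta)\sum_i(\mathcal{L}_\beta'.C_i)^2(\mathcal{D}'.C_i)$.
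Using $\mathcal{L}_\beta'.C_i=c-\delta_i=c-\beta$ and $\mathcal{D}'.C_i=(1-\beta)C'.C_i$, together with the intersection data on $S'$ recorded via \eqref{LbetaprEq}, these sums collapse to explicit cubic polynomials in $c$ and $\beta$ scaled by $r$, reproducing the $+2r(c-\beta)^3$ shape seen in Example~\ref{example:blow-up-conic}.

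The decisive step is then a limiting computation: choosing $c$ at (or just below) the pseudoeffective threshold $\tau(S',Z',L'_\beta)=\epsilon(S,C,L_\beta)$ and letting $\beta\to0^+$. I expect the unflopped part
$\mathrm{F}(\mathcal{X}',\mathcal{L}_{\beta,c}',\mathcal{D}',\beta)$ to tend to a value controlled, as in Proposition~\ref{theorem:Maeda}, by the intersection numbers on $S$, while the flop correction contributes a definite negative quantity in the limit. Concretely, I would show
$\lim_{\beta\to0^+}\mathrm{F}(\hat{\mathcal{X}}',\hat{\mathcal{L}}_\beta',\hat{\mathcal{D}}',\beta)<0$,
so that negativity persists for all sufficiently small $\beta>0$; by Lemma~\ref{lemma:flop-slop-ample} the divisor $\hat{\mathcal{L}}_\beta'$ is $\hat{p}'$-ample in the required range, so the configuration is legitimate and the pair is flop-slope unstable. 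The main obstacle is bookkeeping rather than conceptual: one must track how the $r$ blown-up points alter the intersection numbers on $S'$ versus $S$ (the $(4-r)$-type coefficients in Example~\ref{example:blow-up-conic}), verify that the flop contribution dominates to force the sign for \emph{every} admissible $r>0$, and confirm the admissibility constraints $\epsilon(S',C',L'_\beta)<c<\epsilon(S,C,L_\beta)$ and $c\ge\beta$ hold simultaneously at the chosen $c$ as $\beta\to0$; the delicate part is ensuring these intersection numbers remain in the regime where the limit is strictly negative independently of the configuration of points.
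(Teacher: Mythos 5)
Your architecture is the paper's: take $Z=C$, $\delta_i=\beta$, note $\epsilon(S',Z',L'_\beta)\le\beta$ from \eqref{LbetaprEq}, open the window $\epsilon(S',Z',L'_\beta)\le\beta<c\le\gamma<\epsilon(S,L_\beta,Z)$ using ampleness of $-K_S-C$ as in Proposition~\ref{theorem:Maeda}, invoke Lemma~\ref{lemma:flop-slop-ample}, compute via Propositions~\ref{proposition:formula} and~\ref{proposition:Futaki-flop-slop}, and let $\beta\to0^+$ at $c=\gamma$. But two of your concrete inputs are wrong, and they sit exactly where the proof does its real work. First, $\mathcal{D}'.C_i$ is not $(1-\beta)C'.C_i$: the factor $(1-\beta)$ already sits outside the sum in Proposition~\ref{proposition:Futaki-flop-slop}, and more importantly $\mathcal{D}'.C_i=0$. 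Since $Z'=C'$, the intersection of $\mathcal{D}=C'\times\mathbb{P}^1$ with the central fiber $S'\subset S'\times\mathbb{P}^1$ is exactly the blown-up curve $Z'$, so after $\pi_{Z'}$ the proper transform $\mathcal{D}'$ is disjoint from $S'_0\cong S'$, which contains the $C_i$; this is precisely where the hypothesis $Z'=C'$ enters. Your nonzero value (indeed $C'.C_i'=1$ since $O_i\in C$) would add a spurious term $-3(1-\beta)r(c-\beta)^2$ of the kind that genuinely appears in the $Z\ne C$ examples of \S\ref{ExampSec}, contradicting the clean $+2r(c-\beta)^3$ shape you yourself cite from Example~\ref{example:blow-up-conic}. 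Second, the sign of the flop contribution: $\mathcal{L}'_{\beta,c}.C_i=L'_\beta.C_i'-cZ'.C_i'=\beta-c<0$, not $c-\beta$ (these are the flopping curves, which meet $\mathcal{L}'_{\beta,c}$ negatively, cf.\ Lemma~\ref{lemma:flop-slope-Seshadri-0}), so the correction is $-2r(\beta-c)^3=+2r(c-\beta)^3>0$, tending to $+2r\gamma^3$. Thus the flop correction \emph{hurts}; it does not ``contribute a definite negative quantity in the limit'' as you assert. The negativity comes entirely from the main slope-type term evaluated at the enlarged value of $c$ that the flop makes admissible.

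Consequently the ``delicate part'' you defer is the actual proof, and as sketched you have no mechanism to close it: you must show that the positive term $2r\gamma^3$, which grows with $r$, is dominated uniformly in $r$ and in the configuration of points. The paper achieves this through a cancellation your outline never identifies: writing $L'_\beta-\gamma C'\sim_{\mathbb{R}}\pi_O^\star(L_\beta-\gamma C)+(\gamma-\beta)\sum_{i=1}^r C_i'$ gives
$$
-2\gamma^2\big(L'_\beta-\gamma C'\big).C'=-2\gamma^2\big(L_\beta-\gamma C\big).C-2\gamma^2(\gamma-\beta)r,
$$
and the second term absorbs $+2r\gamma^3$ up to $O(\beta)$; combined with $L'_\beta.C'=2+\beta (C')^2$ (rationality of $C$, Remark~\ref{RationalRemark}) and the ampleness of $L_\beta-\gamma C$ on $S$, this yields \eqref{FutakiLongEq} and the bound $\mathrm{F}(\hat{\mathcal{X}}',\hat{\mathcal{L}}_{\beta,\gamma}',\hat{\mathcal{D}}',\beta)<-2\gamma^2+O(\beta)$, independent of $r$. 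One further small point: choose $c\le\gamma$ strictly below $\epsilon(S,L_\beta,Z)$ as in \eqref{SeshadriIneqsEq}, rather than ``at'' the pseudoeffective threshold $\tau(S',Z',L'_\beta)$, since Lemma~\ref{lemma:flop-slop-ample} requires $c<\epsilon(S,Z,L_\beta)$; the paper never needs $\tau$ in this proof.
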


\begin{proof}
Let $\epsilon(S,Z,-K_{S}-C)$ be the Seshadri constant of
$Z\subset S$ with respect to $-K_{S}-C$. Pick any positive
$\gamma<\epsilon(S,Z,-K_{S}-C)$. Then
$$
-K_{S}-C\sim_{\mathbb{R}}\gamma Z+H
$$
for some ample $\mathbb{R}$-divisor $H$. Then
$$
L_\beta\sim_{\mathbb{R}}-K_{S}-C+\beta
C\sim_{\mathbb{R}}(\gamma+\beta)C+H,
$$
hence
$\epsilon(S,L_\beta,Z)\ge\gamma+\beta$, 
so
$
\epsilon(S,L_\beta,Z)>\beta\ge\epsilon(S',L'_\beta,Z')
$
by Lemma \ref{lemma:Seshadri-constants} (i).
By taking $\be$ small, we may suppose that $\gamma>\beta$. 
Letting $c$ be a real number such
that 
\beq
\lb{SeshadriIneqsEq}
\epsilon(S',Z',L'_\beta)\le\beta< c \le \gamma< \epsilon(S,L_\beta,Z),
\eeq
Lemma~\ref{lemma:flop-slop-ample} implies that $\hat{\mathcal{L}}_{\beta,c}'$ is
$\hat{p}'$-ample. 
By
Proposition~\ref{proposition:Futaki-flop-slop}, we have
$$
\mathrm{F}(\hat{\mathcal{X}}',\hat{\mathcal{L}}_{\beta,c}',\hat{\mathcal{D}}',\be)=\mathrm{F}(\mathcal{X}', \mathcal{L}_{\beta,c}',\mathcal{D}',\be)
-2\sum_{i=1}^{r}(\mathcal{L}_{\beta,c}'.C_i)^3-3\sum_{i=1}^{r}(\mathcal{L}_{\beta,c}'.C_i)^2(\mathcal{D}'.C_i)\\
$$
Moreover, by Proposition~\ref{proposition:formula}
$$
\mathrm{F}(\mathcal{X}', \mathcal{L}_{\beta,c}',\mathcal{D}',\be)=(6\beta c-3c^2)L_\beta'.C'+(2c^3-3c^2\beta)(C')^2.%
$$
Note that using \eqref{Lbetaprime} and \eqref{LbetaprEq},
$$
\mathcal{L}_{\beta,c}'.C_i
=
\big(
(p_{S'}\circ\pi_{Z'})^\star L_\be' - cE_{Z'}
\big)
.C_i
=
L_\be'.C_i'-cZ'.C_i'=\be-c.
$$
In addition, before the blow-up $\pi_{Z'}$, the intersection of
$\mathcal D=C'\times \PP^1$ and $S'\subset S'\times \PP^1$ is precisely 
$Z'\subset S'\subset S'\times \PP^1$ (this is precisely where
we use that $Z'=C'$). Thus, after blowing-up $Z'$,
the surfaces $\mathcal D'$ and $S'_0\cong S'$ (recall \eqref{SprzeroEq}) no longer intersect.
Since $C_i$ is contained in $S'_0$,
$$
\mathcal{D}'.C_i=0.
$$
Combining these facts,
$$
\mathrm{F}(\hat{\mathcal{X}}',\hat{\mathcal{L}}_{\beta,c}',\hat{\mathcal{D}}',\be)=(6\beta c-3c^2)L_\beta'.C'+(2c^3-3c^2\beta)(C')^2+2r(c-\beta)^3.%
$$
By Remark \ref{RationalRemark}, $C$ and hence also $C'$ are rational, so
$L_\be'.C'=-(K_{S'}-(1-\be)C').C'=2+\be C'^2$. Thus, 
putting $c=\gamma$ and grouping most terms of order $\be$ together yields, 
\beq
\label{FutakiLongEq}
\baeq
\mathrm{F}(\hat{\mathcal{X}}',\hat{\mathcal{L}}_{\beta,\gamma}',\hat{\mathcal{D}}',\be)
&= 
-\gamma^2L_\beta'.C'
-2\gamma^2\big(L_\beta'-\gamma
C').C'+2r\gamma^3
\cr
&\qq
+\beta\big(6\gamma
L_\beta'.C'-3\gamma^2C'^2-6r\gamma^2+6r\beta\gamma-2r\beta^2\big)\cr
&= 
-
\gamma^2(2+\beta C'^2)
-
2\gamma^2\big(\pi_O^\star(L_\beta-\gamma C)+(\gamma-\beta)\sum_{i=1}^rC_i'\big).C'
+
2r\gamma^3 
\cr&\qq
+\beta\big(6\gamma
L_\beta'.C'-3\gamma^2C'^2-6r\gamma^2+6r\beta\gamma-2r\beta^2\big)\cr
& =-\gamma^2(2+\beta C'^2)-2\gamma^2\big(L_\beta-\gamma
C\big).C-2\gamma^2(\gamma-\beta)r+2r\gamma^3 \cr
&\qq+\beta\big(6\gamma
L_\beta'.C'-3\gamma^2C'^2-6r\gamma^2+6r\beta\gamma-2r\beta^2\big)
\cr
&=
-2\gamma^2-2\gamma^2\big(L_\beta-\gamma
C\big).C
+
\beta
\big(
6\gamma L_\beta'.C'-4\gamma^2C'^2-4r\gamma^2+6r\beta\gamma-2r\beta^2
\big),
\cr
\eaeq 
\eeq 
so by \eqref{SeshadriIneqsEq}, 
\beq 
\baeq\label{RestrictEq}
\mathrm{F}(\hat{\mathcal{X}}',\hat{\mathcal{L}}_{\beta,c}',\hat{\mathcal{D}}',\be)
<-2\gamma^2+\beta\big(6\gamma
L_\beta'.C'-4\gamma^2C'^2-4r\gamma^2+6r\beta\gamma-2r\beta^2\big).
\eaeq \eeq implying that 
$
\lim_{\beta\to0^+}
\mathrm{F}(\hat{\mathcal{X}}',\hat{\mathcal{L}}_{\beta,c}',\hat{\mathcal{D}}',\be)
\le -2\gamma^2<0$.
\end{proof}

\section{Further examples}
\label{ExampSec}

We close by illustrating the advantage of using
flop-slope stability over slope stability with two simple examples.

\subsection{$\FF_1$}
According to Ross--Thomas \cite[Examples 5.27,5.35]{RT1}
(cf. Panov--Ross \cite[Example 3.8]{PanovRoss}), $\FF_1$ is (for $\be=1$) slope destabilized by
the $-1$-curve. More generally, by Li--Sun \cite{LS}, the Futaki invariant
of the slope test configuration of the triple $(\FF_1,C,\be)$ with
$C$ smooth in $|-K_{\FF_1}|$ and with respect to the $-1$-curve
equals $-3c^2\beta-2c^3+3c^2+6c\beta$, which for $c=2\beta$ (the
Seshadri constant in this case), gives $4\beta^2(6-7\beta)$,
showing that there exists no KEE metric when $\be\in(6/7,1]$.
However, $\FF_1$ is {\it not}
destabilized by any fiber of its natural projection to $\PP^1$
\cite[Theorem 1.3]{PanovRoss}. We
now show that $\FF_1$ {\it is} destabilized by a fiber after one
flop, and this even holds for $\be\in(12/13,1]$. 

To show this, it
is most convenient to carry over the notation and assumptions of
\S\ref{section:flops}. Thus, we let $S$ be $\PP^2$, $C$ be a smooth
cubic, and $Z$ be a line. Then $S'=\FF_1$ is the blow-up of $S$ at a
point $O_1\in Z\cap C$,  $C'$ is an elliptic (anticanonical)
curve, and $Z'$ is a fiber of the natural projection $\FF_1\ra
\PP^1$. In addition $\mathcal D$ is $C'\times\PP^1$
and $\mathcal D'$ is its proper transform with respect to
the blow-up of $Z'\subset S'\times\PP^1$. 
Let $L_\beta':=-K_{S'}-(1-\beta)C'=\beta C'$, so
$\epsilon(S',L_\beta',Z')=\beta$. 
As $L_\be'.Z'=2\be$ and $Z'^2=0$, Proposition~\ref{proposition:formula} gives
\beq
\lb{slopeFutakiF1Eq}
\mathrm{F}(\mathcal{X}',\mathcal{L}_{\beta,c}',\mathcal{D}',\be)=6c\beta(2-c).
\eeq
Thus, if $c<\beta$, then
$\mathrm{F}({\mathcal{X}}',{\mathcal{L}}_{\beta,c}',{\mathcal{D}}')>0$.
On the other hand, we have
$$
\tau(S',L_\beta',Z')=\epsilon(S,L_\beta,Z)=3\beta.
$$
Thus, it follows from Lemma~\ref{lemma:flop-slop-ample} that
$\hat{\mathcal{L}}_\beta$ is ample for every $c\in (\beta,3\beta)$. By
Proposition~\ref{proposition:Futaki-flop-slop} and \eqref{slopeFutakiF1Eq}, 
\beq\baeq
\mathrm{F}(\hat{\mathcal{X}}',\hat{\mathcal{L}}_{\beta,c}',\hat{\mathcal{D}}',\be)
&=
\mathrm{F}(\mathcal{X}',\mathcal{L}_{\beta,c}',\mathcal{D}',\be)
-2(\beta-c)^3
-3(1-\beta)(\beta-c)^2
\cr 
&=6c\beta(2-c)-2(\beta-c)^3-3(1-\beta)(\beta-c)^2. \eaeq \eeq
If $c=3\beta$, then
$\mathrm{F}(\hat{\mathcal{X}}',\hat{\mathcal{L}}_{\beta,c}',\hat{\mathcal{D}}')=24\beta^2-26\beta^3$,
which implies that
$\mathrm{F}(\hat{\mathcal{X}}',\hat{\mathcal{L}}_{\beta,c}',\hat{\mathcal{D}}',\be)<0$
(for some $c\in(\beta,3\beta)$) provided that
$\beta>\frac{12}{13}$. 

In fact, one can show that $(\FF_1,C',\be)$ does not admit 
a KEE metric for  $\beta\in(\frac{4}{5},1]$
\cite{Gabor2012}. On the other hand, $(\FF_1,C',\be)$ admits 
a KEE metric for $\beta\in(0,\frac{3}{10})$, and, moreover,
if $C'$ is a \emph{general} curve in $|-K_{\FF_1}|$, then $(\FF_1,C',\be)$
admits a KEE metric for $\beta\in(0,\frac{3}{7})$ \cite[Corollary~1.16]{CJ}.

\subsection{$\h{\rm Bl}_{O_1,O_2}\PP^2$}

We take, as in the previous subsection, $S=\PP^2, C$ a smooth
cubic, and $Z$ a line, but now blow-up two points $O_1,O_2\in
C\cap Z$ to obtain $S'$, and let $C',Z'$ be the proper transforms
of $C,Z$, respectively. According to Panov--Ross \cite[Example 7.6]{PanovRoss}, 
the surface $S'$ (with
$\be=1$) is slope stable. We will show that it is {\it not } flop-slope
stable, and moreover this holds also for
$(S',C',\be)$ with  $\be\in(21/25,1]$. By
comparison, Sz\'ekelyhidi \cite{Gabor2012} constructed a
destabilizing toric degeneration for $\beta\in(\frac{7}{9},1]$ in
the case when $C'$ does not contain either of the points $Z'\cap
C_1'$ or $Z'\cap C_2'$, where $C_i'$ are the exceptional curves of
the blow-down map to $\PP^2$. It is interesting to note that the
value $21/25$ also arises in the related smooth continuity method
\cite[Proposition 10]{Szek2011},\cite[Example 2]{Li2012}.

By Proposition~\ref{proposition:formula}, we have
$$
\mathrm{F}(\mathcal{X}', \mathcal{L}_{\beta,c}',\mathcal{D}',\be)=3\beta c(2-c)-c^2(2c-3).%
$$
Here $c<\epsilon(S',L_\be',Z')=\beta$.
 Thus, $\mathrm{F}(\mathcal{X}',
\mathcal{L}_\beta',\mathcal{D}',\be)>0$ for every $c\in(0,\beta)$
(i.e., slope stable). On the
other hand, we have
$$
\tau(S',L_\beta',Z')=\epsilon(S,L_\beta,Z)=3\beta.
$$
By
Lemma~\ref{lemma:flop-slop-ample}, the divisor
$\hat{\mathcal{L}'}_\beta$ is ample for
$c\in (\be,3\be)$. 
Note that $C_i'.Z'=1$ and as in \eqref{intersectionKVCEq}
(see also \eqref{AppintersectionEq})
$K_{\mathcal{X}'}.C_i'=0$. Therefore,
$$
\mathcal{L}_\beta'.C_i=-\beta
K_{S'}.C_i'-cZ'.C_i'=\beta-cZ'.C_i'=\beta-c,
$$
and by
Proposition~\ref{proposition:Futaki-flop-slop}, one has
$$
\mathrm{F}(\hat{\mathcal{X}}',\hat{\mathcal{L}}_\beta',\hat{\mathcal{D}}',\be)=\mathrm{F}(\mathcal{X}',
\mathcal{L}_\beta',\mathcal{D}',\be)-4(\beta-c)^3-6(\beta-c)^2(1-\beta).
$$
Plugging-in $c=3\beta$ yields
$$
\mathrm{F}(\hat{\mathcal{X}}',\hat{\mathcal{L}}_\beta',\hat{\mathcal{D}}',\be)
=9\beta^2(2-3\beta)-9\beta^2(6\beta-3)+
32\beta^3-24\beta^2(1-\beta)=\beta^2(21-25\beta)< 0,
$$
when $\beta> \frac{21}{25}$. 

Note that $(S',C',\be)$ admits a KEE metric  for  $\beta\in(0,\frac{3}{7})$,
and, moreover, if $C'$ does not
contain neither of the points $Z\cap C_1'$ and $Z\cap C_2'$,
then a KEE metric exists
for  $\beta\in(0,\frac{1}{2})$ \cite[Corollary~1.16]{CJ}.

\appendix
\section{Simple flops}
\label{appendix}

Let $V$ be a smooth projective variety, and let 
$$
C\subset V
$$ 
be a smooth
rational curve such that its normal bundle in $V$ is isomorphic to
$\mathcal{O}_{\mathbb{P}^1}(-1)\oplus
\mathcal{O}_{\mathbb{P}^1}(-1)$. Then there exists a commutative
diagram
$$
\xymatrix{
&\mskip80mu\mathrm{Bl}_CV=W=\mathrm{Bl}_{\hat C}\hat V\mskip80mu\ar@{->}[dl]_{\pi_C}\ar@{->}[dr]^{\pi_{\hat C}}&\\%
V\ar@{->}[dr]_{c_C}\ar@{-->}[rr]^{f}&&\hat V\ar@{->}[dl]^{c_{\hat C}}\\
&X&}
$$ %
such that the threefold $\hat V$ is smooth, the threefold $X$ has
an isolated ordinary double point, $\pi_C$ is a blow-up of the
curve $C$, $\pi_{\hat C}$ is the contraction of the
$\pi_C$-exceptional surface, let us call it
$E\cong\mathbb{P}^1\times\mathbb{P}^1$, to a smooth rational
curve, let us call it $\hat C$. We define the map $f$ by declaring
the diagram to be commutative. This defines $f$ as a birational
map away from $C$. It is important in this construction that
$\pi_C\not=\pi_{\hat C}$, so that the map $f$ is not an
isomorphism. Finally, $c_C$ and $c_{\hat C}$ are (small)
contractions of the curves $C$ and $\hat C$, respectively, to the
isolated ordinary double point of $X$.

\begin{remark}
\label{remark:Kulikov} The birational map $f\colon
V\dasharrow\hat{V}$ is called the simple flop of the curve $C$.
Sometimes it is called an Atiyah flop \cite{Atiyah}. 
Later it was explicitly
introduced by Kulikov in \cite[\S4.2]{Kulikov} as
\emph{perestroika~I}.
\end{remark}

Note that the normal bundle of $\hat C$ in $U$ is isomorphic to
$\mathcal{O}_{\mathbb{P}^1}(-1)\oplus
\mathcal{O}_{\mathbb{P}^1}(-1)$. As in \eqref{intersectionKVCEq}, 
\beq
\lb{AppintersectionEq}
K_{V}.C=0.
\eeq
In fact, another way to see this equality is by 
noting that
since $c_C$ is an isomorphism away from codimension $2$, then
$K_V\sim_\mathbb{Q} c_C^\star K_V$, and of course
$c_C^\star(K_X).C=0$ since $c_C$ contracts $C$.
Similarly, $K_{\hat
V}.\hat C=0$ by construction. Moreover, we have $E\vert_E$ is a
divisor on $E\cong\mathbb{P}^1\times\mathbb{P}^1$ of bi-degree
$(-1,-1)$. Furthermore, the morphism $c_C\circ \pi_C=c_{\hat
C}\circ \pi_{\hat C}$ is just the contraction of the surface $E$
to the the isolated ordinary double point of $X$, i.e. its inverse
map is the blow-up of this point.

\begin{remark}
\label{remark:projectivity} 
Note that in general $\hat V$ is not necessarily
projective. However, it is not hard to see that $\hat V$ is
projective in many cases, either by explicit construction or
by using log MMP. In all our applications, $\hat V$ is projective
by construction, see \S\ref{section:flops}.
\end{remark}

Given an irreducible reduced Weyl divisor $D$ on $V$, we denote by
$\hat D$ the unique divisor on $\hat V$ such that 
$$
\hat
D:=
\overline{f(D\setminus C)}.
$$ 
By linearity, we extend the same
notation to all $\mathbb{R}$-divisors on $V$.
The following formula may be known, but we provide
a proof since we were not able to find a reference for it.

\begin{lemma}
\label{lemma:flops-intersections} Let $H_i, i=1,2,3,$ be
$\mathbb{R}$-divisors on $V$.  Then,
$$
\hat H_1.\hat H_2.\hat H_3=H_1.H_2.H_3-(H_1.C)(H_2.C)(H_3.C).
$$
\end{lemma}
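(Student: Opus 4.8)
The plan is to push everything onto the common resolution $W=\mathrm{Bl}_CV=\mathrm{Bl}_{\hat C}\hat V$ and to exploit that both $\pi_C\colon W\to V$ and $\pi_{\hat C}\colon W\to\hat V$ are blow-ups of smooth rational curves sharing the \emph{same} exceptional divisor $E\cong\PP^1\times\PP^1$, the two contractions realizing its two rulings. Write $\ell$ for the fiber of $\pi_C|_E\colon E\to C$ and $\hat\ell$ for the fiber of $\pi_{\hat C}|_E\colon E\to\hat C$; these are the two rulings, so $\ell.\hat\ell=1$ and $\ell^2=\hat\ell^2=0$ on $E$. Since $\pi_{\hat C}$ is a birational morphism of smooth projective threefolds, I would first reduce the claim to an identity on $W$ by writing $\hat H_1.\hat H_2.\hat H_3=\pi_{\hat C}^\star\hat H_1.\pi_{\hat C}^\star\hat H_2.\pi_{\hat C}^\star\hat H_3$.

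The key step is the comparison formula
$$
\pi_{\hat C}^\star\hat H=\pi_C^\star H+(H.C)\,E .
$$
Both sides restrict to the same divisor away from $E$ (the proper transform of $H$), so they differ by a multiple $\lambda E$; to pin down $\lambda$ I would intersect with $\hat\ell$. On one hand $\pi_{\hat C}^\star\hat H.\hat\ell=\hat H.(\pi_{\hat C})_\star\hat\ell=0$, since $\hat\ell$ is contracted by $\pi_{\hat C}$. On the other hand $\pi_C^\star H.\hat\ell=H.(\pi_C)_\star\hat\ell=H.C$, because $\hat\ell$ maps isomorphically onto $C$ under $\pi_C$, while $E.\hat\ell=-1$ as $\hat\ell$ is a fiber of the blow-up $\pi_{\hat C}$ of $\hat C$. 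Solving $0=H.C-\lambda$ gives $\lambda=H.C$; the sign here is essential.

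With this in hand the lemma reduces to four intersection numbers on $W$. Setting $a_i:=H_i.C$ and expanding $\prod_{i=1}^3(\pi_C^\star H_i+a_iE)$, the pure term is $\pi_C^\star H_1.\pi_C^\star H_2.\pi_C^\star H_3=H_1.H_2.H_3$ by the projection formula. The remaining contributions are controlled by: $\pi_C^\star H_i.\pi_C^\star H_j.E=(H_i.H_j).(\pi_C)_\star E=0$, since $(\pi_C)_\star E=0$ on dimension grounds; $\pi_C^\star H_i.E^2=H_i.(\pi_C)_\star(E^2)=-(H_i.C)$, using $(\pi_C)_\star(E^2)=-C$ (which follows from $\mathcal{O}_E(E)$ restricting to degree $-1$ on $\ell$); and $E^3=2$, the self-intersection of the bidegree $(-1,-1)$ class $E|_E$ on $\PP^1\times\PP^1$, equivalently $-\deg\nu_{C/V}=2$. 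Collecting, the three $E^2$-terms contribute $-3a_1a_2a_3$, the $E^3$-term contributes $+2a_1a_2a_3$, and the linear-in-$E$ terms vanish, yielding $H_1.H_2.H_3-a_1a_2a_3$, as claimed.

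The main obstacle is the bookkeeping of signs and coefficients in the two structural inputs: the coefficient $\lambda=H.C$ in the comparison formula, and the numbers $\pi_C^\star H_i.E^2=-(H_i.C)$ and $E^3=2$. All of these hinge on $E\cong\PP^1\times\PP^1$ with $E|_E$ of bidegree $(-1,-1)$, i.e. on the hypothesis $\nu_{C/V}\cong\mathcal{O}_{\PP^1}(-1)\oplus\mathcal{O}_{\PP^1}(-1)$; once these are fixed, the final expansion is routine.
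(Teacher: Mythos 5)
Your proof is correct, and the sign-sensitive ingredients all check out: $\pi_{\hat C}^\star\hat H-\pi_C^\star H$ is supported on the irreducible divisor $E$; intersecting with the ruling $\hat\ell$ (contracted by $\pi_{\hat C}$, mapped isomorphically to $C$ by $\pi_C$, with $E.\hat\ell=-1$) does pin down $\pi_{\hat C}^\star\hat H=\pi_C^\star H+(H.C)E$; and the four intersection numbers $\pi_C^\star H_i.\pi_C^\star H_j.E=0$, $\pi_C^\star H_i.E^2=-H_i.C$, $E^3=2$ are exactly right, so the expansion gives $H_1.H_2.H_3+(-3+2)a_1a_2a_3$ as claimed. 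Your route is a streamlined variant of the paper's rather than a verbatim match. The paper never writes down your pullback comparison formula: it instead expresses the common proper transform ${\tilde H}_i$ on $W$ in two ways, ${\tilde H}_i\sim\pi_C^\star H_i-m_iE\sim\pi_{\hat C}^\star\hat H_i-\hat m_iE$ with undetermined multiplicities, determines the relations $\hat m_i=r_i+m_i$ and $\hat r_i=-r_i$ by restricting to $E\cong\PP^1\times\PP^1$ and comparing bidegrees of ${\tilde H}_i|_E$ with respect to the two rulings, and then expands ${\tilde H}_1.{\tilde H}_2.{\tilde H}_3$ twice, once in each presentation, equating the results so that the unknowns $m_i$ cancel. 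Your argument eliminates those unknowns at the outset: the paper's relation $\hat m_i=r_i+m_i$ is precisely your identity $\pi_{\hat C}^\star\hat H_i=\pi_C^\star H_i+r_iE$, obtained more directly by intersecting with the curve $\hat\ell$ instead of by bidegree bookkeeping, and you then need only a single trinomial expansion. What you give up is that the paper's version makes explicit the sign flip $\hat H_i.\hat C=-H_i.C$ under the flop, which is implicit in your write-up (it follows from your comparison formula by intersecting with the other ruling $\ell$); what you gain is a shorter computation with no auxiliary multiplicities and a conceptually transparent statement of how the two pullbacks differ.
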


\begin{proof}
Let ${\tilde H}_1$, ${\tilde H}_2$ and ${\tilde H}_3$ be the proper transforms of the
divisors $H_1$, $H_2$ and $H_3$ on $W$, respectively. Recall
that
$E=\mathbb{P}^1\times\mathbb{P}^1$ denotes the exceptional divisor
of $\pi_C$ (and of $\pi_{\hat C}$). 
Then,
$$
\left\{\aligned%
&{\tilde H}_1\sim_{\mathbb{R}} c_C^{\star}H_1-m_1E\sim_{\mathbb{R}} c_{\hat C}^{\star}\hat H_1-\hat m_1E,\\
&{\tilde H}_2\sim_{\mathbb{R}} c_C^{\star}H_2-m_2E\sim_{\mathbb{R}} c_{\hat C}^{\star}\hat H_2-\hat m_2E,\\
&{\tilde H}_3\sim_{\mathbb{R}} c_C^{\star}H_3-m_3E\sim_{\mathbb{R}} c_{\hat C}^{\star}\hat H_3-\hat m_3E,\\
\endaligned
\right.
$$
for some real numbers $m_i, \hat m_i$. 
Put 
$$
r_i:=H_i.C, \q 
\hat r_i:=\hat H_i.\hat C.
$$
Then each ${\tilde H}_i|_{E}$ is a divisor (in
$\mathbb{P}^1\times\mathbb{P}^1$) of bi-degree
$$
(r_i+m_i, m_i)=(\hat m_i,\hat r_i+\hat m_i);
$$
this is because $E|_E=N_{E|V}$ is a line bundle of bi-degree
$(-1,-1)$, while since $c_C(E)=C$ and $c_{\hat C}(E)=\hat C$,
\beq
\lb{firstfiberEq}
c_C^{\star}H_i|_E=H_i.c_C(E)\times\hbox{(fiber of projection of
$\pi_C$)}
=r_i\times\hbox{(bi-degree (1,0) curve)},
\eeq
and
$$c_{\hat C}^{\star}\hat H_i|_E
=
\hat H_i.c_{\hat C}(E)\times\hbox{(fiber of projection of
$\pi_{\hat C}$)}
=\hat r_i\times\hbox{(bi-degree (0,1) curve)}.
$$
Thus,
\begin{equation}\label{equation:MiNiEq} \hat m_i=r_i+m_i,
\ \hbox{ and\ } \hat r_i=-r_i. 
\end{equation} 
Now, $E^3=E|_E.E|_E=
c_1(\mathcal{O}_{\PP^1}(-1)\oplus \mathcal{O}_{\PP^1}(-1))^2=2$, 
and by \eqref{firstfiberEq},
$$
c_C^{\star}H_i.E^2=c_C^{\star}H_i|_E.E=-H_i|_E.C=-r_i.
$$
On the other hand,
$$
c_C^{\star}H_i.c_C^{\star}H_j.E=
c_C^{\star}H_i|_E.c_C^{\star}H_j|_E=0
$$
since by \eqref{firstfiberEq}, 
$c_C^{\star}H_i|_E$ and $c_C^{\star}H_j|_E$ are fibers of the same projection in $\PP^1\times \PP^1=E$.
Altogether,
$$
\begin{aligned}
{\tilde H}_1.{\tilde H}_2.{\tilde H}_3
&=
\big(c_C^{\star}H_1-m_1E\big)
.\big(c_C^{\star}H_2-m_2E\big)
.\big(c_C^{\star}H_3-m_3E\big)\\
&=H_1.H_2.H_3+\big(m_1m_2c_C^{\star}H_3+m_1m_3c_C^{\star}H_2
+m_2m_3c_C^{\star}H_1\big).E^2-m_1m_2m_3E^3
\\
&=H_1.H_2.H_3-\big(m_1m_2r_3+m_1m_3r_2+m_2m_3r_1\big)-2m_1m_2m_3.
\end{aligned}
$$
Similarly,
$$
\begin{aligned} {\tilde H}_1.{\tilde H}_2.{\tilde H}_3
&=\big(c_{\hat C}^{\star}\hat H_1-\hat m_1E\big).
\big(
c_{\hat C}^{\star}\hat H_2-\hat m_2E
\big).
\big(
c_{\hat C}^{\star}\hat H_3-\hat m_3E
\big)
\\
&=\hat H_1.\hat H_2.\hat H_3+\big(\hat m_1\hat m_2c_{\hat C}^{\star}\hat H_3+\hat m_1\hat m_3c_{\hat C}^{\star}\hat H_2+\hat m_2\hat m_3c_{\hat C}^{\star}\hat H_1\big).E^2-\hat m_1\hat m_2\hat m_3E^3\\
&=\hat H_1.\hat H_2.\hat H_3
-
\big(
\hat m_1\hat m_2\hat r_3
+
\hat m_1\hat m_3\hat r_2
+
\hat m_2\hat m_3\hat r_1
\big)-2\hat m_1\hat m_2\hat m_3.
\end{aligned}
$$
Thus,
$$
\begin{aligned}
& H_1.H_2.H_3-m_1m_2r_1-m_1m_3r_2-m_2m_3r_1-2m_1m_2m_3 \cr &\qquad
=\hat H_1.\hat H_2.\hat H_3-\hat m_1\hat m_2\hat r_3-\hat m_1\hat
m_3\hat r_2-\hat m_2\hat m_3\hat r_1-2\hat m_1\hat m_2\hat m_3.
\end{aligned}
$$
By \eqref{equation:MiNiEq}, this yields $\hat H_1.\hat H_2.\hat
H_3=H_1.H_2.H_3-r_1r_2r_3$.
\end{proof}

\end{document}